\documentclass[twoside, 12pt]{amsart}
\usepackage[letterpaper,hmargin=1.25in,vmargin=1.3in]{geometry}
\usepackage{amscd}
\usepackage{verbatim}
\usepackage{color}
\usepackage{comment}

\input xy
\xyoption{all}

\usepackage{amssymb}
\usepackage{hyperref}

\DeclareMathAlphabet{\cat}{OT1}{cmss}{m}{sl}

\newtheorem*{theorem*}{Theorem}
\newtheorem{theorem}{Theorem}[section]

\newtheorem{proposition}[theorem]{Proposition}
\newtheorem{lemma}[theorem]{Lemma}
\newtheorem{corollary}[theorem]{Corollary}

\theoremstyle{definition}
\newtheorem{remark}[theorem]{Remark}



\newcommand{\tens}{\otimes}

\newcommand{\gmu}{\boldsymbol{\mu}}



\newcommand{\op}{^{\mathrm{op}}}


\newcommand{\Ker}{\operatorname{Ker}}

\newcommand{\ind}{\operatorname{\hspace{0.3mm}ind}}

\newcommand{\Inv}{\operatorname{Inv}}

\newcommand{\dec}{\operatorname{dec}}
\newcommand{\disc}{\operatorname{disc}}

\newcommand{\Br}{\operatorname{Br}}

\newcommand{\SB}{\operatorname{SB}}

\newcommand{\gPGL}{\operatorname{\mathbf{PGL}}}
\newcommand{\gSp}{\operatorname{\mathbf{Sp}}}
\newcommand{\gPGSp}{\operatorname{\mathbf{PGSp}}}
\newcommand{\gPGO}{\operatorname{\mathbf{PGO}}}
\newcommand{\gGSp}{\operatorname{\mathbf{GSp}}}
\newcommand{\gSL}{\operatorname{\mathbf{SL}}}
\newcommand{\gO}{\operatorname{\mathbf{O}}}
\newcommand{\gGamma}{\operatorname{\mathbf{\Gamma}}}
\newcommand{\gGL}{\operatorname{\mathbf{GL}}}
\newcommand{\gm}{\operatorname{\mathbb{G}}_m}

\newcommand{\gPGSP}{\operatorname{\mathbf{PGSp}}}
\newcommand{\gSpin}{\operatorname{\mathbf{Spin}}}
\newcommand{\gHSpin}{\operatorname{\mathbf{HSpin}}}
\newcommand{\End}{\operatorname{End}}

\newcommand{\nr}{\operatorname{nr}}

\newcommand{\Nrd}{\operatorname{Nrd}}

\newcommand{\tors}{\operatorname{-\cat{torsors}}}

\newcommand{\Dec}{\operatorname{Dec}}

\newcommand{\norm}{\operatorname{norm}}
\newcommand{\red}{\operatorname{red}}



\newcommand{\Z}{\mathbb{Z}}

\newcommand{\QZ}{\mathop{\mathbb{Q}/\mathbb{Z}}}




\title[Degree three invariants for semisimple groups of types $B$, $C$, and $D$] 
{Degree three invariants for semisimple groups of types $B$, $C$, and $D$}

\author
[S.~Baek] {Sanghoon Baek}

\address[Sanghoon Baek]{Department of Mathematical Sciences, 
	KAIST,
	291 Daehak-ro, Yuseong-gu,
	Daejeon 305-701,
	Republic of Korea}
\email{sanghoonbaek@kaist.ac.kr}
\urladdr{http://mathsci.kaist.ac.kr/~sbaek/}

\begin{document}
	
\begin{abstract}
We determine the group of reductive cohomological degree $3$ invariants of all split semisimple groups of types $B$, $C$, and $D$. We also present a complete description of the cohomological invariants. As an application, we show that the group of degree $3$ unramified cohomology of the classifying space $BG$ is trivial for all split semisimple groups $G$ of types $B$, $C$, and $D$.
\end{abstract}

\maketitle

\section{Introduction}

A degree $d$ \emph{cohomological invariant} of an algebraic group $G$ defined over a field $F$ is a natural transformation of functors
\[G\tors\to H^{d} \]
on the category of field extensions over $F$, where the functor $G\tors$ takes a field $K/F$ to the set $G\tors(K)$ of isomorphism classes of $G$-torsors over $K$ and the functor $H^{d}$ takes $K$ to the Galois cohomology $H^{d}(K)=H^{d}(K, \QZ(d-1))$. All degree $d$ invariants of $G$ form a group $\Inv^{d}(G)$. This notion was introduced by Serre, and since then it has been intensively studied by Merkurjev and Rost for $d=3$ \cite{GMS, Mer163}.

In this paper, we study degree $3$ cohomological invariants of split semisimple groups of Dynkin types $B$, $C$, and $D$. Thus from now on we shall focus on degree $3$ invariants. Let $G$ be a split reductive group over a field $F$. An invariant in $\Inv^{3}(G)$ is called \emph{normalized} if it vanishes on trivial $G$-torsors. Such invariants form a subgroup $\Inv^{3}(G)_{\norm}$ of $\Inv^{3}(G)$, thus $\Inv^{3}(G)=\Inv^{3}(G)_{\norm}\oplus H^{3}(F)$. A normalized invariant in $\Inv^{3}(G)_{\norm}$ is called \emph{decomposable} if it is given by a cup product of a degree $2$ invariant with a constant invariant of degree $1$. The subgroup of decomposable invariants of degree $3$ is denoted by $\Inv^{3}(G)_{\dec}$. The quotient group $\Inv^{3}(G)_{\norm}/\Inv^{3}(G)_{\dec}$ is called the group of \emph{indecomposable} invariants and is denoted by $\Inv^{3}(G)_{\ind}$. This group has been completely determined for all split simple groups in \cite{GMS}, \cite{Mer163}, \cite{BR} and for some semisimple groups in \cite{Mer161}, \cite{Baek}, \cite{BRZ}, and \cite{Me160}.

Let $G$ be a split semisimple group over $F$. A \emph{strict reductive envelope} of $G$ is a split reductive group $G_{\red}$ over $F$ such that the derived subgroup of $G_{\red}$ is $G$ and the center of $G_{\red}$ is a torus. Then, by \cite[\S10]{Mer162} the restriction map $$\Inv^{3}(G_{\red})_{\ind}\to \Inv^{3}(G)_{\ind}$$ is injective and its image is independent of the choice of a strict reductive envelope  $G_{\red}$. This image is called the subgroup of \emph{reductive indecomposable} invariants of $G$ and is denoted by $\Inv^{3}(G)_{\red}$. Recently, this subgroup has been completely computed for all split simple groups in \cite{LM} and for all split semisimple groups of type $A$ in \cite{Mer161}.

In the present paper, we determine the group of reductive indecomposable invariants of all split semisimple groups of types $B$, $C$, and $D$, which completes the cohomological invariants of classical groups. In particular, if each component of the corresponding root system of type $B$ (respectively, type $C$) has rank at least $2$ (respectively, even rank), then the group of indecomposable invariants is also determined as follows (see Theorem \ref{mainthm}, Theorem \ref{mainthmC}, Theorem \ref{mainthmD}, and Corollary \ref{coromain}):
\begin{theorem}\label{mainintro}
	Let $G$ be an arbitrary split semisimple group of one of the following types$:$ $B$, $C$, and $D$, i.e., $G=(\prod_{i=1}^{m}\gSpin_{2n_{i}+1})/\gmu$ $(n_{i}\geq 1)$, $(\prod_{i=1}^{m}\gSp_{2n_{i}})/\gmu$ $(n_{i}\geq 1)$, and $(\prod_{i=1}^{m}\gSpin_{2n_{i}})/\gmu$ $(n_{i}\geq 3)$ respectively for some central subgroup $\gmu$ and $m\geq 1$. Let $R$ be the subgroup of $Z$ whose quotient is the character group $\gmu^{*}$, where
	\begin{equation*}
		Z:=\bigoplus_{i=1}^{m}Z_{i},\, Z_{i}=\begin{cases} 
			(\Z/2\Z)e_{i} & \text{ if } G \text{ is of type } B \text{ or } C,\\
			(\Z/4\Z)e_{i} & \text{ if } G \text{ is of type } D, n_{i} \text{ odd},\\
			(\Z/2\Z)e_{i,1}\bigoplus (\Z/2\Z)e_{i,2} & \text{ if } G \text{ is of type } D, n_{i} \text{ even,}
		\end{cases}
	\end{equation*}
	denotes the character group of the center of the corresponding simply connected semisimple group.
	
	\noindent$(1)$ Assume that $G$ is of type $B$. Let $l=\dim R$. Then, \[\Inv^{3}(G)_{\red}=(\Z/2\Z)^{l-l_{1}-l_{2}}, \]
		where $l_{1}=\dim \langle e_{i}\in R\,|\, n_{i}\leq 2\rangle$, $l_{2}=\dim\langle e_{i}+e_{j}\in R\,|\, e_{i}, e_{j}\not\in R,\, n_{i}=n_{j}=1\rangle$. In particular, if $n_{i}\geq 2$ for all $1\leq i\leq m$, then
		\[ \Inv^{3}(G)_{\ind}=\Inv^{3}(G)_{\red}=(\Z/2\Z)^{l-l_{1}}.\]

\noindent$(2)$ Assume that $G$ is of type $C$. Let $s$ denote the number of ranks $n_{i}$ divisible by $4$ and $l=\dim \big(R\cap (\bigoplus_{4 \nmid n_{i}}Z_{i})\big)$. Then, 
	\[\Inv^{3}(G)_{\red}=(\Z/2\Z)^{s+l-l_{1}-l_{2}},\]
	where $l_{1}=\dim\langle e_{i}\in R\rangle$ and $l_{2}=\dim\langle e_{i}+e_{j}\in R\,|\, e_{i}, e_{j}\not\in R,\, n_{i}\equiv n_{j}\equiv 1\mod 2\rangle$. In particular, if $n_{i}\equiv 0 \mod 2$ for all $i$, then 
	\[ \Inv^{3}(G)_{\ind}=\Inv^{3}(G)_{\red}=(\Z/2\Z)^{s+l-l_{1}}.\]

\noindent$(3)$ Assume that $G$ is of type $D$. Let \[\bar{R}=\{(\bar{r}_{1},\ldots, \bar{r}_{m})\in \bigoplus_{i=1}^{m}(\Z/2\Z)\bar{e}_{i}\,|\, \sum_{i=1}^{m}r_{i}\in R\}, \text{ where } r_{i}=\begin{cases} 2\bar{r}_{i}e_{i} & \text{ if } n_{i} \text{ odd,}\\ \bar{r}_{i}e_{i,1}+\bar{r}_{i}e_{i,2} & \text{ if } n_{i} \text{ even},\end{cases}\] $R_{1,i}^{}=R\cap Z_{i}$ for odd $n_{i}$, and $R'_{1,i}=R\cap Z_{i}$ for even $n_{i}$. Set
	\begin{align*}
		&R'=\bar{R}\cap \big(\bigoplus_{4\nmid n_{i}, R_{1,i}', R_{1,i}\neq Z_{i}}(\Z/2\Z)\bar{e}_{i}\big) \text{ with } l=\dim R', \, I_{1}=\{i\,|\, Z_{i}=R_{1,i} \text{ or } R_{1,i}', n_{i}\neq 3\},\\
		&I_{2}=\{i \,|\,R_{1,i}'=0,  4|n_{i}\}\cup \{i\,|\, R_{1,i}'=(\Z/2\Z)e_{i,1} \text{ or } (\Z/2\Z)e_{i,2}, n_{i}\geq 6, 4|n_{i}\} \text{ with } s_{i}=|I_{i}|.
	\end{align*}
Then, we have
	\[ \Inv^{3}(G)_{\red}=(\Z/2\Z)^{s_{1}+s_{2}+l-l_{1}-l_{2}},\,\, \text{where}\]
$l_{1}=|\{i \,\,|\,\, 4\nmid n_{i}, R_{1,i}=2Z_{i} \text{ or } R_{1,i}'=(\Z/2\Z)(e_{i,1}+e_{i,2})\}|$, $l_{2}=\dim\langle \bar{e}_{i}+\bar{e}_{j}\,|\, R_{1,i}=R_{1,j}=0,\, 2e_{i}+2e_{j}\in R \rangle$.
	
\end{theorem}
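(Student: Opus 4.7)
The plan is to combine the cited reduction to $\Inv^{3}(G_{\red})_{\ind}$ for a strict reductive envelope $G_{\red}$ with Merkurjev's description of the degree $3$ indecomposable invariants of a split reductive group in terms of $W$-invariant quadratic forms on the character lattice of a maximal torus, modulo an explicit decomposable subgroup. I would first fix a convenient strict reductive envelope of $G = \tilde{G}/\gmu$, where $\tilde{G} = \prod_{i=1}^{m} G_{i}^{\mathrm{sc}}$ is the simply connected cover. Since $G_{\red}$ has a central torus, its maximal-torus character lattice decomposes as the weight lattice of $\tilde{G}$ plus a lattice of additional torus characters; the independence statement from \cite{Mer162} ensures this choice is harmless.

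Second, I would exploit the product structure. The Weyl group factors as $W = \prod W_{i}$ acting block-diagonally, so a $W$-invariant quadratic form splits as a direct sum of $W_{i}$-invariant forms on each simple factor plus cross terms pairing the simple factors with the central torus characters. The $W_{i}$-invariant quadratic forms on the weight lattice of a single $G_{i}^{\mathrm{sc}}$, modulo decomposable, are classical and underlie the known indecomposable invariants of $\gSpin_{2n_{i}+1}$, $\gSp_{2n_{i}}$, and $\gSpin_{2n_{i}}$. These contributions vanish in low rank (e.g.\ $n_{i} \leq 2$ in type $B$), accounting for the correction $l_{1}$.

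Third, I would impose the condition making $G_{\red}$ a strict reductive envelope of $G$ specifically: the bilinear form associated to a $W$-invariant quadratic form must take integer values on the lattice $R \subset Z$, where $Z/R = \gmu^{*}$. A generator $e_{i} \in R$ then kills the individual indecomposable invariant of the $i$-th simple factor, while a combination $e_{i}+e_{j} \in R$ with $e_{i}, e_{j} \notin R$ produces a new mixed indecomposable class and accounts for $l_{2}$. For type $C$ the extra contribution $s$ for $4 \mid n_{i}$ arises from torus characters divisible by the Dynkin index of $\gSp_{2n_{i}}$. For type $D$ the two-dimensional centers $(\Z/2\Z)^{2}$ for even $n_{i}$ and the $\Z/4\Z$ centers for odd $n_{i}$ interact with $R$ through the refined sets $\bar{R}$, $R'$, $I_{1}$, $I_{2}$, giving the exponent $s_{1}+s_{2}+l-l_{1}-l_{2}$.

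The main obstacle will be type $D$. Half-spin versus spin, the parity dependence of the center, and the further split by $4 \mid n_{i}$ force a delicate case analysis: for each factor I will have to track whether $R \cap Z_{i}$ is zero, cyclic of order two, or the full center, and match these patterns against the allowed sublattices of $W$-invariant quadratic forms modulo decomposable. The low-rank hypothesis $n_{i} \geq 3$ in type $D$ also needs separate justification to exclude the exceptional isomorphisms $D_{2} \simeq A_{1} \times A_{1}$ and $D_{3} \simeq A_{3}$, which would otherwise disrupt the quadratic-form calculation.
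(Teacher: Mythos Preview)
Your outline is in the right spirit but step~3 misidentifies the mechanism. You write that ``the bilinear form associated to a $W$-invariant quadratic form must take integer values on the lattice $R \subset Z$'', but $R$ is a subgroup of the \emph{finite} group $Z$, not a sublattice of a weight lattice; one cannot formulate an integrality condition on a quadratic form there. The paper does not compute $Q(G_{\red})/\Dec(G_{\red})$ for a chosen envelope. Instead it computes $Q(G)$ and $\Dec(G)$ for the semisimple $G$ itself, by writing down an explicit $\Z$-basis of $T^{*}$ adapted to $R$ (Section~\ref{QG}) and by evaluating $c_{2}(\rho(\lambda))$ on Weyl orbits (Propositions~\ref{decG}, \ref{decGC}, \ref{decGD}). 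The reductive subgroup of $Q(G)/\Dec(G)$ is then isolated via Laackman--Merkurjev \cite[Proposition~7.1]{LM}: the class of $q=\sum_{i}d_{i}q_{i}$ is reductive if and only if the order of each fundamental weight $\bar w_{i,j}$ in $\Lambda/T^{*}$ divides $\theta_{i,j}d_{i}$, where $\theta_{i,j}$ is the squared coroot length. This divisibility criterion, not any condition ``on $R$'', is the missing ingredient.

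Your reading of $l_{1}$ and $l_{2}$ is also inverted. The condition $e_{i}\in R$ means the character $e_{i}$ is trivial on $\gmu$, so the $i$-th factor of $G$ remains simply connected; nothing is ``killed''. Rather, each $r\in R$ gives an invariant $\boldsymbol{\mathrm{e}}_{3}(\phi[r])$ (Proposition~\ref{formcor}), and this invariant vanishes when $r=e_{i}$ with $n_{i}\leq 2$, or when $r=e_{i}+e_{j}$ with $n_{i}=n_{j}=1$, because a form of dimension at most $6$ in $I^{3}$ is hyperbolic. Hence $l_{1}$ and $l_{2}$ count \emph{trivial} invariants and are subtracted; they are not ``new mixed indecomposable classes''. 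Likewise in type~$C$, the summand $s$ does not arise from ``torus characters divisible by the Dynkin index'' but from the Garibaldi--Parimala--Tignol invariant $\Delta_{i}$ of $\gPGSp_{2n_{i}}$, which exists precisely when $4\mid n_{i}$. Without the Laackman--Merkurjev criterion and the correct interpretation of $R$, your plan does not yet reach the stated formulas.
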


For each type of $B$, $C$, and $D$, our main theorem can be restated as follows (see Propositions \ref{formcor}, \ref{formcorC}, \ref{formcorD}): Assume that $F$ is an algebraically closed field. For type $B$, let $G_{\red}=(\prod_{i=1}^{m}\gGamma_{2n_{i}+1})/\gmu$, where $\gGamma_{2n_{i}+1}$ is the split even Clifford group \cite[\S 23]{KMRT} and let $$R\to \Inv^{3}(G_{\red})_{\norm}$$ be the homomorphism given by $r\mapsto \boldsymbol{\mathrm{e}}_{3}(\phi[r])$, where $\phi[r]$ is the quadratic form defined in Remark \ref{remarkforsimple} and $\boldsymbol{\mathrm{e}}_{3}$ denotes the Arason invariant. Then, this morphism is surjective and its kernel is the subspace $$\langle e_{i},\, e_{j}+e_{k}\in R\,|\, e_{j}, e_{k}\not\in R,\, n_{i}\leq 2,\, n_{j}=n_{k}=1\rangle.$$

For type $C$, let $G_{\red}=(\prod_{i=1}^{m}\gGSp_{2n_{i}})/\gmu$, where $\gGSp_{2n_{i}}$ is the group of symplectic similitudes \cite[\S 12]{KMRT} and let $$\bigoplus_{4\,| n_{i}}(\Z/2\Z) e_{i}\bigoplus\big(R\cap (\bigoplus_{4\nmid n_{i}}(\Z/2\Z) e_{i})\big)\to \Inv^{3}(G_{\red})_{\norm}$$ be the homomorphism given by $e_{i}\mapsto \Delta_{i}$ for $i$ such that $4 | n_{i}$ and $r\mapsto \boldsymbol{\mathrm{e}}_{3}(\phi[r])$ for $r\in R\cap (\bigoplus_{4\nmid n_{i}}(\Z/2\Z) e_{i})$, where $\phi[r]$ is the quadratic form defined in (\ref{phiR}) and $\Delta_{i}$ is the invariant in (\ref{GPTinvarintpro}) induced by the Garibaldi-Parimala-Tignol invariant \cite{GPT}. Then, this morphism is surjective and its kernel is given by
$$\langle e_{i},\, e_{j}+e_{k}\in R\,|\, e_{j}, e_{k}\not\in R,\, n_{j}\equiv n_{k}\equiv 1\mod 2\rangle.$$

For type $D$, let $G_{\red}=(\prod_{i=1}^{m}\operatorname{\mathbf{\Omega}}_{2n_{i}})/\gmu$, where $\operatorname{\mathbf{\Omega}}_{2n_{i}}$ is the extended Clifford group \cite[\S 13]{KMRT} and let $$\bigoplus_{i\in I_{1}\cup I_{2}}(\Z/2\Z) \bar{e}_{i}\bigoplus R'\to \Inv^{3}(G_{\red})_{\norm}$$ be the homomorphism given by $\bar{e}_{i}\mapsto \boldsymbol{\mathrm{e}}_{3, i}$ for $i\in I_{1}$, $\bar{e}_{i}\mapsto \Delta_{i}'$ for $i\in I_{2}$, and $r\mapsto \boldsymbol{\mathrm{e}}_{3}(\phi[r])$ for $r\in R'$, where $\boldsymbol{\mathrm{e}}_{3, i}$ denotes the invariant in (\ref{arasonethree}) induced by the Arason invariant, $\Delta_{i}'$ denotes the invariant in (\ref{merpgo}) given by the invariant of $\gPGO^{+}_{2n_{i}}$ (see \cite[Theorem 4.7]{Mer163}), and $\phi[r]$ is the quadratic form defined in (\ref{phiR}). Then, the morphism is surjective, and its kernel is given by
$$\langle \bar{e}_{i},\, \bar{e}_{j}+\bar{e}_{k}\in R'\,|\, \bar{e}_{j}, \bar{e}_{k}\not\in R',\, n_{j}\equiv n_{k}\equiv 1\mod 2\rangle.$$

Therefore, our main result (Theorem \ref{mainintro}) tells us that for all split semisimple groups of types $B$, $C$, $D$ there are essentially two types of degree three reductive invariants given by the Arason invariant $\boldsymbol{\mathrm{e}}_{3}$ and the Garibaldi-Parimala-Tignol invariant $\Delta_{i}$ (and its analogue $\Delta_{i}'$) and no other invariants exist.

An invariant $\alpha\in \Inv^{3}(G)$ is said to be \emph{unramified} if for any field extension $K/F$ and any element $\eta\in G\tors(K)$, its value $\alpha(\eta)$ is contained in $H^{3}_{\nr}(K)$, where $H^{3}_{\nr}(K)$ denotes the subgroup in $H^{3}(K)$ of all unramified elements defined by
\[H^{3}_{\nr}(K)=\bigcap_{v}\Ker\big(\,\partial_{v}:H^{3}(K)\to H^{2}(F(v))\, \big)  \]
for all discrete valuations $v$ on $K/F$ and their residue homomorphisms $\partial_{v}$. The subgroup of all unramified invariant in $\Inv^{3}(G)$ will be denoted by $\Inv^{3}_{\nr}(G)$. By a theorem of Rost, we have an isomorphism
\begin{equation}\label{Rostisomrphismm}
\Inv^{3}_{\nr}(G)\simeq H^{3}_{\nr}(F(BG)),
\end{equation}
where $BG$ is the classifying space of $G$ (see \cite{Mer162}, \cite{Totaro}).

A generalized version of Noether's problem asks whether the classifying space $BG$ of an algebraic group $G$ is stably rational or retract rational (see \cite{CS}, \cite{Mer17prime}). A way of detecting non-retract rationality is to use unramified cohomology as the following statement: the classifying space $BG$ is not retract rational if there exists a non-constant unramified invariant of degree $d$ for some $d$ \cite{Mer17prime}. In fact, Saltman gave the first counter example over an algebraically closed field to the original Noether's question by providing certain finite groups which have a non-constant unramified invariant of degree $2$  \cite{Sal}. However, the generalized Noether's problem is still open for a connected algebraic group over an algebraically closed field.

In \cite{Bog}, Bogomolov showed that connected groups have no nontrivial degree $2$ unramified invariants, i.e., $\Inv^{2}_{\nr}(G)=0$ for a connected group $G$. In \cite{Sal1} and \cite{Sal2}, Saltman showed that the group $\Inv^{3}_{\nr}(\gPGL_{n})$ is trivial. Recently, Merkurjev has shown that the group $\Inv^{3}_{\nr}(G)$ is trivial if $G$ is a split simple group \cite{Mer162} or a split semisimple group of type $A$ \cite{Mer17} over an algebraically field $F$ of characteristic $0$.

Using the main theorem above we determine the group of unramified invariants of a split semisimple groups of types $B$, $C$, and $D$ (see Theorems \ref{secthm}, \ref{secthmC}, \ref{secthmD}).
\begin{theorem}
	Let $G=(\prod_{i=1}^{m}\gSpin_{2n_{i}+1})/\gmu$ $(n_{i}\geq 1)$ or $(\prod_{i=1}^{m}\gSp_{2n_{i}})/\gmu$ $(n_{i}\geq 1)$ or $(\prod_{i=1}^{m}\gSpin_{2n_{i}})/\gmu$ $(n_{i}\geq 3)$ defined over an algebraically closed field $F$ of characteristic $0$, $m\geq 1$, where $\gmu$ is an arbitrary central subgroup. Then, there are no nontrivial unramified degree $3$ invariants for $G$, i.e., $\Inv^{3}_{\nr}(G)=H^{3}_{\nr}(F(BG))=0$.
\end{theorem}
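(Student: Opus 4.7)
The plan combines the Rost isomorphism (\ref{Rostisomrphismm}) with the explicit classification of generators from Theorem \ref{mainintro} and the restatements in Propositions \ref{formcor}, \ref{formcorC}, \ref{formcorD}. Over an algebraically closed field $F$ of characteristic zero one has $H^{d}(F)=0$ for $d\geq 1$, so constant and decomposable invariants vanish, and every element of $\Inv^{3}(G)$ is automatically normalized and indecomposable. Consequently it suffices to check that no nonzero $\Z/2$-linear combination of the explicit generators is unramified. First I would pass to a strict reductive envelope $G_{\red}$ of $G$; since the listed generators already lie in $\Inv^{3}(G_{\red})_{\ind}$ and residue maps commute with restriction along $G\hookrightarrow G_{\red}$, the claim for $G$ reduces to $\Inv^{3}_{\nr}(G_{\red})=0$.

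Next I would compute residues generator by generator on suitable versal torsors. For the Arason-type generators $\boldsymbol{\mathrm{e}}_{3}(\phi[r])$ and $\boldsymbol{\mathrm{e}}_{3,i}$, specialize the torsor parameters over $K=F((t_{1}))((t_{2}))((t_{3}))$ so that the associated quadratic form $\phi[r]$ (resp.\ the spinor form attached to the $i$th factor) contains a generic $3$-fold Pfister form $\langle\!\langle t_{1},t_{2},t_{3}\rangle\!\rangle$; the Arason invariant then has residue $\{t_{1},t_{2}\}\neq 0$ at the $t_{3}$-adic valuation. For the Garibaldi-Parimala-Tignol invariants $\Delta_{i}$ and their orthogonal analogues $\Delta_{i}'$, use the explicit cup-product expressions from \cite{GPT} and \cite[Thm.~4.7]{Mer163} and construct a similitude torsor over $K$ whose Tits class, multiplier, and discriminant realize three independent generic symbols, again producing a nonzero residue of the form $\{t_{1},t_{2}\}$.

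Finally, to rule out cancellation across generators I would assemble a single versal $G_{\red}$-torsor over a multivariable Laurent-series field $K=F((s_{1}))\cdots ((s_{N}))$ in which each distinct generator ramifies along a distinct divisor $s_{j}=0$, so that the residues land in linearly independent summands of $\bigoplus_{j}H^{2}(F(v_{s_{j}}))$. An unramified combination then forces every coefficient to vanish, giving $\Inv^{3}_{\nr}(G)=0$ and hence $H^{3}_{\nr}(F(BG))=0$ by (\ref{Rostisomrphismm}). The main obstacle is this independence step: for mixed generators such as $\boldsymbol{\mathrm{e}}_{3}(\phi[e_{i}+e_{j}])$ whose associated quadratic form couples two spinor factors, one must choose parameters so that their residues are not linearly dependent on the residues of the pure-factor generators $\boldsymbol{\mathrm{e}}_{3,i}$, $\boldsymbol{\mathrm{e}}_{3,j}$ or on those of the $\Delta$-type contributions. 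This will likely require a case-by-case analysis across types $B$, $C$, and $D$, mirroring and extending the residue manipulations Merkurjev uses for split simple groups in \cite{Mer162}.
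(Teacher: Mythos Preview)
Your overall architecture---pass to $G_{\red}$, classify generators via Propositions \ref{formcor}, \ref{formcorC}, \ref{formcorD}, then test residues---matches the paper's. The genuine gap is the ``independence step'' you flag yourself. Building a single torsor over an iterated Laurent-series field in which the Arason-type generators $\boldsymbol{\mathrm{e}}_{3}(\phi[r])$ ramify along pairwise distinct divisors is not straightforward, because the forms $\phi[r]$ for different $r\in R$ share the same underlying components $\phi_{i}$. Any specialization of the $\phi_{i}$ that forces one $\phi[r]$ to contain a generic Pfister summand will simultaneously perturb $\phi[r']$ for every $r'$ whose support overlaps that of $r$, so the residues cannot in general be decoupled by a diagonal choice of parameters. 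The case-by-case analysis you anticipate would have to confront this coupling directly, and you give no mechanism for doing so.

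The paper sidesteps this obstacle rather than solving it. For types $B$ and $C$ it does \emph{not} attempt to separate the $\boldsymbol{\mathrm{e}}_{3}(\phi[r])$ by divisors; instead it uses the standard embeddings $\gGamma_{3}\hookrightarrow \gGamma_{2n_{i}+1}$ (resp.\ $\gGSp_{2}\hookrightarrow \gGSp_{2n_{i}}$) to restrict $\Inv^{3}(G_{\red})$ to the rank-one group $G'_{\red}=(\gGamma_{3})^{m}/\gmu$ (resp.\ $(\gGSp_{2})^{m}/\gmu$), and then invokes the exceptional isomorphism $A_{1}=B_{1}=C_{1}$ to import Merkurjev's result $\Inv^{3}_{\nr}(G'_{\red})=0$ from \cite[Lemma 4.3]{Mer17}. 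The only invariants not controlled by this restriction are those in the kernel of $\Inv^{3}(G_{\red})\to\Inv^{3}(G'_{\red})$, and by Proposition \ref{formcor} these are exactly the combinations supported on $R_{1}$- or $R_{2}$-summands of sufficiently large rank; Lemmas \ref{lemmaunrami} and \ref{lemmaunramiC} dispose of them with a single Laurent variable, since here one \emph{can} isolate a summand. For type $D$ (Theorem \ref{secthmD}) the argument is closer to your sketch for the $\boldsymbol{\mathrm{e}}_{3,i}$ and $\Delta'_{i}$ terms, but for the coupled invariants $\boldsymbol{\mathrm{e}}_{3}(\phi[r])$ the paper again reduces---via $A_{3}=D_{3}$ and function fields of products of Severi--Brauer varieties---to the residue computation already carried out in \cite[Proposition 4.1]{Mer17}. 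The idea your proposal is missing is precisely this reduction to the type-$A$ case.
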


This paper is organized as follows. In Section $2$ we recall some basic definitions and facts used in the rest of the paper. Sections $3$-$5$ are devoted to the computation of the group of degree $3$ invariants of a split semisimple group $G$ of types $B$, $C$, and $D$. In the last section, we present a description of the degree $3$ invariants of $G$ and a proof of the second main result.

\paragraph{\bf Acknowledgements.} 
I am grateful to Alexander Merkurjev for careful reading and numerous suggestions. I am also grateful to Jean-Pierre Tignol for helpful discussion. This work has been supported by National Research Foundation of Korea (NRF) funded by the Ministry of Science, ICT and Future Planning (2016R1C1B2010037).

\section{Cohomological invariants of degree $3$}\label{CIThree}

In this section we recall some basic notions concerning degree $3$ invariants following \cite{GMS, Mer163}. We shall frequently use these in the following sections.

\subsection{Invariant quadratic forms}\label{subuiqf} Let $\tilde{G}$ be a split semisimple simply connected group of Dynkin type $\mathcal{D}$, i.e., $\tilde{G}=G_{1}\times \cdots \times G_{m}$ for some integer $m\geq 1$, where each $G_{i}$ is a split simple simply connected group of type $\mathcal{D}$. Consider the natural action of the Weyl group $W=W_{1}\times \cdots W_{m}$ of $\tilde{G}$ on the weight lattice $\Lambda=\Lambda_{1}\oplus \cdots \oplus \Lambda_{m}$, where $W_{i}$ (resp. $\Lambda_{i}$) is the Weyl group (resp. the weight lattice) of $G_{i}$. Then, the group of $W$-invariant quadratic forms $S^{2}(\Lambda)^{W}$ on $\Lambda$, denoted by $Q(\tilde{G})$, is a sum of cyclic groups
\[Q(\tilde{G})=\Z q_{1}\oplus \cdots \oplus \Z q_{m},\]
where $q_{i}$ is the normalized Killing form of $G_{i}$ for $1\leq i\leq m$.

Consider an arbitrary split semisimple group $G$ of Dynkin type $\mathcal{D}$, i.e., $G=\tilde{G}/\gmu$, where $\gmu$ is a central subgroup. Let $T$ be a split maximal torus of $G$ and let $T^{*}$ be the group of characters of $T$. Then, the subgroup $Q(G)$ of $W$-invariant quadratic forms on $T^{*}$ is given by
\begin{equation}\label{QGarbi}
Q(G)=S^{2}(T^{*})\cap Q(\tilde{G}).
\end{equation}

\subsection{Degree $3$ invariants} Consider the Chern class map $c_{2}:\Z[T^{*}]\to S^{2}(T^{*})$ defined by $c_{2}(\sum_{i}e^{\lambda_{i}})=\sum_{i<j}\lambda_{i}\lambda_{j}$ \cite[\S 3c]{Mer163}, where $\Z[T^{*}]$ is the group ring of the maximal torus $T$ in Section \ref{subuiqf} and $\lambda_{i}\in T^{*}$. Since $(T^{*})^{W}=0$, the restriction of $c_{2}$ induces a group homomorphism
\begin{equation}\label{ctwodec}
c_{2}:\Z[T^{*}]^{W}\to Q(G)
\end{equation}
We shall write $\Dec(G)$ for the image of $c_{2}$ in (\ref{ctwodec}). For $\lambda\in T^{*}$, we denote by $\rho(\lambda)=\sum_{\chi\in W(\lambda)} e^{\chi}$, where $W(\lambda)$ is the $W$-orbit of $\lambda$. Then, the subgroup $\Dec(G)$ is generated by $c_{2}(\rho(\lambda))=-\tfrac{1}{2}\sum_{\chi\in W(\lambda)} \chi^{2}$. By \cite[Theorem 3.9]{Mer163}, the indecomposable invariants of $G$ is determined by the following exact sequence 
\[0\to \Inv^{3}(G)_{\dec}\to  \Inv^{3}(G)_{\norm}\to Q(G)/\Dec(G)\to 0.\]
In particular, if $F$ is algebraically closed, then we have $\Inv^{3}(G)_{\norm}=Q(G)/\Dec(G)$.

\section{The group $Q(G)$ for semisimple groups $G$ of types $B$, $C$, $D$}\label{QG}

In the present section, we shall compute the group $Q(G)$ for types $B$, $C$, and $D$. 

\subsection{Type $B$}

Let $G=(\prod_{i=1}^{m}\gSpin_{2n_{i}+1})/\gmu$ be an (arbitrary) split semisimple group of type $B$, $m, n_{i}\geq 1$, where $\gmu\simeq (\gmu_{2})^{k}$ is a central subgroup for some $k\geq 0$. Let $T$ be the split maximal torus of $G$ (i.e., $T=(\gm^{\sum n_{i}})/\gmu$) and let 
\begin{equation}\label{relationB}
R=\{r=(r_{1},\ldots, r_{m})\in \bigoplus_{i=1}^{m}(\Z/2\Z) e_{i}\,|\, f_{p}(r)=0, 1\leq p\leq k   \}
\end{equation}
be the subgroup of $\bigoplus_{i=1}^{m}(\Z/2\Z) e_{i}$ whose quotient is the character group $\gmu^{*}$ for some linear polynomials $f_{p}\in \Z/2\Z[t_{1},\ldots, t_{m}]$. We shall simply write $(\Z/2\Z)^{m}$ for $\bigoplus_{i=1}^{m}(\Z/2\Z) e_{i}$. Consider the following commutative diagram of exact sequences
\begin{equation}\label{Tdiagram}
\xymatrix{
	0 \ar@{->}[r] & R \ar@{->}[r] & (\Z/2\Z)^{m} \ar@{->}[r]  & \gmu^{*}  \ar@{=}[d]\ar@{->}[r] & 0\\
	0 \ar@{->}[r] & T^{*} \ar@{->>}[u]\ar@{->}[r]  & \prod_{i=1}^{m}\Z^{n_{i}}\ar@{->>}[u]\ar@{->}[r]& \gmu^{*} \ar@{->}[r] & 0\\
}
\end{equation}
where $T^{*}$ is the corresponding character group and the middle map $\prod_{i=1}^{m}\Z^{n_{i}}\to (\Z/2\Z)^{m}$ is given by 
\begin{equation}\label{middlevB}
\sum a_{i,j}w_{i,j}\mapsto (\bar{a}_{1,n_{1}}, \ldots, \bar{a}_{m,n_{m}}) 
\end{equation}
for $1\leq i\leq m$ and $1\leq j\leq n_{i}$, where $w_{i,j}$ denote the fundamental weights for the $i$th component of the root system of $G$. For the rest of this subsection, we simply write $a_{i}$ and $w_{i}$ for $a_{i,n_{i}}$ and $w_{i,n_{i}}$, respectively. Then, it follows from (\ref{Tdiagram}) that
\begin{equation*}
T^{*}=\{\sum a_{i,j}w_{i,j}\,|\, f_{p}(a_{1},\ldots, a_{m})\equiv 0 \mod 2    \}.
\end{equation*}

Let $I=\{1,\ldots, m\}$ and let $I_{1}=\{i\in I\,|\, f_{p}(e_{i})=0,\, 1\leq p\leq k \}$, where $\{e_{1}, \ldots, e_{m}\}$ denotes the standard basis of $\Z^{m}$. We write the relations $f_{p}(a_{1},\ldots, a_{m})\equiv 0 \mod 2$ as
\begin{equation}\label{choicemtxB}
(a_{i_{1}},\ldots, a_{i_{k}})^{T}=B\cdot (a_{j_{1}},\ldots, a_{j_{l}})^{T}+(2c_{1},\ldots, 2c_{k})^{T}
\end{equation}
for some distinct $i_{1},\ldots, i_{k}, j_{1},\ldots, j_{l}$ such that $\{i_{1},\ldots, i_{k}, j_{1},\ldots, j_{l}\}=I\backslash I_{1}$ and some $k\times l$ binary matrix $B=(b_{ij})$ (i.e., $b_{ij}=0$ or $1$) with $c_{p}\in \Z$. Then, we have
\[\sum a_{i,j}w_{i,j}=\!\sum_{1\leq i\leq m, 1\leq j\leq n_{i}-1} a_{i,j}w_{i,j}+\sum_{i\in I_{1}}a_{i}w_{i}+\sum_{p=1}^{k} 2c_{p}w_{i_{p}}+\sum_{s=1}^{l} a_{j_{s}}(w_{j_{s}}+g_{s})  \]
where $g_{s}=(w_{i_{1}},\ldots, w_{i_{k}})\cdot B_{s}$ and $B_{s}$ is the $s$-th column of $B$, thus we obtain the following $\Z$-basis of $T^{*}$:
\begin{equation}\label{Tbasis}
\{w_{i,j}\}_{1\leq i\leq m, 1\leq j\leq n_{i}-1}\cup \{w_{i}\}_{i\in I_{1}}\cup \{2w_{i_{p}}\}_{1\leq p\leq k}\cup \{w_{j_{s}}+g_{s}\}_{1\leq s\leq l}. 
\end{equation}

Let $v_{p}=2w_{i_{p}}$ and $h_{p}(t_{1},\ldots, t_{l})=b_{p1}t_{1}+\cdots +b_{pl}t_{l}\in \Z/2\Z[t_{1},\ldots, t_{l}]$ for $1\leq p\leq k$. Since the group $Q(\tilde{G})$ is generated by the normalized Killing forms
\[q_{i}=\begin{cases}
2w_{i}^{2}-2w_{i,n_{i}-1}w_{i}-\sum_{j=1}^{n_{i}-2}w_{i,j}w_{i,j+1}+\sum_{j=1}^{n_{i}-1}w_{i,j}^{2} & \text{ if } n_{i}\geq 1,\\
w_{i}^{2} & \text{ if } n_{i}=1
\end{cases}\]
for all $1\leq i\leq m$, any element of $Q(G)$ is of the form $q=\sum_{i=1}^{m}d_{i}q_{i}$ for some $d_{i}\in \Z$. Therefore, with respect to the basis (\ref{Tbasis}) we have
\[q=q'+\tfrac{1}{4}\sum_{p=1}^{k} v_{p}^{2}[\delta_{i_{p}}d_{i_{p}}+h_{p}(\delta_{j_{1}}d_{j_{1}}, \ldots, \delta_{j_{l}}d_{j_{l}})]+\tfrac{1}{2}\sum_{1\leq i<j\leq k} v_{i}v_{j}h_{i}(\delta_{j_{1}}d_{j_{1}}b_{j_{1}}, \ldots, \delta_{j_{l}}d_{j_{l}}b_{j_{l}}) \]
for some quadratic form $q'$ with integer coefficients, where 
\[\delta_{i}=\begin{cases}
2 & \text{ if } n_{i}\geq 2 \text{ with }i\in I\backslash I_{1},\\
1 & \text{ if } n_{i}=1\text{ with }i\in I\backslash I_{1}.
\end{cases}
\]
Hence, by (\ref{QGarbi}) we obtain $q=\sum_{i=1}^{m}d_{i}q_{i}\in Q(G)$ if and only if
\begin{equation}\label{firsteq}
\delta_{i_{p}}d_{i_{p}}+h_{p}(\delta_{j_{1}}d_{j_{1}}, \ldots, \delta_{j_{l}}d_{j_{l}})\equiv 0 \mod 4
\end{equation}
and
\begin{equation}\label{secondeq}
h_{p}(\delta_{j_{1}}d_{j_{1}}b_{j_{1}}, \ldots, \delta_{j_{l}}d_{j_{l}}b_{j_{l}})\equiv 0 \mod 2
\end{equation}
for all $1\leq p\leq k$. In particular, since two systems of equations $\{f_{p}(t_{1},\ldots, t_{m})\}$ and $\{t_{i_{p}}+h_{p}(t_{j_{1}},\ldots, t_{j_{l}})\}$ are equivalent we replace the condition (\ref{firsteq}) by
\begin{equation}\label{firsteqprime}
f_{p}(\delta_{1}d_{1},\ldots, \delta_{m}d_{m})\equiv 0 \mod 4,
\end{equation}
where we set $\delta_{i}=2$ for $i\in I_{1}$.  

Equivalently, we can compute $Q(G)$ with respect to a basis of $R$ as follows. Let 
\begin{equation}\label{RoneRtwo}
R_{1}=\langle e_{i}\,|\, e_{i}\in R\rangle \text{ and } R_{2}=\langle e_{i}+e_{j}\,|\, e_{i}+e_{j}\in R, e_{i}, e_{j}\not\in R_{1}\rangle
\end{equation}
be the subspaces of $R$. We first choose $\{w_{i}\}_{i\in I_{1}}$ as a part of basis of $T^{*}$. Then, for the remaining part of a basis of $T^{*}$ we write a given basis of $R$ as
\begin{equation}\label{Rtwo}
(e_{j_{1}},\ldots, e_{j_{l}})^{T}=C(e_{i_{1}},\ldots, e_{i_{k}})^{T}
\end{equation}
for some $i_{1},\ldots, i_{k}, j_{1},\ldots, j_{l}$ with $\{i_{1},\ldots, i_{k}, j_{1},\ldots, j_{l}\}=I\backslash I_{1}$ and some $l\times k$ binary matrix $C$ such that all basis elements of the form $e_{i}+e_{j}$ in $R_{2}$ is a part of (\ref{Rtwo}). Then, we have the same $\Z$-basis of $T^{*}$ as in (\ref{Tbasis}) by replacing $g_{s}$ in (\ref{Tbasis}) with $g_{s}=C_{s}\cdot (w_{i_{1}}, \ldots, w_{i_{k}})$, where $C_{s}$ is the $s$-th row of $C$. The rest of the computation is the same as in the previous one. 

In particular, if either $R=R_{1}\oplus R_{2}$ or $n_{i}\geq 2$ for all $1\leq i\leq m$, then the condition (\ref{secondeq}) becomes trivial, thus

\begin{proposition}\label{QGprop}
Let $G=(\prod_{i=1}^{m}\gSpin_{2n_{i}+1})/\gmu$, $m, n_{i}\geq 1$, where $\gmu\simeq (\gmu_{2})^{k}$ is a central subgroup for some $k\geq 0$. Let $R=\{r\in (\Z/2\Z)^{m}\,|\, f_{p}(r)=0, 1\leq p\leq k \}$ be the subgroup of $(\gmu_{2}^{m})^{*}$ whose quotient is the character group $\gmu^{*}$ for some linear polynomials $f_{j}\in \Z/2\Z[t_{1},\ldots, t_{m}]$. Assume that either $n_{i}\geq 2$ for all $i$ or $R=R_{1}\oplus R_{2}$, where $R_{1}$ and $R_{2}$ are the subgroups of $R$ defined in (\ref{RoneRtwo}). Then, we have
\[Q(G)=\{\sum_{i=1}^{m}d_{i}q_{i}\,|\,f_{p}(\delta_{1}d_{1},\ldots, \delta_{m}d_{m})\equiv 0 \mod 4\}.\] 
\end{proposition}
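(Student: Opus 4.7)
The plan is to leverage the explicit computation already carried out in the paragraphs preceding the proposition: for $q=\sum_{i=1}^{m}d_{i}q_{i}$ expressed with respect to the basis (\ref{Tbasis}) of $T^{*}$, one isolates the coefficients of $v_{p}^{2}$ and of the cross products $v_{p}v_{p'}$ with $p\neq p'$. By (\ref{QGarbi}), $q$ lies in $Q(G)$ if and only if all these rational coefficients are integers, which is precisely the conjunction of (\ref{firsteqprime}) and (\ref{secondeq}). Since (\ref{firsteqprime}) is already the divisibility condition appearing in the statement of the proposition, the whole task reduces to showing that under either of the two hypotheses the condition (\ref{secondeq}) is automatically satisfied and imposes no further constraint on $(d_{1},\ldots,d_{m})$.

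In the first case, when $n_{i}\geq 2$ for every $i$, I would simply invoke the definition of $\delta_{i}$, giving $\delta_{i}=2$ for all $i\in I$. Consequently each argument $\delta_{j_{s}}d_{j_{s}}b_{j_{s}}$ fed into the linear form $h_{p}$ in (\ref{secondeq}) is even, and since $h_{p}$ has coefficients in $\{0,1\}$, its value is automatically even. Thus (\ref{secondeq}) holds trivially, with no restriction on the $d_{i}$.

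For the second case, when $R=R_{1}\oplus R_{2}$, I would switch to the alternative basis of $R$ outlined just before the proposition, in which $\{w_{i}\}_{i\in I_{1}}$ handles $R_{1}$ and the remaining $T^{*}$-basis is encoded by the matrix $C$ of (\ref{Rtwo}). By the very definition of $R_{2}$, this basis can be chosen to consist entirely of weight-$2$ vectors $e_{j_{s}}+e_{i_{p(s)}}$, which forces each row of $C$ to have exactly one nonzero entry $c_{s,p(s)}=1$. Redoing the computation with this $C$, the coefficient of $v_{p}v_{p'}$ in $q$ (for $p\neq p'$) becomes a multiple of $\sum_{s}c_{s,p}c_{s,p'}\delta_{j_{s}}d_{j_{s}}$, and the row-sparsity of $C$ forces every product $c_{s,p}c_{s,p'}$ to vanish. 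Hence the analogue of (\ref{secondeq}) is again automatically satisfied.

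I expect the main obstacle to lie in this second case: one must verify that under $R=R_{1}\oplus R_{2}$ a weight-$2$ basis of $R_{2}$ really exists, and then carefully propagate the sparsity of $C$ through the substitutions $w_{i_{p}}=v_{p}/2$ and $w_{j_{s}}=u_{s}-\sum_{p}c_{s,p}v_{p}/2$ to kill every $v_{p}v_{p'}$ cross coefficient. Once both cases are settled, the condition $q\in Q(G)$ reduces to (\ref{firsteqprime}) alone, which is exactly the stated description of $Q(G)$.
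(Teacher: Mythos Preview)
Your proposal is correct and follows essentially the same approach as the paper: the discussion preceding the proposition already reduces $q\in Q(G)$ to the conjunction of (\ref{firsteqprime}) and (\ref{secondeq}), and the paper simply asserts that under either hypothesis condition (\ref{secondeq}) becomes trivial. You have filled in exactly the details the paper leaves implicit---namely that $\delta_{j_s}=2$ forces (\ref{secondeq}) in the first case, and that the weight-$2$ basis of $R_2$ (which exists by a spanning-forest argument on the graph with edges $\{(i,j):e_i+e_j\in R_2\}$) makes each row of $C$ have a single nonzero entry, killing all $v_pv_{p'}$ cross terms in the second case.
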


\subsection{Type $C$}

Let $G=(\prod_{i=1}^{m}\gSp_{2n_{i}})/\gmu$ be a split semisimple group of type $C$, where $m, n_{i}\geq 1$ and $\gmu\simeq (\gmu_{2})^{k}$ is a central subgroup for some $k\geq 0$. Let $T$ be the split maximal torus of $G$ and let $R$ be the subgroup of $(\Z/2\Z)^{m}$ as in (\ref{relationB}). Then, we have the same commutative diagram (\ref{Tdiagram}), replacing the middle vertical map (\ref{middlevB}) by
\[\sum a_{i,j}e_{i,j}\mapsto (\sum_{j=1}^{n_{1}} \bar{a}_{1,j}, \ldots, \sum_{j=1}^{n_{m}} \bar{a}_{m,j}),  \]
where $e_{i,j}$ denote the standard basis for the $i$th component of $\prod_{i=1}^{m}\Z^{n_{i}}$. Then, by (\ref{Tdiagram}) we have
\begin{equation}\label{charTC}
T^{*}=\{\sum a_{i,j}e_{i,j}\,|\, f_{p}(\sum_{j=1}^{n_{1}} a_{1,j},\ldots, \sum_{j=1}^{n_{m}} a_{m,j})\equiv 0 \mod 2    \}.
\end{equation}

We simply write $e_{i}$ for $e_{i,1}$. Let $e'_{i,j}=e_{i,j}-e_{i}$ for all $1\leq i\leq m$ and $2\leq j\leq n_{i}$ and let $a_{i}=\sum_{j=1}^{n_{i}} a_{i,j}$. Then, we apply the same argument as in type $B$ so that we have the following $\Z$-basis of $T^{*}$
\begin{equation}\label{TbasisC}
\{e'_{i,j}\}_{1\leq i\leq m,\, 2\leq j\leq n_{i}}\cup \{e_{i}\}_{i\in I_{1}}\cup \{2e_{i_{p}}\}_{1\leq p\leq k}\cup \{e_{j_{s}}+g_{s}\}_{1\leq s\leq l}, 
\end{equation}
where $B$ is the binary matrix as in (\ref{choicemtxB}) and $g_{s}=(e_{i_{1}},\ldots, e_{i_{k}})\cdot B_{s}$.

Let $v_{p}=2e_{i_{p}}$ and let $h_{p}$ be the polynomial defined as in type $B$. Since the normalized Killing forms are given by 
\[q_{i}=e_{i,1}^{2}+\cdots+e_{i,n_{i}}^{2},  \]
for any $q\in Q(G)$ there exist $d_{i}\in \Z$ such that $q=\sum_{i=1}^{m}d_{i}q_{i}$, thus with respect to the basis (\ref{TbasisC}) we have
\[q=q'+\tfrac{1}{4}\sum_{p=1}^{k} v_{p}^{2}[n_{i_{p}}d_{i_{p}}+h_{p}(n_{j_{1}}d_{j_{1}}, \ldots, n_{j_{l}}d_{j_{l}})]+\tfrac{1}{2}\sum_{1\leq i<j\leq k} v_{i}v_{j}h_{i}(n_{j_{1}}d_{j_{1}}b_{j_{1}}, \ldots, n_{j_{l}}d_{j_{l}}b_{j_{l}}) \]
for some quadratic form $q'$ with integer coefficients. Therefore, by the same argument as in type $B$ we have $q=\sum_{i=1}^{m}d_{i}q_{i}\in Q(G)$ if and only if
\begin{equation}\label{firsteqtypeC}
h_{p}(n_{j_{1}}d_{j_{1}}b_{j_{1}}, \ldots, n_{j_{l}}d_{j_{l}}b_{j_{l}})\equiv 0\mod 2 \,\text{ and } f_{p}(\delta_{1}n_{1}d_{1},\ldots, \delta_{m}n_{m}d_{m})\equiv 0 \mod 4
\end{equation}
for all $1\leq p\leq k$, where 
\[\delta_{i}=\begin{cases}
1 & \text{ if } i\in I\backslash I_{1},\\
\tfrac{2}{\,\,n_{i}} & \text{ if } i\in I_{1}.
\end{cases}
\]

Similar to the case of type $B$, if $R=R_{1}\oplus R_{2}$ or $n_{i}$ is even for all $1\leq i\leq m$, then the first condition in (\ref{firsteqtypeC}) becomes obvious, thus
\begin{proposition}\label{QGpropC}
	Let $G=(\prod_{i=1}^{m}\gSp_{2n_{i}})/\gmu$, $m, n_{i}\geq 1$, where $\gmu\simeq (\gmu_{2})^{k}$ is a central subgroup for some $k\geq 0$. Let $R$, $R_{1}$, and $R_{2}$ be the groups as in $($\ref{relationB}$)$ and $($\ref{RoneRtwo}$)$. Assume that either $n_{i}$ is even for all $i$ or $R=R_{1}\oplus R_{2}$. Then, we have
	\[Q(G)=\{\sum_{i=1}^{m}d_{i}q_{i}\,|\,f_{p}(\delta_{1}n_{1}d_{1},\ldots, \delta_{m}n_{m}d_{m})\equiv 0 \mod 4\}.\] 
\end{proposition}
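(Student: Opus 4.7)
My plan is to build directly on the computation preceding the proposition: expanding $q=\sum d_iq_i\in Q(\tilde G)$ in the $\Z$-basis (\ref{TbasisC}) shows that $q\in Q(G)=S^2(T^*)\cap Q(\tilde G)$ if and only if the pair of congruences in (\ref{firsteqtypeC}) holds. I would therefore verify that, under either hypothesis of the proposition, the first of these congruences is satisfied automatically, so that the description of $Q(G)$ collapses to the single condition $f_p(\delta_1n_1d_1,\ldots,\delta_mn_md_m)\equiv 0\mod 4$, which is exactly the stated formula.

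When every $n_i$ is even, each $n_{j_s}$ appearing in $h_p(n_{j_1}d_{j_1}b_{j_1},\ldots,n_{j_l}d_{j_l}b_{j_l})$ is itself even, and the congruence modulo $2$ is immediate (every summand is already even, independently of the $d_i$'s). For the other case $R=R_1\oplus R_2$, I would instead switch to the alternative $C$-basis of $T^*$ described after (\ref{Tbasis}) for type $B$, applicable verbatim in type $C$, arranging that every non-$R_1$ element of the chosen basis of $R$ is a pair $e_a+e_b\in R_2$. Via (\ref{Rtwo}) each such pair gives a row of $C$ with exactly one nonzero entry, and under the transpose correspondence between the $B$- and $C$-formulations (each presentation solves for the same set of coordinates in terms of the same set of free ones) this forces every column of $B$ to contain at most one $1$. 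Hence $b_{p,s}b_{p',s}=0$ for all $s$ whenever $p\neq p'$, the coefficient of the cross term $v_pv_{p'}$ is already integral, and the first congruence holds trivially in this case as well.

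The delicate point is the $R=R_1\oplus R_2$ case: one must confirm that choosing a basis of $R$ composed of pairs from $R_2$ (together with the single generators of $R_1$) really does produce the single-nonzero-entry structure on columns of $B$, and that this structural property is exactly what kills the cross-term obstruction. Once both cases are disposed of, only the second congruence in (\ref{firsteqtypeC}) survives, giving the description of $Q(G)$ claimed in the proposition.
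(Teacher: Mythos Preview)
Your proposal is correct and follows essentially the same route as the paper. Both arguments reduce to showing that the first condition in (\ref{firsteqtypeC}) is automatic under either hypothesis: when all $n_i$ are even this is the direct parity observation you give, and when $R=R_1\oplus R_2$ the paper, like you, passes to the alternative $C$-basis of (\ref{Rtwo}) built from pair elements of $R_2$. Your explanation via the transpose relation $C=B^{T}$ and the resulting single-nonzero-entry columns of $B$ (forcing $b_{ps}b_{p's}=0$ for $p\neq p'$) just makes explicit the mechanism behind the paper's one-line remark that the condition ``becomes obvious''.
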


\subsection{Type $D$}\label{subD}

Let $G=(\prod_{i=1}^{m}\gSpin_{2n_{i}})/\gmu$ be a split semisimple group of type $D$, $m\geq 1$, $n_{i}\geq 3$, where $\gmu\simeq (\gmu_{2})^{k_{1}}\times (\gmu_{4})^{k_{2}}$ is a subgroup of the center $Z(\prod_{i=1}^{m}\gSpin_{2n_{i}})$ for some $k_{1}, k_{2}\geq 0$.  We shall denote the character group $Z(\prod_{i=1}^{m}\gSpin_{2n_{i}})^{*}$ by
\begin{equation}\label{centerD}
Z:=\bigoplus_{i=1}^{m}Z_{i}, \text{ where } Z_{i}=\begin{cases} 
(\Z/4\Z)e_{i} & \text{ if } n_{i} \text{ odd},\\
(\Z/2\Z)e_{i,1}\bigoplus (\Z/2\Z)e_{i,2} & \text{ if } n_{i} \text{ even}.
\end{cases}
\end{equation}

Let $T$ be the split maximal torus of $G$ and let
\[R=\{ r\in Z\,|\, f_{p}(r)=0, 1\leq p\leq k\}\]
be the subgroup of $Z$ such that $\gmu^{*}\simeq Z/R$ for some linear polynomials $f_{1},\ldots, f_{k}\in \Z/4\Z[T_{1},\ldots, T_{m}]$ with $k=k_{1}+k_{2}$, where $T_{i}$ denotes a $2$-tuple $(t_{i1}, t_{i2})$ of variables (resp. a variable $t_{i}$) if $n_{i}$ is even (resp. odd) and the coefficients of $t_{i1}$ and $t_{i2}$ in $f_{p}$ are either $0$ or $2$. Then, we have the same diagram (\ref{Tdiagram}), replacing the middle vertical map (\ref{middlevB}) by $\prod_{i=1}^{m}\Z^{n_{i}}\to Z$,
\begin{equation}\label{defaii}
\sum_{j=1}^{n_{i}} a_{i,j}w_{i,j}\mapsto A_{i}:=\begin{cases} \big(\overline{a_{i,n_{i}-1}-a_{i, n_{i}}+2S_{i}}\big)e_{i} & \text{ if } n_{i} \text{ odd},\\ \big(\overline{a_{i,n_{i}-1}+S_{i}}\big)e_{i1}+\big(\overline{a_{i,n_{i}}+S_{i}}\big)e_{i2}  & \text{ if } n_{i} \text{ even},\end{cases} 
\end{equation}
where $S_{i}=\sum_{j=1}^{[(n_{i}-1)/2]}a_{i,2j-1}$ and $w_{i,j}$ denote the fundamental weights for the $i$th component of the root system of $G$. Therefore, by (\ref{Tdiagram}) we have
\begin{equation}\label{charTD}
T^{*}=\{\sum a_{i,j}w_{i,j}\,|\, f_{p}(\sum_{i=1}^{m} A_{i})=0, 1\leq p\leq k\}.
\end{equation}

Let $I_{1}'=\{i\in I\,|\, f_{p}(e_{i})=0 \text{ or } f_{p}(e_{i,1})=f_{p}(e_{i,2})=0 \text{ for all } 1\leq p\leq k\}$ and $I'=I\backslash I_{1}'$. In view of the argument in the case of type $B$ we may assume that each relation $f_{p}(\sum_{i=1}^{m} A_{i})=0$ can be written as \[\delta_{p}a_{p}=b_{p}+4c_{p}, \text{ where } b_{p}=\begin{cases} \delta_{p}a_{p}+f_{p}(\sum_{i=1}^{m} A_{i}) & \text{ if }a_{p}=a_{i, n_{i}} \text{ with odd } n_{i}, \\ \delta_{p}a_{p}-f_{p}(\sum_{i=1}^{m} A_{i}) & \text{ otherwise,}\end{cases}\] for some distinct $a_{p}\in \{a_{i, n_{i}-1}, a_{i, n_{i}}\,|\, i\in I'\}$ with  $\delta_{p}\in \{1, 2\}$ and  $c_{p}\in \Z$ such that the terms $a_{1},\ldots, a_{k}$ do not appear in $b_{1},\ldots, b_{k}$ and each coefficient of $a_{i,l}$ in $b_{p}$ is divisible by $\delta_{p}$.

Let $W_{1}=\{w_{i, 2j-1}\,|\, i\in I', 1\leq j\leq [(n_{i}-1)/2]\}\cup \{w_{i, n_{i}-1}, w_{i, n_{i}}\,|\, i\in I'\}$. We simply write $w_{p}\in W_{1}$ for $w_{i, n_{i}-1}$ (resp. $w_{i, n_{i}}$) if $a_p=a_{i, n_{i}-1}$ (resp. $a_{p}=a_{i, n_{i}}$). Set 
\[g_{i, l}=s_{1}(i, l)w_{1}+\cdots +s_{k}(i, l)w_{k} \text{ and } W'=W_{1}\backslash \{w_{1},\ldots, w_{k}\},\]
where  $s_{p}(i,l)$ denotes the coefficient of $a_{i,l}$ in $b_{p}/\delta_{p}$. Then, we obtain the following $\Z$-basis of $T^{*}$:
\begin{equation}\label{TDbasis}
\{w_{i,j}\}_{i\in I_{1}, \forall j}\cup \{w_{i, 2j}\}_{i\in I', 1\leq j\leq [\tfrac{n_{i}-2}{2}]}\cup \{\tfrac{4\, }{\delta_{p}}w_{p}\}_{1\leq p\leq k}\cup \{w_{i,l}+g_{i,l}\}_{w_{i,l}\in W'}. 
\end{equation}

Let $v_{p}=\tfrac{4\,}{\delta_{p}}w_{p}$ and $v_{i,l}=w_{i,l}+g_{i,l}$. Assume that for each $p$, $w_{p}$ is a fundamental weight for the $i_{p}$-th component of the root system of $G$. As the normalized Killing forms are given by
\[q_{i}=\big(\sum_{j=1}^{n_{i}} w_{i,j}^{2}\big)-\big(w_{i,n_{i}-2}w_{i,n_{i}}+\sum_{j=1}^{n_{i}-2}w_{i,j}w_{i,j+1}\big),\]
for any $q\in Q(G)$ there exist $d_{i}\in \Z$ such that $q=\sum_{i=1}^{m}d_{i}q_{i}$. Hence, with respect to the basis (\ref{TDbasis}) we obtain
\begin{align*}
q=&q'+\tfrac{1}{16}\sum_{p=1}^{k}v_{p}^{2}\delta_{p}^{2}[d_{i_{p}}+\sum_{w_{i,l}\in W'}d_{i}s_{p}(i,l)^{2}]+\tfrac{1}{8}\sum_{1\leq p<u\leq k}v_{p}v_{u}\delta_{p}\delta_{u}[\sum_{w_{i,l}\in W'}d_{i}s_{p}(i,l)s_{u}(i,l)]\\
&-\tfrac{1}{2}\sum_{p=1}^{k}v_{p}\delta_{p}[\sum_{w_{i,l}\in W'}v_{il}d_{i}s_{p}(i,l)]  
\end{align*}
for some quadratic form $q'$ with integer coefficients. Hence, $q=\sum_{i=1}^{m}d_{i}q_{i}\in Q(G)$ if and only if
\begin{align}
&\delta_{p}^{2}[d_{i_{p}}+\sum_{w_{i,l}\in W'}d_{i}s_{p}(i,l)^{2}]\equiv 0 \mod 16, \sum_{w_{i,l}\in W'}d_{i}\delta_{p}\delta_{u}s_{p}(i,l)s_{u}(i,l)\equiv 0 \mod 8, \label{firsteqtypeD}\\  &\text{ and } d_{i}\delta_{p}s_{p}(i,l)\equiv 0 \mod 2 \label{firsteqtypeDprime}
\end{align}
for all $1\leq p\leq k$, $1\leq p<u\leq k$, and all $(i, l)$ such that $w_{i,l}\in W'$.

Let $c_{i,1}(p)$, $c_{i,2}(p)$, $c_{i}(p)$ denote the coefficients of $t_{i,1}, t_{i,2}, t_{i}$ in $f_{p}$, respectively. Note that $c_{i,1}(p)$ and $c_{i,2}(p)$ are either $0$ or $2$. Since
\begin{equation*}
\delta_{p}^{2}+\sum_{l}\delta_{p}^{2}s_{p}(i_{p},l)^{2}=\sum_{l}\delta_{p}^{2}s_{p}(i,l)^{2}=\begin{cases} 8 & \text{ if } c_{i}(p)=2 \text{ or } c_{i,1}(p)+c_{i,2}(p)=4,\\
	2n_{i} & \text{ if } c_{i}(p)=\pm 1 \text{ or } c_{i,1}(p)+c_{i,2}(p)=2\end{cases}
\end{equation*}
for all $p$ and $i\neq i_{p}$, where the sums range over all $l$ such that $w_{i,l}\in W'$, the first equation in (\ref{firsteqtypeD}) is equivalent to the following equation
\begin{equation}\label{equifirstD}
f_{p}(T_{1},\ldots, T_{m})\equiv 0 \mod 8, \text{ where } t_{i}=\begin{cases} \pm n_{i}d_{i} & \text{ if } c_{i}(p)=\pm 1,\\ 2d_{i} & \text{ if } c_{i}(p)=2,\end{cases} 
\end{equation}
\begin{equation*}
t_{i,1}=\begin{cases} \tfrac{n_{i}d_{i}}{2} &  \text{ if } c_{i,1}(p)=2, c_{i,2}(p)=0,\\ d_{i} & \text{ if }  c_{i,1}(p)+c_{i,2}(p)=4,\end{cases} \text{ and }\, t_{i,2}=\begin{cases} \tfrac{n_{i}d_{i}}{2} &  \text{ if } c_{i,1}(p)=0, c_{i,2}(p)=2,\\ d_{i} & \text{ if }  c_{i,1}(p)+c_{i,2}(p)=4\end{cases}
\end{equation*}
for all $i\in I'$ and we set $t_{i}=4d_{i}$, $t_{i1}=t_{i2}=2d_{i}$ for all $i\in I_{1}'$. Since we have
\begin{equation*}
\sum_{l}s_{p}(i,l)s_{u}(i,l)\equiv \begin{cases} \pm2n_{i} \mod 8 & \text{ if } c_{i}(p)c_{i}(u)\equiv \pm 1 \mod 4,\\  4\,\,\,\,\,\,\,\,\, \mod 8 & \text{  if } c_{i}(p)c_{i}(u)\equiv 2 \mod 4,\\   0\,\,\,\,\,\,\,\,\, \mod 8 & \text{ otherwise}\end{cases}
\end{equation*}
for all $1\leq p<u\leq k$ such that $\delta_{p}=\delta_{u}=1$, where the sum ranges over all $l$ such that $w_{i,l}\in W'$, the second equation in (\ref{firsteqtypeD}) is equivalent to 
\begin{equation}\label{secondconditionre}
\sum_{\{i\in I' | c_{i}(p)c_{i}(u)\equiv \pm 1 \mod 4 \}} 2d_{i}+\sum_{\{i\in I' | c_{i}(p)c_{i}(u)\equiv 2 \mod 4 \}}4d_{i}\equiv 0 \mod 8 
\end{equation}
if $\delta_{p}=\delta_{u}=1$ and
\[4\sum_{i\in I''}d_{i}\equiv 0 \mod 8\]
for some subset $I''$ of $I'$ otherwise.

\section{The subgroup $\Dec(G)$ for semisimple groups $G$ of types $B$, $C$, $D$}

In this section we will compute the subgroup $\Dec(G)$ of decomposable elements of $G$ for types $B$, $C$, and $D$. In this section we shall denote by $T$ and $T^{*}$ the maximal split torus of $G$ and its character group, respectively and we denote by $\Lambda$ and $\Lambda_{r}$ the weight lattice and the root lattice of $G$, respectively. The Weyl group of $G$ will be denoted by $W$.

\subsection{Type $B$}
Consider a split semisimple group $G=(\prod_{i=1}^{m}\gSpin_{2n_{i}+1})/\gmu$ of type $B$, where $m, n_{i}\geq 1$ and $\gmu\simeq (\gmu_{2})^{k}$ is a central subgroup for some $k\geq 0$. Let $I=\{1,\ldots, m\}$.

We first consider the case where $G$ is simply connected (i.e. $G=\tilde{G}$), equivalently $k=0$. Since 
\begin{equation}\label{decsum}
\Dec(G_{1}\times G_{2})=\Dec(G_{1})\times \Dec(G_{2})
\end{equation}
for any two semisimple groups $G_{1}$ and $G_{2}$, it suffices to compute $\Dec(\gSpin_{2n+1})$. Observe that $\Dec(\gSpin_{3})=\Z q$ as $c_{2}(\rho(w_{1}))=-q$ and $\Dec(\gSpin_{5})=\Z q$ as $c_{2}(\rho(w_{2}))=-q$. Similarly, $c_{2}(\rho(w_{1}))=-2q\in \Dec(\gSpin_{2n+1})$ for any $n\geq 2$. As the Weyl group of $\gSpin_{2n+1}$ contains a normal subgroup $(\Z/2\Z)^{n}$ generated by sign switching, we see that $2 \,|\,c_{2}(\rho(\lambda))$ for any $\lambda\in \Lambda$ (c.f. \cite[Part II, \S
13]{GMS}), thus $\Dec(\gSpin_{2n+1})=2\Z q$. Therefore, 
\begin{equation}\label{decsimply}
\Dec(\tilde{G})=\delta_{1}'\Z q_{1}\oplus \cdots \oplus\delta_{m}'\Z q_{m}, \text{ where } \delta_{i}'=\begin{cases}
2 & \text{ if } n_{i}\geq 3,\\
1 & \text{ if } n_{i}=1, 2.
\end{cases}
\end{equation}

Now we assume that $G$ is adjoint (i.e. $G=\bar{G}$), equivalently, $k=m$. Then, $\Dec(\gO^{+}_{3})=4\Z q$ as $c_{2}(\rho(2w_{1}))=-4q$. Similarly, by the same argument as in the simply connected case, we see that $\Dec(\gO^{+}_{2n+1})=2\Z q$ for $n\geq 2$ (see \cite[Theorem 4.5]{Mer163}). Hence, 
\begin{equation}\label{adjoint}
\Dec(\bar{G})=\delta_{1}''\Z q_{1}\oplus \cdots \oplus \delta_{m}''\Z q_{m}, \text{ where } \delta_{i}''=\begin{cases}
2 & \text{ if } n_{i}\geq 2,\\
4 & \text{ if } n_{i}=1.
\end{cases}
\end{equation}

In general, we show that the subgroup $\Dec(G)$ is determined by certain subgroups of $R$ introduced in Section \ref{QG}.
\begin{proposition}\label{decG}
Let $G=(\prod_{i=1}^{m}\gSpin_{2n_{i}+1})/\gmu$, $m, n_{i}\geq 1$, where $\gmu$ is a central subgroup. Let $R$ be the subgroup of $(\gmu_{2}^{m})^{*}=(\Z/2\Z)^{m}$ such that $\gmu^{*}=(\gmu_{2}^{m})^{*}/R$. Let $R_{1}'=\langle e_{i}\in R\,|\, n_{i}\leq 2\rangle$ and $R_{2}'=\langle e_{i}+e_{j}\in R\,|\, e_{i}, e_{j}\not\in R,\, n_{i}=n_{j}=1\rangle$ be two subspaces of $R$ with $\dim R_{1}'=l_{1}$ and $\dim R_{2}'=l_{2}$. Then,
\begin{equation}\label{propdec}
\Dec(G)=(\bigoplus_{e_{i}\in R_{1}'}\Z q_{i})\oplus (\bigoplus_{n_{i}\geq 2,\, e_{i}\not\in R_{1}'} 2\Z q_{i})\oplus (\bigoplus_{r=1}^{l_{2}} 2\Z q'_{r})\oplus (\bigoplus_{s=1}^{l_{3}} 4\Z q''_{s}),        
\end{equation}
where $l_{3}=m-l_{1}-l_{2}-|\{i \,|\, n_{i}\geq 2,\, e_{i}\not\in R_{1}'\}|$ and $q_{r}'$ $($resp. $q_{s}'')$ is of the form $q_{i}+q_{j}$  $($resp. $q_{i})$ for some $i, j$ such that $\langle q_{r}', q_{s}''\,|\, 1\leq r\leq l_{2}, 1\leq s\leq l_{3}\rangle=\langle q_{i}\,|\, n_{i}=1, e_{i}\not\in R_{1}'\rangle$ over $\Z$.
\end{proposition}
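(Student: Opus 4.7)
The plan is to prove both inclusions between $\Dec(G)$ and the right-hand side of (\ref{propdec}), exploiting the basis (\ref{Tbasis}) of $T^{*}$ and the block-wise behavior of $c_{2}\circ\rho$. For the inclusion $\supseteq$ I would exhibit an explicit $\lambda\in T^{*}$ realizing each claimed generator: (i) when $e_{i}\in R_{1}'$ (so $n_{i}\leq 2$), the fundamental weight $w_{i,n_{i}}$ lies in $T^{*}$ and a direct orbit computation gives $c_{2}(\rho(w_{i,n_{i}}))=-\delta_{i}'q_{i}=-q_{i}$; (ii) for $n_{i}\geq 2$ with $e_{i}\notin R_{1}'$, the weight $w_{i,1}$ lies in $T^{*}$ unconditionally and gives $-2q_{i}$; (iii) for $e_{i}+e_{j}\in R_{2}'$ (so $n_{i}=n_{j}=1$), the element $w_{i}+w_{j}\in T^{*}$ yields $c_{2}(\rho(w_{i}+w_{j}))=-2(q_{i}+q_{j})$; and (iv) for $i\in J:=\{i\,|\, n_{i}=1,\, e_{i}\notin R_{1}'\}$, the weight $2w_{i}$ always lies in $T^{*}$ and gives $-4q_{i}$. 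Together (iii) and (iv), once supplemented by a basis of $\bigoplus_{i\in J}\Z q_{i}$ compatible with the $R_{2}'$-relations, span the subspace $\bigoplus_{r}2\Z q_{r}'\oplus\bigoplus_{s}4\Z q_{s}''$.

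The key technical input for the computations in (iii) and the rest of the argument is the product rule
\[c_{2}\bigl(\rho_{1}(\lambda_{1})\rho_{2}(\lambda_{2})\bigr)=|W_{2}(\lambda_{2})|\cdot c_{2}(\rho_{1}(\lambda_{1}))+|W_{1}(\lambda_{1})|\cdot c_{2}(\rho_{2}(\lambda_{2})),\]
which holds because $c_{1}(\rho_{i}(\lambda_{i}))=\sum_{\chi\in W_{i}(\lambda_{i})}\chi$ is $W_{i}$-invariant and $(T^{*})^{W}=0$, and which extends inductively to multi-block elements. Combined with the homogeneity $c_{2}(\rho(k\lambda))=k^{2}c_{2}(\rho(\lambda))$ obtained by rescaling Chern roots, this suffices to justify all computations in (i)--(iv).

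For the inclusion $\subseteq$ I would use the containment $\Dec(G)\subseteq\Dec(\tilde{G})=\bigoplus\delta_{i}'\Z q_{i}$ from (\ref{decsimply}), so that only the $q_{i}$-coefficient for $n_{i}\leq 2$ and the cross-relations among $\{q_{i}\}_{i\in J}$ require scrutiny. For $n_{i}=2$ with $e_{i}\notin R$, any $\lambda\in T^{*}$ has $a_{i,n_{i}}$ even, so its block-$i$ projection lies in the root sublattice of the $i$th factor, and its $q_{i}$-contribution therefore lies in $\Dec(\bar{G})_{i}=2\Z q_{i}$ by (\ref{adjoint}). For $i\in J$, I would track the $q_{i}$-coefficient of $c_{2}(\rho(\lambda))$ by iterating the product rule; since the $B$-type Weyl group contains $(\Z/2\Z)^{n_{k}}$ of sign-flips, $|W_{k}(\lambda_{k})|$ is even whenever $\lambda_{k}\neq 0$. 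Hence, for $\lambda\in T^{*}$ whose block-$i$ projection is $b_{i}w_{i}$ with $b_{i}$ odd, the coefficient of $q_{i}$ in $c_{2}(\rho(\lambda))$ modulo $4\Z q_{i}$ can be nonzero only when the $R$-relation forcing $\bar b_{i}=1$ pairs $e_{i}$ with another element of $J$; any companion of rank $\geq 2$ contributes an extra factor of $2$ and lands in $4\Z q_{i}$. This is precisely the $R_{2}'$ configuration already captured by the generators $2(q_{i}+q_{j})$.

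The main obstacle I expect is completing the combinatorial bookkeeping in the last step: verifying rigorously that every odd $q_{i}$-contribution ($i\in J$) arising from $c_{2}\circ\rho$ on $T^{*}$ is accounted for by the generators $2(q_{i}+q_{j})$ and $4q_{i}$, including ruling out unexpected cancellations between contributions coming from different $\lambda$'s. This amounts to enumerating the tuples $(\bar b_{k})_{k}\in R$ with $\bar b_{i}=1$ and checking case-by-case how orbit-size factors from companion blocks distribute powers of two, but the analysis is finite and can be reduced to checking the generators of $R$ coming from the basis (\ref{Tbasis}).
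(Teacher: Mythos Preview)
Your approach is essentially the same as the paper's: both directions are proved by exhibiting explicit generators (your (i)--(iv) match the paper's use of (\ref{decctwo}), (\ref{decctwoplus}), and (\ref{decbetween})), and the reverse inclusion is handled by a case analysis on the number of nonzero blocks of $\lambda$. Your product rule
\[c_{2}\bigl(\rho_{1}(\lambda_{1})\rho_{2}(\lambda_{2})\bigr)=|W_{2}(\lambda_{2})|\cdot c_{2}(\rho_{1}(\lambda_{1}))+|W_{1}(\lambda_{1})|\cdot c_{2}(\rho_{2}(\lambda_{2}))\]
is a clean reformulation of what the paper obtains directly from the sign-switching action (the paper's formula (\ref{newtypeBctwo}) is exactly what your rule yields once you observe that $B_{n}$-orbits have size divisible by $2$ for $n=1$ and by $2n\geq 4$ for $n\geq 2$).

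There is, however, one incorrect step. You write: ``For $n_{i}=2$ with $e_{i}\notin R$, any $\lambda\in T^{*}$ has $a_{i,n_{i}}$ even, so its block-$i$ projection lies in the root sublattice.'' This is false: membership in $T^{*}$ only forces the \emph{full} vector $(\bar a_{1,n_{1}},\ldots,\bar a_{m,n_{m}})$ to lie in $R$, not each coordinate separately. If $e_{i}+e_{j}\in R$ for some $j$, then $\lambda=w_{i,2}+w_{j,n_{j}}\in T^{*}$ with $a_{i,n_{i}}=1$ odd. The conclusion you want (that the $q_{i}$-coefficient is even) is still correct, but the reason is your own product rule: when $t\geq 2$ the factor $N_{i}=\prod_{k\neq i}|W_{k}(\lambda_{k})|$ is even, and when $t=1$ your root-lattice argument applies. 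So you should route the $n_{i}=2$, $e_{i}\notin R$ case through the same product-rule analysis you use for $i\in J$, rather than through the (false) claim that $a_{i,n_{i}}$ is forced even. With that correction, your bookkeeping for the ``main obstacle'' is exactly the $t=1$, $t=2$, $t\geq 3$ case split the paper carries out, and it goes through without further difficulty.
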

\begin{proof}
It follows from (\ref{decsimply}) and (\ref{adjoint}) that we have
\begin{equation}\label{decbetween}
\delta_{1}''\Z q_{1}\oplus \cdots \oplus \delta_{m}''\Z q_{m}\subseteq \Dec(G)\subseteq \delta_{1}'\Z q_{1}\oplus \cdots \oplus\delta_{m}'\Z q_{m}.\end{equation}
By a simple computation, we obtain
\begin{equation}\label{decctwo}
-c_{2}(\rho(\chi))=\begin{cases}
a_{i}^{2}q_{i} & \text{ if } \chi=a_{i}w_{i,1},\, n_{i}=1,\\
2(a_{i}^{2}q_{i}+a_{j}^{2}q_{j}) & \text{ if } \chi=a_{i}w_{i,1}+a_{j}w_{j,1},\, n_{i}=n_{j}=1\\
\end{cases}
\end{equation}
for any nonzero integers $a_{i}, a_{j}$ and 
\begin{equation}\label{decctwoplus}
-c_{2}(\rho(\chi))=(2a_{i,1}^{2}+a_{i,2}^{2}+2a_{i,1}a_{i,2})q_{i}\,\,\text{ if } \chi=a_{i,1}w_{i,1}+a_{i,2}w_{i, 2},\, n_{i}=2
\end{equation}
for any integers $a_{i,1}, a_{i,2} $. Let us denote the right hand side of equation (\ref{propdec}) by $D$. We write $D=\bigoplus D_{u}$, where $D_{u}$ denotes u-th direct summand of $D$ for $1\leq u\leq 4$. First, we show that $D\subseteq \Dec(G)$. If $e_{i}\in R_{1}'$, then by (\ref{Tbasis}) we have $w_{i,1}$, $w_{i,2}\in T^{*}$, thus by (\ref{decctwo}) and (\ref{decctwoplus}) $D_{1}\subseteq \Dec(G)$. Similarly, if $e_{i}+e_{j}\in R$, then by (\ref{Tbasis}), $w_{i,1}+w_{j,1}\in T^{*}$, thus by (\ref{decctwo}) $D_{3}\subseteq \Dec(G)$. Finally, it follows from (\ref{decbetween}) that $D_{2}\oplus D_{4}\subseteq \Dec(G)$.

On the other hand, a character $\lambda$ in the weight lattice $\Lambda=\bigoplus_{i=1}^{m} \Lambda_{i}$ of $G$ can be written as
\begin{equation}\label{lambbbdaexp}
\lambda=\lambda_{i_{1}}+\cdots +\lambda_{i_{t}}=\sum_{j\in J}\lambda_{i_{j}}+\sum_{j\in K}\lambda_{i_{j}}
\end{equation}
for some nonzero characters $\lambda_{i_{j}}\in \Lambda_{i_{j}}$ and some subsets $J=\{1\leq j\leq t\,|\, n_{i_{j}}=1\}$ and $K=\{1\leq j\leq t\,|\, n_{i_{j}}\geq 2\}$ of $I$. We show that $c_{2}(\rho(\lambda))\in D$ for all $\lambda\in T^{*}$. 
First, assume that $t=1$, i.e., $\lambda=a_{i,1}w_{i,1}+\cdots +a_{i,n_{i}}w_{i, n_{i}}$ for some $i$ and $a_{i,1}, \ldots, a_{i,n_{i}}\in \Z$. If $a_{i,n_{i}}$ is even, then $\lambda\in (\Lambda_{i})_{r}$, thus by (\ref{adjoint}) we have $c_{2}(\rho(\lambda))\in D_{2}\oplus D_{4}$. Otherwise, as $\lambda\in T^{*}$ is equivalent to $e_{i}\in R$, by (\ref{decsimply}) we get $c_{2}(\rho(\lambda))\in D_{1}\oplus D_{2}$. 

Now we assume that $t=2$ and $n_{i_{1}}=n_{i_{2}}=1$, i.e., $\lambda=a_{i}w_{i,1}+a_{j}w_{j,1}$ for some $i, j$ and $a_{i}, a_{j}\in \Z\backslash \{0\}$ with $n_{i}=n_{j}=1$. If both $a_{i}$ and $a_{j}$ are even, then $\lambda\in (\Lambda_{i})_{r}\oplus (\Lambda_{j})_{r}$, so $c_{2}(\rho(\lambda))\in D_{3}\oplus D_{4}$. If $a_{i}$ is even and $a_{j}$ is odd, then as $\lambda\in T^{*}$ if and only if $e_{j}\in R_{1}'$, we get $c_{2}(\rho(\lambda))\in D_{1}\oplus D_{3}\oplus D_{4}$. Similarly, if both $a_{i}$ and $a_{j}$ are odd, then by (\ref{decctwo}) we have $c_{2}(\rho(\lambda))\in D_{3}$.

Finally, assume that either $t\geq 3$ or $t=2$ with $n_{i_{1}}n_{i_{2}}\neq 1$. Then, by the action of the normal subgroups $(\Z/2\Z)^{n_{i}}$ of the Weyl group generated by sign switching, we see that the coefficient at each $e_{i_{j},l}$ in the expansion of $c_{2}(\rho(\lambda))$ is divisible by $4$ and $2$ for $j\in J$ and $j\in K$, respectively, i.e., 
\begin{equation}\label{newtypeBctwo}
	c_{2}(\rho(\lambda))=4(\sum_{j\in J} a_{j}q_{j})+2(\sum_{j\in K}b_{j}q_{j}) 
\end{equation}	
for some $a_{i}, b_{i}\in \Z$. Hence, $c_{2}(\rho(\lambda))\in D$, i.e., $\Dec(G)\subseteq D$.\end{proof}

\subsection{Type $C$}
Let $G=(\prod_{i=1}^{m}\gSp_{2n_{i}})/\gmu$ be a split semisimple group of type $C$, $m, n_{i}\geq 1$, where $\gmu$ is a central subgroup. As $c_{2}(\rho(e_{1}))=-q$, we have $\Dec(\gSp_{2n})=\Z q$. Similarly, as $c_{2}(\rho(2e_{1}))=-4q$ and $c_{2}(\rho(e_{1}+e_{2}))=-2(n-1)q$, we have $\tfrac{4}{\gcd(2, n)}q\in \Dec(\gPGSp_{2n})$. Moreover, since the Weyl group of $\gSp_{2n}$ contains a normal subgroup $(\Z/2\Z)^{n}$ generated by sign switching, we see that $\tfrac{4}{\gcd(2, n)} \,|\,c_{2}(\rho(\lambda))$ for any $\lambda\in \Lambda_{r}$ (c.f. \cite[Part II, \S
14]{GMS}), thus $\Dec(\gPGSp_{2n})=\tfrac{4}{\gcd(2, n)}\Z q$ (see \cite[\S 4b]{Mer163}). Therefore, by (\ref{decsum}) we have
\begin{equation}\label{deccbetween}
\delta_{1}''\Z q_{1}\oplus \cdots \oplus \delta_{m}''\Z q_{m}\subseteq \Dec(G)\subseteq \Z q_{1}\oplus \cdots \oplus\Z q_{m}, \text{ where } \delta_{i}''=\begin{cases}
4 & \text{ if } n_{i} \text{ odd},\\
2 & \text{ if } n_{i} \text{ even}.
\end{cases}
\end{equation}
Similar to the case of type $B$, we determine the subgroup $\Dec(G)$ for type $C$.

\begin{proposition}\label{decGC}
	Let $G=(\prod_{i=1}^{m}\gSp_{2n_{i}})/\gmu$, $m, n_{i}\geq 1$, where $\gmu\simeq (\gmu_{2})^{k}$ is a central subgroup. Let $R$ be the subgroup of $(\gmu_{2}^{m})^{*}=(\Z/2\Z)^{m}$ such that $\gmu^{*}=(\gmu_{2}^{m})^{*}/R$. Let $R_{2}''=\langle e_{i}+e_{j}\in R\,|\, e_{i}, e_{j}\not\in R,\, n_{i}\equiv n_{j}\equiv 1 \mod 2\rangle$ be a subspace of $R$ with $\dim R_{2}''=l_{2}$. Then,
	\begin{equation}\label{propdecC}
	\Dec(G)= (\bigoplus_{e_{i}\in R}\Z q_{i})\oplus (\bigoplus_{n_{i}\equiv 0\!\!\!\!\mod 2,\, e_{i}\not\in R} 2\Z q_{i}) \oplus (\bigoplus_{r=1}^{l_{2}} 2\Z q'_{r})\oplus (\bigoplus_{s=1}^{l_{3}} 4\Z q''_{s}),        
	\end{equation}
	where $l_{3}=|\{i \,|\, n_{i}\equiv 1 \mod 2, e_{i}\not\in R\}|-l_{2}$ and $q_{r}'$ $($resp. $q_{s}'')$ is of the form $q_{i}+q_{j}$  $($resp. $q_{i})$ for some $i, j$ such that $\langle q_{r}', q_{s}''\,|\, 1\leq r\leq l_{2}, 1\leq s\leq l_{3}\rangle=\langle q_{i}\,|\, n_{i}\equiv 1 \mod 2, e_{i}\not\in R\rangle$ over $\Z$.
\end{proposition}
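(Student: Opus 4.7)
The plan is to mirror the proof of Proposition \ref{decG} for type $B$, with odd rank in type $C$ playing the role of small rank in type $B$. Denote by $D$ the right-hand side of (\ref{propdecC}) and write $D = D_1 \oplus D_2 \oplus D_3 \oplus D_4$ in the order listed; the goal is to show $D = \Dec(G)$.

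For $D \subseteq \Dec(G)$ I would exhibit explicit decomposable characters. Given $e_i \in R$, (\ref{charTC}) places $e_{i,1}$ in $T^*$ and a direct Weyl-orbit computation gives $c_2(\rho(e_{i,1})) = -q_i$, so $D_1 \subseteq \Dec(G)$. Given a generator $e_i + e_j$ of $R_2''$, the character $e_{i,1} + e_{j,1}$ lies in $T^*$ and $c_2(\rho(e_{i,1} + e_{j,1})) = -2(n_j q_i + n_i q_j)$; since $n_i, n_j$ are odd, reducing modulo $4\Z q_i \oplus 4\Z q_j \subseteq \Dec(G)$ (available from (\ref{deccbetween})) produces $-2(q_i + q_j) \in \Dec(G)$, so $D_3 \subseteq \Dec(G)$. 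The containment $D_2 \oplus D_4 \subseteq \Dec(G)$ is immediate from (\ref{deccbetween}).

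For the reverse inclusion, I would take $\lambda \in T^*$ and decompose $\lambda = \sum_{j=1}^{t} \lambda_{i_j}$ into its nonzero contributions to distinct simple factors. The normal sign-switching subgroup $(\Z/2\Z)^{n_i}$ inside each Weyl group kills all cross terms in $\sum_{\chi \in W(\lambda)} \chi^2$, and further permutation invariance yields the product formula $c_2(\rho(\lambda)) = \sum_j \bigl(\prod_{j' \neq j} |W(\lambda_{i_{j'}})|\bigr) c_2(\rho(\lambda_{i_j}))$. For $t = 1$ the coefficient of $q_i$ lies in $\Z q_i \subseteq D_1$ when $e_i \in R$, and in $\delta_i'' \Z q_i \subseteq D_2 + D_4$ otherwise, by (\ref{charTC}) together with (\ref{deccbetween}). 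For $t \geq 3$ every factor $|W(\lambda_{i_{j'}})|$ with $j' \neq j$ is even, so the coefficient of each $q_{i_j}$ is divisible by $4$ and lands in $D_1 + D_2 + D_4$ depending on whether $e_{i_j} \in R$ and the parity of $n_{i_j}$.

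The heart of the argument is the case $t = 2$. I would split according to the $R$-membership of $e_{i_1}, e_{i_2}$ and the parities of $n_{i_1}, n_{i_2}$, using (\ref{charTC}) to convert $\lambda \in T^*$ into parity constraints on $\sum_l a_{i_k, l}$ and hence on $\|\lambda_{i_k}\|^2 \bmod 2$. The delicate subcase is when both $n_{i_1}, n_{i_2}$ are odd, both $e_{i_1}, e_{i_2} \not\in R$, $e_{i_1} + e_{i_2} \in R_2''$, and both sums $\sum_l a_{i_k, l}$ are odd: a further orbit-size analysis shows that unless both $\lambda_{i_k}$ are single-coordinate characters of the form $c_k e_{i_k, l_k}$ with $c_k$ odd the coefficient of $q_{i_j}$ is already divisible by $4$, while in the reduced case a direct computation gives $c_2(\rho(\lambda)) = -2 n_{i_2} c_1^2 q_{i_1} - 2 n_{i_1} c_2^2 q_{i_2} \equiv -2(q_{i_1} + q_{i_2}) \pmod{4\Z q_{i_1} \oplus 4\Z q_{i_2}}$, which lies in $D_3$ precisely because $q_{i_1} + q_{i_2}$ is one of the chosen generators $q_r'$. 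The remaining $t = 2$ subcases reduce to $D_1 \oplus D_2 \oplus D_4$ by analogous parity bookkeeping. The main technical obstacle is tracking the interplay between the mod-$2$ parity constraint in (\ref{charTC}) and the mod-$4$ divisibility of the resulting $c_2$-coefficients.
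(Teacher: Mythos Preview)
Your argument is essentially the paper's: the same explicit characters establish $D\subseteq\Dec(G)$, and the reverse inclusion rests on the same sign-switching divisibility, with your product formula $c_{2}(\rho(\lambda))=\sum_{j}\big(\prod_{j'\neq j}|W(\lambda_{i_{j'}})|\big)c_{2}(\rho(\lambda_{i_{j}}))$ a clean repackaging of the paper's split on $|\lambda_{i_{1}}|+|\lambda_{i_{2}}|$.

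Two small slips to repair. First, for $t\geq 3$ and in the non-delicate $t=2$ subcases you say the result lands in $D_{1}+D_{2}+D_{4}$, but when $n_{i_{j}}$ is odd with $e_{i_{j}}\notin R$ the element $4q_{i_{j}}$ need not lie in $D_{4}$ alone; it lies in $D_{3}\oplus D_{4}$ because $\{q_{r}',q_{s}''\}$ is a $\Z$-basis, so $4q_{i_{j}}=\sum 4\epsilon_{r}q_{r}'+\sum 4\delta_{s}q_{s}''$ with $4\epsilon_{r}\in 2\Z$. Second, in the delicate $t=2$ subcase you assert that $q_{i_{1}}+q_{i_{2}}$ is one of the chosen generators $q_{r}'$, which is not true in general. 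What you actually need is that $e_{i_{1}}+e_{i_{2}}\in R_{2}''$ forces the $q_{s}''$-coordinates of $q_{i_{1}}+q_{i_{2}}$ to be even (since the reductions $\bar{q}_{r}'$ span $R_{2}''$ over $\Z/2\Z$ and $\{\bar{q}_{r}',\bar{q}_{s}''\}$ is a $\Z/2\Z$-basis), whence $2(q_{i_{1}}+q_{i_{2}})\in D_{3}\oplus D_{4}$. With these corrections your proof goes through.
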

\begin{proof}
Let $T$ be the split maximal torus of $G$. Then, by (\ref{charTC}) we have
\begin{equation}\label{tprimeeq}
T^{*}=\{\sum a_{i,j}e'_{i,j}+\sum a_{i}e_{i,1}\,|\, f_{p}(a_{1},\ldots, a_{m})\equiv 0 \mod 2    \},
\end{equation}
where $e'_{i,j}=e_{i,j}-e_{i,1}$ for all $1\leq i\leq m$ and $2\leq j\leq n_{i}$. First note that we have
\begin{equation}\label{decctwotypetypeC}
-c_{2}(\rho(\chi))=\begin{cases}
a_{i}^{2}q_{i} & \text{ if } \chi\in W(a_{i}e_{i,1}),\\
2(n_{j}a_{i}^{2}q_{i}+n_{i}a_{j}^{2}q_{j}) & \text{ if } \chi\in W(a_{i}e_{i,1}+a_{j}e_{j,1})
\end{cases}
\end{equation}
for any nonzero integers $a_{i}$ and $a_{j}$. We shall denote by $D$ the right hand side of equation (\ref{propdecC}) and write $D=\bigoplus D_{u}$, where $D_{u}$ denotes u-th direct summand of $D$ for $1\leq u\leq 4$. If $e_{i}\in R$, then by (\ref{tprimeeq}) we get $e_{i,1}\in T^{*}$, thus by (\ref{decctwotypetypeC}) $D_{1}\subseteq\Dec(G)$. Similarly, by (\ref{deccbetween}) we have $D_{2}\oplus D_{4}\subseteq\Dec(G)$. Let $e_{i}+e_{j}\in R_{2}''$. Then, by (\ref{tprimeeq}) we have $e_{i,1}+e_{j,1}\in T^{*}$. As both $n_{i}$ and $n_{j}$ are odd, by (\ref{decctwotypetypeC}) we get $2q_{i}+2q_{j}\in \Dec(G)$, i.e., $D_{2}\subseteq \Dec(G)$. Therefore, we get $D\subseteq \Dec(G)$.

Conversely, we shall now show that $c_{2}(\rho(\lambda))\in D$ for all $\lambda\in T^{*}$. Let $\lambda$ be a character written as in $(\ref{lambbbdaexp})$ for some subsets 
\begin{equation}\label{somesubsetjk}
J=\{1\leq j\leq t\,|\, n_{i_{j}}\equiv 1 \mod 2\} \text{ and } K=\{1\leq j\leq t\,|\, n_{i_{j}}\equiv 0 \mod 2\}
\end{equation}
 of $I$. For each $\lambda_{i}=a_{i,1}e_{i,1}+\cdots +a_{i,n_{i}}e_{i, n_{i}}\in \Lambda_{i}$ we shall denote by $|\lambda_{i}|$ the number of nonzero coefficients in $\lambda_{i}$. We first assume that $t=1$, i.e., $\lambda=a_{i,1}w_{i,1}+\cdots +a_{i,n_{i}}e_{i, n_{i}}$ for some $i$ and $a_{i,1}, \ldots, a_{i,n_{i}}\in \Z$. Let $a_{i}=a_{i,1}+\cdots + a_{i, n_{i}}$. By the same argument as in the proof of Proposition \ref{decG}, we have $c_{2}(\rho(\lambda))\in D_{2}\oplus D_{4}$ (resp. $c_{2}(\rho(\lambda))\in D_{1}$) if $a_{i}$ is even (resp. odd). Now we assume that $t=2$ with $|\lambda_{i_{1}}|+|\lambda_{i_{2}}|=2$, i.e., $\lambda=a_{i}e_{i,1}+a_{j}e_{j,1}$ for some $i, j$ and $a_{i}, a_{j}\in \Z\backslash\{0\}$. Then, by the same argument as in the proof of Proposition \ref{decG} we see from (\ref{decctwotypetypeC}) that $c_{2}(\rho(\lambda))\in D$.  

Assume that either $t\geq 3$ or $t=2$ with $|\lambda_{i_{1}}|+|\lambda_{i_{2}}|\geq 3$. Then, as before it follows from the action of the normal subgroups $(\Z/2\Z)^{n_{i}}$ of $W$ that 
\begin{equation*}
c_{2}(\rho(\lambda))=4(\sum_{i\in J} a_{i}q_{i})+2(\sum_{i\in K}b_{i}q_{i}) 
\end{equation*}	
for some $a_{i}, b_{i}\in \Z$. Therefore, we get $c_{2}(\rho(\lambda))\in D$, thus $\Dec(G)\subseteq D$.\end{proof}

\subsection{Type $D$}

Let $G=(\prod_{i=1}^{m}\gSpin_{2n_{i}})/\gmu$ be a split semisimple group of type $D$, where $m\geq 1$, $n_{i}\geq 3$ and $\gmu$ is a central subgroup. Consider the case when $G$ is simple (i.e., $m=1$ and $n_{1}=n$). First of all, as 
\begin{equation}\label{typeDc2}
c_{2}(\rho(\omega_{1}))=-2q,\, c_{2}(\rho(2\omega_{1}))=-8q,\, c_{2}(\rho(\omega_{2}))=\begin{cases} -4(n-1)q & \text{ if } n\geq 4,\\ -q &\text{ if } n=3,\end{cases}
\end{equation}
we have $2\Z q\subseteq \Dec(\gSpin_{2n})$ for $n\geq 4$, $\Dec(\gSpin_{6})=\Z q$, $\tfrac{8}{\gcd(2, n)}\Z q\subseteq \Dec(\gPGO^{+}_{2n})$. On the other hand, as the Weyl group of $\gSpin_{2n}$ contains a normal subgroup $(\Z/2\Z)^{n-1}$ generated by sign switching of even number of coordinates, we see that
$2\,|\,c_{2}(\rho(\lambda))$ for any $\lambda\in \Lambda$ with $n\geq 4$ and $\tfrac{8}{\gcd(2, n)}\,|\,c_{2}(\rho(\lambda'))$ for all $\lambda'\in \Lambda_{r}$ with $n\geq 3$ (c.f. \cite[Part II, \S15]{GMS}), thus $\Dec(\gSpin_{2n})=2\Z q$ for any $n\geq 4$ and $\Dec(\gPGO^{+}_{2n})=\tfrac{8}{\gcd(2, n)}\Z q$ for any $n\geq 3$ (see \cite[\S 4b]{Mer163}). Hence, by (\ref{decsum}) we obtain 
\begin{equation}\label{decDbetween}
\delta_{1}''\Z q_{1}\oplus \cdots \oplus \delta_{m}''\Z q_{m}\subseteq \Dec(G)\subseteq \delta_{1}'\Z q_{1}\oplus \cdots \oplus \delta_{m}'\Z q_{m}, \text{ where } 
\end{equation}
\[\delta_{i}''=\begin{cases}
8 & \text{ if } n_{i} \text{ odd},\\
4 & \text{ if } n_{i} \text{ even},
\end{cases}
\text{ and } \delta_{i}'=\begin{cases}
2 & \text{ if } n_{i}\geq 4,\\
1 & \text{ if } n_{i}=3.
\end{cases}
\]

For the remaining simple groups $\gO^{+}_{2n}$ and $\gHSpin_{2n}$ ($n$ even), we also have $2\Z q\subseteq \Dec(\gO^{+}_{2n})$ and $4\Z q\subseteq \Dec(\gHSpin_{2n})$ by (\ref{typeDc2}). Moreover, if $n=4$, then we have
\begin{equation}\label{ctwonfour}
c_{2}(\rho(\omega_{3}))=c_{2}(\rho(\omega_{4}))=-2q,
\end{equation}
thus  $2\Z q\subseteq \Dec(\gHSpin_{8})$. Then, by the action of the Weyl group as above we obtain $\Dec(\gO^{+}_{2n})=2\Z q$ for all $n\geq 3$,  $\Dec(\gHSpin_{2n})=4\Z q$ for even $n\geq 6$, and $\Dec(\gHSpin_{8})=2\Z q$ (\cite[Theorem 5.1]{BR}). In general, we determine the subgroup $\Dec(G)$ for type $D$.

\begin{proposition}\label{decGD}
Let $G=(\prod_{i=1}^{m}\gSpin_{2n_{i}})/\gmu$, $m\geq 1$, $n_{i}\geq 3$, where $\gmu$ is a central subgroup. Let $R$ be the subgroup of $($\ref{centerD}$)$ such that $\gmu^{*}=Z/R$, $R_{1,i}^{}=R\cap Z_{i}$ for odd $n_{i}$, and $R'_{1,i}=R\cap Z_{i}$ for even $n_{i}$. Set
\begin{align*}
	&I_{1}'=\{i \,\,|\,\, R_{1,i}\neq 0, n_{i}\neq 3\}\cup  \{i \,\,|\,\,R_{1,i}=2Z_{i}, n_{i}=3\}\cup \{i \,\,|\,\, R_{1,i}'\neq 0,\, n_{i}=4\} \,\cup\\ &\{i \,\,|\,\, e_{i,1}+e_{i,2}\in R_{1,i}',\, n_{i}\geq 6\},\,\,  I_{2}'=\{i \,\,|\,\, R_{1,i}'=0\}\cup \{i \,\,|\,\, e_{i,1}+e_{i,2}\not\in R_{1,i}'\neq 0,\, n_{i}\geq 6\}.
\end{align*}
Then, we have
\begin{equation}\label{propdecD}
\Dec(G)=\big(\bigoplus_{R_{1,i}=Z_{i},\, n_{i}=3} \Z q_{i}\big) \oplus \big(\bigoplus_{i\in I_{1}'} 2\Z q_{i}\big) \oplus \big(\bigoplus_{i\in I_{2}'} 4\Z q_{i}\big) \oplus \big(\bigoplus_{r=1}^{l_{2}} 4\Z q_{r}'\big)\oplus \big( \bigoplus_{s=1}^{l_{3}} 8\Z q_{s}'' \big),
\end{equation}
where $l_{2}=\dim_{\Z/2\Z}\langle e_{i}+e_{j}\,\,|\,\, 2e_{i}+2e_{j}\in R, R_{1,i}=R_{1,j}=0\rangle$, $l_{3}=|\{i \,|\, R_{1,i}=0 \}|-l_{2}$,
and $q_{r}'$ $($resp. $q_{s}'')$ is of the form $q_{i}+q_{j}$  $($resp. $q_{i})$ for some $i$, $j$ such that $\langle q_{i}\,|\, R_{1,i}=0\rangle=\langle q_{r}', q_{s}''\,|\, 1\leq r\leq l_{2}, 1\leq s\leq l_{3}\rangle$ over $\Z$.
\end{proposition}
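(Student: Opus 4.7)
The plan is to mimic the proof structure used for types $B$ and $C$, namely: use the bounds from the simply connected and adjoint cases to sandwich $\Dec(G)$, construct explicit characters in $T^{*}$ whose images under $c_{2}\circ\rho$ realize each summand of the claimed $D$, and then apply the Weyl group sign-switch subgroup to force the converse divisibility. Concretely, denote the right hand side of (\ref{propdecD}) by $D=\bigoplus_{u=1}^{5}D_{u}$. By (\ref{decsum}) and (\ref{decDbetween}) we already have the two-sided bound
\[ \delta_{1}''\Z q_{1}\oplus\cdots\oplus\delta_{m}''\Z q_{m}\subseteq\Dec(G)\subseteq \delta_{1}'\Z q_{1}\oplus\cdots\oplus\delta_{m}'\Z q_{m},\]
which immediately handles $D_{3}\oplus D_{5}\subseteq\Dec(G)$ (after a check of divisibilities coming from the exceptional small ranks $n_{i}=3,4$).

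To show $D_{1}\oplus D_{2}\subseteq\Dec(G)$, I would use the explicit description of $T^{*}$ given in (\ref{charTD}) to decide, for each $i$, which of the fundamental weights $\omega_{i,n_{i}-1},\omega_{i,n_{i}}$ (or their sums and doublings) lie in $T^{*}$, governed by whether $R_{1,i}$ or $R'_{1,i}$ contains specific generators. Substituting these characters into the formulas (\ref{typeDc2}) and (\ref{ctwonfour}), and splitting by parity of $n_{i}$, yields the coefficient $1$ (the case $R_{1,i}=Z_{i}$, $n_{i}=3$), coefficient $2$ (the various subcases grouped into $I_{1}'$) or coefficient $4$ (the subcases in $I_{2}'$). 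For the cross-term summand $D_{4}$, I would use pairs of indices $i,j$ with $R_{1,i}=R_{1,j}=0$ but $2e_{i}+2e_{j}\in R$; by (\ref{charTD}) a character like $\omega_{i,1}+\omega_{j,1}$ (or the appropriate adjustment making it lie in $T^{*}$) gives $c_{2}(\rho(\cdot))=-2q_{i}-2q_{j}$ up to the correction $-4(q_{i}+q_{j})$, so that $4(q_{i}+q_{j})\in\Dec(G)$ by doubling.

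For the reverse inclusion $\Dec(G)\subseteq D$, I would take an arbitrary $\lambda\in T^{*}$ and decompose it as $\lambda=\sum_{j=1}^{t}\lambda_{i_{j}}$ with $0\neq\lambda_{i_{j}}\in\Lambda_{i_{j}}$. The case $t=1$ reduces immediately to the simple-group computations recalled above, via (\ref{decsimply})-type statements for type $D$ determined by whether the coset of $\lambda_{i}$ lies in the root lattice. The case $t=2$ with $|\lambda_{i_{1}}|=|\lambda_{i_{2}}|=1$ is the only genuine cross-term contribution, and here the pairing $(e_{i}+e_{j})$-membership in $R$ is what places $c_{2}(\rho(\lambda))$ either in $D_{4}$ or forces an extra factor of $2$ to land in $D_{5}$. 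In all remaining cases ($t\geq 3$, or $t=2$ with $|\lambda_{i_{1}}|+|\lambda_{i_{2}}|\geq 3$), the normal subgroup $(\Z/2\Z)^{n_{i}-1}$ of even sign-switches in each Weyl factor, together with the permutation of coordinates, averages $\rho(\lambda)$ enough that each coefficient of $q_{i}$ in $c_{2}(\rho(\lambda))$ picks up a factor of $4$ or $8$ according to whether $n_{i}$ is even or odd, placing $c_{2}(\rho(\lambda))$ inside $D_{3}\oplus D_{5}$.

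The main obstacle is the subtler center of type $D$: when $n_{i}$ is even, $Z_{i}\simeq(\Z/2\Z)^{2}$ and the three proper nonzero subgroups $(\Z/2\Z)e_{i,1}$, $(\Z/2\Z)e_{i,2}$, $(\Z/2\Z)(e_{i,1}+e_{i,2})$ produce genuinely different $T^{*}$'s, only the third of which yields a fundamental weight (namely $\omega_{i,n_{i}-1}+\omega_{i,n_{i}}$) that gives $c_{2}(\rho)=-2q_{i}$ directly; this is exactly why $I_{1}'$ and $I_{2}'$ distinguish the presence of $e_{i,1}+e_{i,2}$ in $R_{1,i}'$. Combined with the low-rank exceptions at $n_{i}=3$ (where $\Dec(\gSpin_{6})=\Z q$ and the spin representation $\omega_{i,n_{i}}$ is miniscule with $c_{2}(\rho(\omega_{i,n_{i}}))=-q_{i}$) and $n_{i}=4$ (where (\ref{ctwonfour}) produces an extra factor of $2$), this case analysis is where the bulk of the work lies; the sign-switch Weyl group argument in the final reduction, although standard, also requires care because in type $D$ only even-coordinate sign switches lie in $W$, which is what ultimately yields the weaker divisibility $2$ versus $4$ on $q_{i}$.
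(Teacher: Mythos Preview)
Your proposal follows essentially the same approach as the paper's proof, with the same overall case analysis and the same use of the even sign-switch subgroup $(\Z/2\Z)^{n_{i}-1}$ for the reverse inclusion. Two details need correction, however.

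First, for $D_{2}\subseteq\Dec(G)$ the key weight is not one of the spin weights $\omega_{i,n_{i}-1},\omega_{i,n_{i}}$ but rather $\omega_{i,1}$: the condition $2e_{i}\in R$ (odd $n_{i}$) or $e_{i,1}+e_{i,2}\in R$ (even $n_{i}$) is, by (\ref{charTD}), exactly what makes $\omega_{i,1}\in T^{*}$, and then (\ref{typeDc2}) gives $c_{2}(\rho(\omega_{i,1}))=-2q_{i}$. The formulas in (\ref{typeDc2}) concern $\omega_{1}$ and $\omega_{2}$, not the spin weights; the latter (via (\ref{ctwonfour})) are invoked only for the exceptional subcase $n_{i}=4$ with $R'_{1,i}=(\Z/2\Z)e_{i,1}$ or $(\Z/2\Z)e_{i,2}$, where $\omega_{i,3}$ or $\omega_{i,4}$ lies in $T^{*}$ but $\omega_{i,1}$ need not.

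Second, your cross-term formula for $D_{4}$ is off by a factor. The paper computes directly (equation (\ref{typedtwocomp}))
\[
-c_{2}\bigl(\rho(a_{i}\omega_{i,1}+a_{j}\omega_{j,1})\bigr)=4\bigl(n_{j}a_{i}^{2}q_{i}+n_{i}a_{j}^{2}q_{j}\bigr),
\]
so taking $a_{i}=a_{j}=1$ and using that $n_{i},n_{j}$ are odd yields $4(q_{i}+q_{j})\in\Dec(G)$ modulo $D_{5}$ immediately; there is no ``$-2q_{i}-2q_{j}$'' step and no doubling is needed.
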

\begin{proof}
Let $\gmu \simeq (\gmu_{2})^{k_{1}}\times (\gmu_{4})^{k_{2}}$ be a central subgroup for some $k_{1}, k_{2}\geq 0$ with $k=k_{1}+k_{2}$. We denote by $D$ the right hand side of equation (\ref{propdecD}) and we write $D=\bigoplus D_{u}$, where $D_{u}$ denotes u-th direct summand of $D$ for $1\leq u\leq 5$. If $e_{i}\in R$ with $n_{i}=3$, then by (\ref{typeDc2}) $D_{1}\subseteq \Dec(G)$. If $2e_{i}\in R$ or $e_{i,1}+e_{i,2}\in R$, then by (\ref{charTD}) we have $w_{i,1}\in T^{*}$, thus by (\ref{typeDc2}) $2q_{i}\in \Dec(G)$. Similarly, if $e_{i1}\in R_{1,i}'$ (resp. $e_{i,2}\in R_{1,i}'$) with $n_{i}=4$, then by (\ref{charTD}) we have $w_{i,3}\in T^{*}$ (resp. $w_{i,4}\in T^{*}$), thus by (\ref{ctwonfour}) $2q_{i}\in \Dec(G)$. Therefore, $D_{2}\subseteq\Dec(G)$. By a simple calculation, we have
\begin{equation}\label{typedtwocomp}
-c_{2}(\rho(\chi))=4(n_{j}a_{i}^{2}q_{i}+n_{i}a_{j}^{2}q_{j}) \text{ if } \chi\in W(a_{i}w_{i,1}+a_{j}w_{j,1})
\end{equation}
for any nonzero integers $a_{i}$ and $a_{j}$. If $2e_{i}+2e_{j}\in R$ for some $i\neq j$, then again by (\ref{charTD}) we obtain $w_{i,1}+w_{j,1}\in T^{*}$. As both $n_{i}$ and $n_{j}$ are odd, by (\ref{typedtwocomp}) $D_{4}\subseteq \Dec(G)$. Finally, it follows by (\ref{decDbetween}) that $D_{3}\oplus D_{5}\subseteq \Dec(G)$, thus $D\subseteq \Dec(G)$.

Now we prove that $c_{2}(\rho(\lambda))\in D$ for all $\lambda\in T^{*}$. Let $\lambda$ be a character written as in $(\ref{lambbbdaexp})$ for some subsets $J$ and $K$ in (\ref{somesubsetjk}). Assume that $t=1$, i.e., $\lambda=a_{i,1}w_{i,1}+\cdots +a_{i,n_{i}}w_{i,n_{i}}$. Applying the same argument as in the proof of Proposition \ref{decGC} we obtain
\[c_{2}(\rho(\lambda))\in \begin{cases} D_{4}\oplus D_{5} & \text{ if } A_{i}=0 \text{ with odd }n_{i},\\ D_{2} & \text{ if } A_{i}\neq 0 \text{ with odd } n_{i}\geq 5 ;\text{ or } A_{i}=2e_{i} \text{ with } n_{i}=3,\\ D_{1} & \text{ if } A_{i}=\pm e_{i} \text{ with } n_{i}=3,\end{cases} \]
where $A_{i}$ denotes the image of $\lambda$ in $Z$ as defined in (\ref{defaii}) and
\[c_{2}(\rho(\lambda))\in \begin{cases} D_{3} & \text{ if } A_{i}=0 \text{ with even }n_{i} ;\text{ or } A_{i}\neq e_{i,1}+e_{i,2} \text{ with even }n_{i}\geq 6,\\ D_{2} & \text{ if } A_{i}\neq 0 \text{ with } n_{i}=4 ;\text{or } A_{i}=e_{i,1}+e_{i,2} \text{ with even }n_{i}\geq 6.\end{cases}\]

We assume that $t=2$ with $\lambda_{i_{1}}$ with $|\lambda_{i_{1}}|+|\lambda_{i_{2}}|=2$. Then, by the same argument as in the proof of Proposition \ref{decG} together with (\ref{typedtwocomp}) we get $c_{2}(\rho(\lambda))\in D$. Finally, Assume that either $t\geq 3$ or $t=2$ with $|\lambda_{i_{1}}|+|\lambda_{i_{2}}|\geq 3$. Then, by the action of the normal subgroups $(\Z/2\Z)^{n_{i}-1}$ of the Weyl group of $G$ we obtain 
\begin{equation*}
	c_{2}(\rho(\lambda))=8(\sum_{i\in J} a_{i}q_{i})+4(\sum_{i\in K}b_{i}q_{i}) 
\end{equation*}	
for some $a_{i}, b_{i}\in \Z$, thus, $c_{2}(\rho(\lambda))\in \Dec(G)$. Hence, $\Dec(G)\subseteq D$.\end{proof}

\section{Degree $3$ invariants for semisimple groups $G$ of types $B$, $C$, $D$}

We now determine the group of reductive indecomposable invariants of split semisimple groups of types $B$, $C$, and $D$ by using the results of Section \ref{QG}, Propositions \ref{decG}, \ref{decGC}, and \ref{decGD}.

\subsection{Type $B$}

\begin{theorem}\label{mainthm}
Let $G=(\prod_{i=1}^{m}\gSpin_{2n_{i}+1})/\gmu$, $m, n_{i}\geq 1$, where $\gmu\simeq (\gmu_{2})^{k}$ is a central subgroup. Let $R$ be the subgroup of $(\gmu_{2}^{m})^{*}=(\Z/2\Z)^{m}$ whose quotient is the character group $\gmu^{*}$. Then,
\[ \Inv^{3}(G)_{\red}=(\Z/2\Z)^{m-k-l_{1}-l_{2}},\,\, \text{where}\]
$l_{1}=\dim \langle e_{i}\in R\,|\, n_{i}\leq 2\rangle$ and $l_{2}=\dim \langle e_{i}+e_{j}\in R\,|,\, e_{i}, e_{j}\not\in R,\, n_{i}=n_{j}=1\rangle$.
\end{theorem}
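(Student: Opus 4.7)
The plan is to identify $\Inv^{3}(G)_{\red}$ with $Q(G_{\red})/\Dec(G_{\red})$ for a strict reductive envelope $G_{\red}$, and then compute this quotient directly. By \cite[\S 10]{Mer162}, $\Inv^{3}(G)_{\red} \cong \Inv^{3}(G_{\red})_{\ind}$, and by the exact sequence recalled in Section \ref{CIThree} this latter group is isomorphic to $Q(G_{\red})/\Dec(G_{\red})$, so the task reduces to computing the latter.

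A convenient choice is $G_{\red}=(\prod_{i=1}^{m}\gGamma_{2n_{i}+1})/\gmu$, where $\gGamma_{2n+1}$ denotes the even Clifford group (whose derived subgroup is $\gSpin_{2n+1}$ and whose center is $\gm$). The character lattice $T^{*}_{G_{\red}}$ embeds in $\Lambda \oplus \bigoplus_{i=1}^{m}\Z\mu_{i}$ as the sublattice of pairs $(\lambda,(b_{i}))$, with $\lambda=\sum a_{i,j}w_{i,j}$, satisfying the parity condition $a_{i,n_{i}} + b_{i} \equiv 0 \mod 2$ (from the definition of $\gGamma_{2n_{i}+1}$) together with the $R$-condition $(\bar{a}_{i,n_{i}})_{i} \in R$ (from the quotient by $\gmu$). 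A $\Z$-basis is obtained by extending the basis (\ref{Tbasis}) of $T^{*}_{G}$ by the central-torus characters $\{2\mu_{i}\}_{i=1}^{m}$.

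Following the templates of Section \ref{QG} and the proof of Proposition \ref{decG}, one then computes $Q(G_{\red}) = S^{2}(T^{*}_{G_{\red}})^{W}$ and $\Dec(G_{\red})$ on this enlarged lattice. The key new phenomenon is that $Q(G_{\red})$ admits $W$-invariant integer quadratic forms whose $q_{i}$-coefficients are half-integer, provided they are matched with suitable $\mu_{j}^{2}$-contributions satisfying congruences mod $4$ dictated by the center embedding. Correspondingly, $\Dec(G_{\red})$ gains new generators from Chern-class images $c_{2}(\rho(\lambda))$ for characters $\lambda$ involving $w_{i,n_{i}} + \mu_{i}$, which are available in $T^{*}_{G_{\red}}$ precisely when $e_{i}$ or a suitable $e_{i}+e_{j}$ lies in $R$. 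For $n_{i} \leq 2$ the Weyl orbit of $w_{i,n_{i}} + \mu_{i}$ is small enough that $c_{2}$ produces an odd $q_{i}$-coefficient, trivializing the would-be $\Z/2\Z$-class associated to $e_{i}$; for $n_{i} \geq 3$ the orbit is larger and only even $q_{i}$-coefficients arise, leaving the class nontrivial. An analogous pair analysis controls the $e_{i}+e_{j}$-classes when $n_{i} = n_{j} = 1$.

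Assembling these computations, the quotient $Q(G_{\red})/\Dec(G_{\red})$ becomes an elementary abelian $2$-group generated by the classes parameterized by $R/(R_{1}'+R_{2}')$, yielding the stated rank $m - k - l_{1} - l_{2}$. The principal obstacle is the combinatorial bookkeeping: one must carefully track which Weyl-orbit sums in the enlarged lattice produce new decomposables beyond those already in $\Dec(G)$, and verify that these new decomposables trivialize exactly the classes indexed by $R_{1}'$ and $R_{2}'$ and no more. Handling the case split according to whether $n_{i} \leq 2$ or $n_{i} \geq 3$ (mirroring the split in $\delta_{i}'$ from (\ref{decsimply})) and the joint pair behavior when $n_{i} = n_{j} = 1$ is the delicate part of the argument.
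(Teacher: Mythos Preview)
Your strategy is sound in principle---computing $Q(G_{\red})/\Dec(G_{\red})$ directly for the Clifford-group envelope does yield $\Inv^{3}(G)_{\red}$---but what you have written is a sketch rather than a proof: the actual computations of $Q(G_{\red})$ and $\Dec(G_{\red})$ on the enlarged lattice are only gestured at (``one then computes'', ``assembling these computations''), and the case analysis you flag as ``the delicate part'' is precisely the content that needs to be written out.

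The paper takes a different and more economical route. Rather than passing to the larger lattice $T^{*}_{G_{\red}}$ and redoing the $Q$/$\Dec$ computation there, it invokes the criterion of \cite[Proposition~7.1]{LM}: an element $q=\sum d_{i}q_{i}\in Q(G)$ represents a reductive invariant if and only if the order of each fundamental weight $\bar{w}_{i,j}$ in $\Lambda/T^{*}$ divides $\theta_{i,j}d_{i}$, where $\theta_{i,j}$ is the squared length of the corresponding coroot. For type $B$ the only nontrivial constraint is $2\mid d_{i}$ whenever $n_{i}=1$ and $e_{i}\notin R$; this forces condition (\ref{secondeq}) to hold automatically and halves condition (\ref{firsteqprime}), giving an explicit description of the numerator (equation (\ref{elementinQ})). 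The quotient by $\Dec(G)$---already computed in Proposition~\ref{decG}---is then read off directly. This keeps all the work inside the original character lattice $T^{*}$ and reuses the $\Dec(G)$ computation verbatim, whereas your approach would require reproving an analogue of Proposition~\ref{decG} on the extended lattice with the extra $\mu_{i}$-characters. Both approaches should give the same answer, but the paper's avoids the duplication.
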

\begin{proof}
Let $R=\{r=(r_{1},\ldots, r_{m})\in (\Z/2\Z)^{m}\,|\, f_{p}(r)=0, 1\leq p\leq k\}$ be the subgroup of $(\Z/2\Z)^{m}$ whose quotient is the character group $\gmu^{*}$ for some linear polynomials $f_{p}\in \Z/2\Z[t_{1},\ldots, t_{m}]$. Let $\alpha_{i,j}$ denote the simple roots of the $i$th component of the root system of $G$ and let $\theta_{i,j}$ be the square of the length of the coroot of $\alpha_{i,j}$. Then, we have 
\begin{equation*}
	\theta_{i,j}=\begin{cases}
		2 & \text{ if } j=n_{i}\geq 2,\\
		1 & \text{ if } n_{i}\geq 2, 1\leq j\leq n_{i}-1; \text{ or } j=n_{i}=1.
	\end{cases}
\end{equation*}
By \cite[Proposition 7.1]{LM}, an indecomposable invariant of $G$ corresponding to $q=\sum_{i=1}^{m}d_{i}q_{i}\in Q(G)$ is reductive indecomposable if and only if the order $|\bar{w}_{i,j}|$ in $\Lambda/T^{*}$ divides $\theta_{i,j}d_{i}$ for all $i$ and $j$.

Since $|\bar{w}_{i,1}|=1$ with $n_{i}=1$ is equivalent to $e_{i}\in R$ and 
\begin{equation*}
|\bar{w}_{i,j}|\leq\begin{cases}
2 & \text{ if } j=n_{i}\geq 2,\\
1 & \text{ if } n_{i}\geq 2, 1\leq j\leq n_{i}-1,
\end{cases}
\end{equation*}
we see that the equation (\ref{secondeq}) becomes trivial and we may assume that the term $\delta_{i}d_{i} $($=d_{i}$) appears in the equation (\ref{firsteqprime}) is divisible by $2$. Therefore, any reductive indecomposable invariant of $G$ corresponding to $q=\sum_{i=1}^{m}d_{i}q_{i}\in Q(G)$ satisfies
\[f_{p}(\frac{\delta_{1}d_{1}}{2}, \ldots, \frac{\delta_{m}d_{m}}{2})\equiv 0 \mod 2, \text{ where } \delta_{i}=\begin{cases}
		2 & \text{ if } n_{i}\geq 2 \text{ or } e_{i}\in R,\\
		1 & \text{ if } n_{i}=1 \text{ and } e_{i}\not\in R.
		\end{cases}\]
for all $p$, thus we have
\begin{equation}\label{elementinQ}
\Inv^{3}(G)_{\red}=\frac{\{\sum_{i=1}^{m}d_{i}q_{i}\,|\,  f_{p}(\tfrac{\delta_{1}d_{1}}{2}, \ldots, \tfrac{\delta_{m}d_{m}}{2})\equiv 0 \mod 2  \}}{\Dec(G)}.
\end{equation}

Let $R'=R\cap (\bigoplus_{e_{i}\not\in R}(\Z/2\Z) e_{i})$. Then, the group in the numerator of (\ref{elementinQ}) is generated by
\[\{q_{i}\,|\, e_{i}\in R\}\cup \{\sum_{i=1}^{m}\big(\frac{2r_{i}}{\delta_{i}}\big)q_{i}\,| r=(r_{1},\ldots, r_{m})\in R'\}\cup\{\big(\frac{4}{\delta_{i}}\big)q_{i}\,|\, e_{i}\not\in R\}.\]
Hence, the statement for the group of indecomposable reductive invariants follows by Proposition \ref{decG}.\end{proof}

In particular, under the assumption that the ranks of all components of the root system of $G$ are at least $2$ we have the following result.
\begin{corollary}\label{coromain}
Let $G=(\prod_{i=1}^{m}\gSpin_{2n_{i}+1})/\gmu$, $m, n_{i}\geq 1$, where $\gmu\simeq (\gmu_{2})^{k}$ is a central subgroup. Let $R$ be the subgroup of $(\gmu_{2}^{m})^{*}=(\Z/2\Z)^{m}$ whose quotient is the character group $\gmu^{*}$. Assume that $n_{i}\geq 2$ for all $1\leq i\leq m$. Then,
\[ \Inv^{3}(G)_{\ind}=\Inv^{3}(G)_{\red}=(\Z/2\Z)^{m-k-l},\]
where $l=\dim\langle e_{i}\in R\,|\, n_{i}=2\rangle$.
\end{corollary}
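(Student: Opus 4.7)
The corollary falls out of Theorem \ref{mainthm} once one observes two simplifications under the hypothesis $n_i\geq 2$ for all $i$. First, the subspace $R_2'$ of relations $e_i+e_j\in R$ with $n_i=n_j=1$ appearing in the theorem's formula is automatically zero, so $l_2=0$. Second, the subspace $R_1'=\langle e_i\in R\mid n_i\leq 2\rangle$ collapses to $\langle e_i\in R\mid n_i=2\rangle$, whose dimension is precisely $l$. Plugging these into the conclusion of Theorem \ref{mainthm} immediately gives $\Inv^{3}(G)_{\red}=(\Z/2\Z)^{m-k-l}$.

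It remains to establish the equality $\Inv^{3}(G)_{\ind}=\Inv^{3}(G)_{\red}$. Since the inclusion $\Inv^{3}(G)_{\red}\subseteq \Inv^{3}(G)_{\ind}$ is built into the definition (via the injective restriction from a strict reductive envelope), the plan is to check the reverse inclusion: every indecomposable invariant is already reductive indecomposable. I would invoke the criterion of \cite[Proposition 7.1]{LM}, used in the proof of Theorem \ref{mainthm}: an element $q=\sum d_i q_i\in Q(G)$ yields a reductive indecomposable invariant exactly when $|\bar{w}_{i,j}|$ divides $\theta_{i,j}d_i$ in $\Lambda/T^*$ for every $i$ and $j$.

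Under the hypothesis $n_i\geq 2$, all the relevant divisibility conditions trivialize. For indices $j<n_i$ the weight $w_{i,j}$ already lies in the root lattice of the $i$th simple factor of type $B_{n_i}$ with $n_i\geq 2$, so $|\bar{w}_{i,j}|=1$. For $j=n_i\geq 2$ the spin weight $w_{i,n_i}$ has $|\bar{w}_{i,n_i}|\in\{1,2\}$ (equal to $1$ precisely when $e_i\in R$), while $\theta_{i,n_i}=2$; hence $|\bar{w}_{i,n_i}|$ automatically divides $\theta_{i,n_i}d_i=2d_i$ for every $d_i\in\Z$. Thus every $q\in Q(G)$ defines a reductive indecomposable invariant, which gives $\Inv^{3}(G)_{\ind}=\Inv^{3}(G)_{\red}$.

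The only step that requires genuine care is verifying these order bounds $|\bar{w}_{i,j}|$ in $\Lambda/T^*$, but both facts are routine consequences of the description of $T^*$ in (\ref{Tdiagram})--(\ref{Tbasis}) together with the standard identification of the cocenter of a simple group of type $B_n$ with $\Z/2\Z$ generated by the class of $w_{i,n_i}$; no new computation is needed beyond what was already set up in Section \ref{QG}. Consequently, the corollary is obtained by combining Theorem \ref{mainthm} with this criterion-checking.
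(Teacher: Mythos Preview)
Your proof is correct and follows essentially the same approach as the paper's own argument: the paper simply says that the inclusion $\Inv^{3}(G)_{\ind}\subseteq \Inv^{3}(G)_{\red}$ ``follows directly from the proof of Theorem \ref{mainthm},'' and what you have written is precisely the unpacking of that reference---namely, that under $n_i\geq 2$ the Laackman--Merkurjev divisibility criterion $|\bar w_{i,j}|\mid \theta_{i,j}d_i$ is automatically satisfied for every $q\in Q(G)$, since $|\bar w_{i,j}|=1$ for $j<n_i$ and $|\bar w_{i,n_i}|\leq 2=\theta_{i,n_i}$. Your treatment of the $l_1,l_2$ simplifications is likewise just an explicit reading of Theorem \ref{mainthm} under the hypothesis.
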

\begin{proof}
By Theorem \ref{mainthm}, it suffices to show that $\Inv^{3}(G)_{\ind}\subseteq \Inv^{3}(G)_{\red}$. Since $n_{i}\geq 2$ for all $1\leq i\leq m$, the inclusion follows directly from the proof of Theorem \ref{mainthm}.
\end{proof}
\begin{remark}
One can directly compute $\Inv^{3}(G)_{\ind}$ using Propositions \ref{QGprop} and \ref{decG}.
\end{remark}

We present below another particular case of Theorem \ref{mainthm} (and Theorem \ref{mainthmC}), which follows by the exceptional isomorphism $A_{1}=B_{1}=C_{1}$. This result in turn determine the reductive invariants of semisimple groups of type $A$ (see \cite[Theorem 7.1]{Mer161}).
\begin{corollary}\label{corone}
Let $G=(\prod_{i=1}^{m}\gSL_{2})/\gmu$, $m\geq 1$, where $\gmu\simeq (\gmu_{2})^{k}$ is a central subgroup. Let $R$ be the subgroup of $(\gmu_{2}^{m})^{*}=(\Z/2\Z)^{m}$ whose quotient is the character group $\gmu^{*}$. Then,
\[ \Inv^{3}(G)_{\red}=(\Z/2\Z)^{m-k-l_{1}-l_{2}},\]
where $l_{1}=\dim \langle e_{i}\in R \rangle$ and $l_{2}=\dim \langle e_{i}+e_{j}\in R\,|\, e_{i}, e_{j}\not\in R \rangle$.
\end{corollary}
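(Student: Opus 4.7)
The plan is to deduce Corollary \ref{corone} as an immediate specialization of Theorem \ref{mainthm} via the exceptional isomorphism $A_{1}=B_{1}$, i.e., $\gSL_{2}\simeq \gSpin_{3}$. First I would observe that this isomorphism identifies the two centers (both isomorphic to $\gmu_{2}$), so taking an $m$-fold product yields a canonical isomorphism $\prod_{i=1}^{m}\gSL_{2}\simeq \prod_{i=1}^{m}\gSpin_{3}$ which carries the center $\gmu_{2}^{m}$ to the center $\gmu_{2}^{m}$ identically. In particular, any central subgroup $\gmu\subseteq \prod_{i=1}^{m}\gSL_{2}$ corresponds to the \emph{same} central subgroup of $\prod_{i=1}^{m}\gSpin_{3}$, and the subgroup $R\subseteq (\gmu_{2}^{m})^{*}=(\Z/2\Z)^{m}$ described by the same defining relations plays both roles. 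Hence $G\simeq (\prod_{i=1}^{m}\gSpin_{2n_{i}+1})/\gmu$ with $n_{i}=1$ for every $i$, placing $G$ squarely in the hypothesis of Theorem \ref{mainthm}.

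Next I would match the parameters. In Theorem \ref{mainthm} the invariants $l_{1}$ and $l_{2}$ are defined as $l_{1}=\dim\langle e_{i}\in R\mid n_{i}\leq 2\rangle$ and $l_{2}=\dim\langle e_{i}+e_{j}\in R\mid e_{i},e_{j}\notin R,\ n_{i}=n_{j}=1\rangle$. Since we are in the case where $n_{i}=1$ for every $i$, the conditions $n_{i}\leq 2$ and $n_{i}=n_{j}=1$ are vacuous, so these expressions collapse to exactly the $l_{1}=\dim\langle e_{i}\in R\rangle$ and $l_{2}=\dim\langle e_{i}+e_{j}\in R\mid e_{i},e_{j}\notin R\rangle$ appearing in Corollary \ref{corone}. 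Substituting into the formula $\Inv^{3}(G)_{\red}=(\Z/2\Z)^{m-k-l_{1}-l_{2}}$ provided by Theorem \ref{mainthm} then gives the claim verbatim.

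There is no substantive obstacle here: once the exceptional isomorphism is invoked the corollary is just a relabeling of Theorem \ref{mainthm}. The only point that requires (minor) care is checking that the central subgroup $\gmu$ truly corresponds under the isomorphism $\gSL_{2}\simeq\gSpin_{3}$, which is automatic since both groups have center $\gmu_{2}$ and the isomorphism is one of algebraic groups, hence sends center to center. One could alternatively obtain the same result by invoking Theorem \ref{mainthmC} through the isomorphism $A_{1}=C_{1}$, since $\gSL_{2}\simeq \gSp_{2}$ and the analogous specialization (all $n_{i}=1$, so in particular all odd) yields precisely the same formula; this provides an internal consistency check but is not needed for the proof.
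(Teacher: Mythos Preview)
Your proposal is correct and mirrors the paper's own argument exactly: the paper states that Corollary~\ref{corone} is a particular case of Theorem~\ref{mainthm} (and Theorem~\ref{mainthmC}) via the exceptional isomorphism $A_{1}=B_{1}=C_{1}$, which is precisely the specialization $n_{i}=1$ for all $i$ that you carry out. Your observation that the same result follows from Theorem~\ref{mainthmC} through $\gSL_{2}\simeq\gSp_{2}$ is also noted in the paper.
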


\subsection{Type $C$}

\begin{theorem}\label{mainthmC}
	Let $G=(\prod_{i=1}^{m}\gSp_{2n_{i}})/\gmu$, $m, n_{i}\geq 1$, where $\gmu\simeq (\gmu_{2})^{k}$ is a central subgroup. Let $R$ be the subgroup of $(\gmu_{2}^{m})^{*}=\bigoplus_{i=1}^{m}(\Z/2\Z) e_{i}$ whose quotient is the character group $\gmu^{*}$ and let $s$ denote the number of ranks $n_{i}$ which are divisible by $4$. Then,
	\[ \Inv^{3}(G)_{\red}=(\Z/2\Z)^{s+l-l_{1}-l_{2}},\,\, \text{where}\]
	$l_{1}=\dim \langle e_{i}\,|\, e_{i}\in R\rangle$, $l_{2}=\dim \langle e_{i}+e_{j}\,|\, e_{i}+e_{j}\in R,\, e_{i}, e_{j}\not\in R,\, n_{i}\equiv n_{j}\equiv 1\mod 2\rangle$, and $l=\dim \big(R\cap (\bigoplus_{4\nmid n_{i}}(\Z/2\Z) e_{i})\big)$. In particular, if $n_{i}\equiv 0 \mod 2$ for all $1\leq i\leq m$, then 
	\[\Inv^{3}(G)_{\ind}=\Inv^{3}(G)_{\red}=(\Z/2\Z)^{s+l-l_{1}}.\]
\end{theorem}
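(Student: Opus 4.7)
The plan is to follow the template of the proof of Theorem \ref{mainthm} for type $B$, using the description of $Q(G)$ from Proposition \ref{QGpropC} and its proof, the calculation of $\Dec(G)$ in Proposition \ref{decGC}, and the reductivity criterion of \cite[Proposition 7.1]{LM}, which says that $q = \sum_{i=1}^{m} d_i q_i \in Q(G)$ is reductive indecomposable if and only if $|\bar{w}_{i,j}| \mid \theta_{i,j} d_i$ in $\Lambda/T^{*}$ for all $i, j$.

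The input data for type $C$ are: the squared lengths of the simple coroots are $\theta_{i,j} = 2$ for $1 \leq j \leq n_i - 1$ and $\theta_{i, n_i} = 1$ (since $\alpha_{i, n_i} = 2 e_{i, n_i}$ is the long simple root with short coroot $e_{i, n_i}$); and from the middle vertical map of the type-$C$ analogue of diagram (\ref{Tdiagram}), the fundamental weight $w_{i,j} = e_{i,1} + \cdots + e_{i, j}$ has image $\bar{j}\cdot e_i$ in $(\Z/2\Z)^m$, so $|\bar{w}_{i,j}| = 1$ when $j$ is even or when $j$ is odd and $e_i \in R$, and $|\bar{w}_{i,j}| = 2$ otherwise. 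Hence the LM criterion is automatic at $(i, j)$ for $j < n_i$, and the only nontrivial condition is that $d_i$ be even for every $i$ with $n_i$ odd and $e_i \notin R$. As in the type-$B$ argument, imposing this divisibility makes the first congruence $h_p \equiv 0 \pmod 2$ of (\ref{firsteqtypeC}) automatic, so $\Inv^{3}(G)_{\red}$ is represented by those $\sum d_i q_i$ with $d_i \in 2\Z$ for $n_i$ odd, $e_i \notin R$, and with $f_p(\delta_1 n_1 d_1, \ldots, \delta_m n_m d_m) \equiv 0 \pmod 4$.

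Modulo $\Dec(G)$ (Proposition \ref{decGC}), the indices with $e_i \in R$ contribute zero (since $\Z q_i \subset \Dec(G)$), each index with $4 \mid n_i$ and $e_i \notin R$ contributes a free $\Z/2\Z$ (giving $s - s_0$ such factors, where $s_0 = |\{i : 4 \mid n_i,\, e_i \in R\}|$), and the remaining variables $\bar{d}_i$ for $n_i \equiv 2 \pmod 4$, $e_i \notin R$, together with $\overline{d_i/2}$ for $n_i$ odd, $e_i \notin R$, are subject to the reduced linear condition $f_p|_V \equiv 0 \pmod 2$ on $V = \bigoplus_{4 \nmid n_i,\, e_i \notin R}(\Z/2\Z)e_i$. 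Moreover, the $\Dec(G)$-summand $2\Z q_r' = 2\Z(q_{a_r} + q_{b_r})$ for each of the $l_2$ generators of $R_2''$ acts on the $(d)$-block as the identification $\overline{d_{a_r}/2} = \overline{d_{b_r}/2}$, so the combined variable space is $(\Z/2\Z)^{|V|}$ modulo $l_2$ pair relations. Intersecting with $\ker(f_p|_V)$ gives dimension $\dim(R \cap V) - l_2$, and the direct sum decomposition $R \cap \bigoplus_{4 \nmid n_i}(\Z/2\Z)e_i = (R \cap V) \oplus \bigoplus_{4 \nmid n_i,\, e_i \in R}(\Z/2\Z)e_i$ gives $\dim(R \cap V) = l - (l_1 - s_0)$. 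Summing,
\[
\dim_{\Z/2\Z} \Inv^{3}(G)_{\red} = (s - s_0) + (l - l_1 + s_0) - l_2 = s + l - l_1 - l_2,
\]
as claimed. The final sentence of the theorem then follows: if every $n_i$ is even, the reductivity condition is vacuous (so $\Inv^{3}(G)_{\ind} = \Inv^{3}(G)_{\red}$) and $l_2 = 0$ automatically since the generators of $R_2''$ require both $n_i$ and $n_j$ odd.

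The main obstacle I expect is verifying rigorously that the $\Dec(G)$-relation $2 q_r' = 2 q_{a_r} + 2 q_{b_r}$ acts on the $(d)$-block precisely as the pair identification $\overline{d_{a_r}/2} = \overline{d_{b_r}/2}$, so that the $l_2$ arising from the pair/$f_p$ interaction matches the $l_2$ of the theorem. This requires a choice of $\Z$-basis of $\bigoplus_{n_i \text{ odd},\, e_i \notin R}\Z q_i$ of the shape given by Proposition \ref{decGC}, and careful tracking of how the mod $4$ congruences $f_p \equiv 0$ reduce to the mod $2$ congruences on $\bar{d}_i$ and $\overline{d_i/2}$ through this basis change.
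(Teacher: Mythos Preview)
Your proposal is correct and follows essentially the same route as the paper. Both arguments apply \cite[Proposition 7.1]{LM} to reduce the reductivity condition to $2\mid d_i$ for $n_i$ odd with $e_i\notin R$, observe that this forces the first congruence in (\ref{firsteqtypeC}) to hold automatically, and then compute the quotient of the resulting numerator by $\Dec(G)$ from Proposition \ref{decGC}. The paper carries out this last step by listing explicit generators of the numerator and matching them against the summands of $\Dec(G)$; you instead perform a direct $\Z/2\Z$-dimension count by separating the indices into the three blocks $\{e_i\in R\}$, $\{4\mid n_i,\ e_i\notin R\}$, and $V=\{4\nmid n_i,\ e_i\notin R\}$.

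One point of your writeup deserves tightening. The sentence ``the $\Dec(G)$-summand $2\Z q_r'$ acts on the $(d)$-block as the identification $\overline{d_{a_r}/2}=\overline{d_{b_r}/2}$'' is not quite the right picture: quotienting by $2(q_{a_r}+q_{b_r})$ identifies $(\overline{d_{a_r}/2},\overline{d_{b_r}/2})$ with $(\overline{d_{a_r}/2}+1,\overline{d_{b_r}/2}+1)$, so it kills the diagonal class $\bar e_{a_r}+\bar e_{b_r}$ rather than equating the two coordinates. The cleaner formulation is that the map $R'=R\cap V\to N_V/\Dec_V$, $r\mapsto\overline{\sum_i\epsilon_i r_i q_i}$, is surjective with kernel exactly $R_2''$ (one checks $R_2''\subset R'$ and that $\sum\epsilon_i r_i q_i\in\Dec_V$ forces $r^{(2)}=0$ and $r^{(\mathrm{odd})}\in R_2''$), giving the factor $(\Z/2\Z)^{\dim R'-l_2}$. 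Together with the observation that $R\cap\bigoplus_{4\nmid n_i}(\Z/2\Z)e_i$ splits as $(R\cap V)\oplus\bigoplus_{4\nmid n_i,\,e_i\in R}(\Z/2\Z)e_i$ (since any $r$ in the left side decomposes as $r=r|_{\{e_i\in R\}}+r|_V$ with both pieces in $R$), your dimension formula follows.
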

\begin{proof}
We apply arguments similar to the proof of type $B$. Let $\theta_{ij}$ be the square of the length of the coroot corresponding to the simple root of $i$th component of the root system of $G$. Then, we have
\begin{equation*}
\theta_{i,j}=\begin{cases}
1 & \text{ if } j=n_{i}\geq 1,\\
2 & \text{ otherwise.}
\end{cases}
\end{equation*}

Note that $|\bar{w}_{i, n_{i}}|=2$ if and only if $n_{i}$ is odd and the element $e_{i,n_{i}}$ has order $2$ in $\Lambda/T^{*}$. Moreover, by (\ref{charTC}) the latter is equivalent to $e_{i}\not\in R$. Hence, by \cite[Proposition 7.1]{LM} an indecomposable invariant of $G$  corresponding to $q=\sum_{i=1}d_{i}q_{i}\in Q(G)$ is reductive indecomposable if and only if $2|d_{i}$ for all odd $n_{i}$ such that $e_{i}\not\in R$. Therefore, any reductive indecomposable invariant of $G$ corresponding to $q=\sum_{i=1}d_{i}q_{i}\in Q(G)$ obviously satisfies the first equation of (\ref{firsteqtypeC}) and the second equation of (\ref{firsteqtypeC}) divided by $2$, i.e.,
\[f_{p}(\tfrac{\delta_{1}n_{1}d_{1}}{2},\ldots, \tfrac{\delta_{m}n_{m}d_{m}}{2})\equiv 0 \mod 2, \text{ where } \delta_{i}=\begin{cases}
1 & \text{ if } e_{i}\not\in R,\\
\tfrac{2}{\,\,n_{i}} & \text{ if } e_{i}\in R
\end{cases} \]
for all $1\leq p\leq k$, thus, 
\begin{equation}\label{redtypeC}
\Inv^{3}(G)_{\red}=\{\sum_{i=1}^{m}d_{i}q_{i}\,|\,  f_{p}(\tfrac{\delta_{1}n_{1}d_{1}}{2},\ldots, \tfrac{\delta_{m}n_{m}d_{m}}{2})\equiv 0 \mod 2  \}/\Dec(G).
\end{equation}

Let $R'=R\cap (\bigoplus_{4\nmid n_{i}, e_{i}\not\in R}(\Z/2\Z) e_{i})$, where $R$ is the subgroup of $\bigoplus_{i=1}^{m}(\Z/2\Z) e_{i}$ as in (\ref{relationB}). Then, we easily see that the group in the numerator of (\ref{redtypeC}) is generated by
\[\{q_{i}\,|\,e_{i}\in R  \text{ or } 4|n_{i}  \}\cup \{\sum_{i=1}^{m}\epsilon_{i}r_{i}q_{i}\,| r=(r_{i})\in R'\}\cup \{2\epsilon_{i}q_{i}\,|\, e_{i}\not\in R\},\, \epsilon_{i}=\begin{cases}
1 & \!\! \text{ if } 2| n_{i} ,\\
2 & \!\! \text{ if } 2\nmid n_{i}.
\end{cases}\]
Hence, the statement immediately follows from Proposition \ref{decGC}. If $n_{i}$ is even for all $i$, then the same argument together with Proposition \ref{QGpropC} shows the result for the group of indecomposable invariants. \end{proof}

\subsection{Type $D$}

\begin{theorem}\label{mainthmD}
Let $G=(\prod_{i=1}^{m}\gSpin_{2n_{i}})/\gmu$, $m\geq 1$, $n_{i}\geq 3$, where $\gmu$ is a central subgroup. Let $R$ be the subgroup of the character group $Z$ defined in $($\ref{centerD}$)$ such that $\gmu^{*}=Z/R$, $R_{1,i}^{}=R\cap Z_{i}$ for odd $n_{i}$, $R'_{1,i}=R\cap Z_{i}$ for even $n_{i}$, and let
\[\bar{R}=\{(\bar{r}_{1},\ldots, \bar{r}_{m})\in \bigoplus_{i=1}^{m}(\Z/2\Z)\bar{e}_{i}\,|\, \sum_{i=1}^{m}r_{i}\in R\}, r_{i}:=\begin{cases} 2\bar{r}_{i}e_{i} & \text{ if } n_{i} \text{ odd,}\\ \bar{r}_{i}e_{i,1}+\bar{r}_{i}e_{i,2} & \text{ if } n_{i} \text{ even},\end{cases}\] \begin{equation*}
 \text{ where }	Z:=\bigoplus_{i=1}^{m}Z_{i} \text{ with } Z_{i}=\begin{cases} 
		(\Z/4\Z)e_{i} & \text{ if } n_{i} \text{ odd},\\
		(\Z/2\Z)e_{i,1}\bigoplus (\Z/2\Z)e_{i,2} & \text{ if } n_{i} \text{ even}.
	\end{cases}
\end{equation*}
denote the character group of the center of $\prod_{i=1}^{m}\gSpin_{2n_{i}}$. Set
\begin{align*}
&R'=\bar{R}\cap \big(\bigoplus_{4\nmid n_{i}, R_{1,i}', R_{1,i}\neq Z_{i}}(\Z/2\Z)\bar{e}_{i}\big) \text{ with } l=\dim R', \, I_{1}=\{i\,|\, Z_{i}=R_{1,i} \text{ or } R_{1,i}', n_{i}\neq 3\},\\
&I_{2}=\{i \,|\,R_{1,i}'=0,  4|n_{i}\}\cup \{i\,|\, R_{1,i}'\!=\!(\Z/2\Z)e_{i,1} \text{ or } (\Z/2\Z)e_{i,2}, n_{i}\geq 6, 4|n_{i}\} \text{ with } s_{i}=|I_{i}|.\!\!
\end{align*}
Then, we have
\[ \Inv^{3}(G)_{\red}=(\Z/2\Z)^{s_{1}+s_{2}+l-l_{1}-l_{2}},\,\, \text{where}\]
$l_{1}=|\{i \,|\, 4\nmid n_{i}, R_{1,i}=2Z_{i} \text{ or } R_{1i}'=(\Z/2\Z)(e_{i,1}+e_{i,2})\}|$, and $l_{2}=\dim\langle \bar{e}_{i}+\bar{e}_{j}| 2e_{i}+2e_{j}\in R, R_{1,i}=R_{1,j}=0\rangle$.
\end{theorem}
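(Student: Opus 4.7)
The plan is to mirror the arguments for types $B$ and $C$ (Theorems \ref{mainthm} and \ref{mainthmC}), combining the description of $Q(G)$ from Section \ref{subD} with Proposition \ref{decGD} for $\Dec(G)$, and inserting the reductiveness criterion of \cite[Proposition 7.1]{LM}. Since type $D$ is simply laced, the square of the length of every coroot is $\theta_{i,j}=1$, so \cite[Proposition 7.1]{LM} says that an element $q=\sum_{i=1}^{m}d_{i}q_{i}\in Q(G)$ corresponds to a reductive indecomposable invariant if and only if $|\bar{w}_{i,j}|$ divides $d_{i}$ in $\Lambda/T^{*}$ for all $i,j$.

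First I would compute the orders $|\bar{w}_{i,j}|$ using the map (\ref{defaii}) and the description (\ref{charTD}). A short direct computation shows that $w_{i,2k}\in \Lambda_{r}$ for $1\le k\le \lfloor (n_{i}-2)/2\rfloor$, while the odd-index interior weights and the two end weights $w_{i,n_{i}-1}$, $w_{i,n_{i}}$ map to specific elements of $Z_{i}$: in particular, for odd $n_{i}$ the end weights generate $\Z/4\Z$, while for even $n_{i}$ they generate the two copies of $\Z/2\Z$. Passing to the quotient $\Lambda/T^{*}\cong Z/R$, these orders are controlled exactly by how $R_{1,i}$ (resp.\ $R_{1,i}'$) sits inside $Z_{i}$. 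The outcome is that the reductiveness condition on $d_{i}$ becomes: $d_{i}$ is free if $i\in I_{1}$; $d_{i}$ is divisible by $2$ if $i\in I_{2}$ or if $4\mid n_{i}$ and $R_{1,i}'$ contains $e_{i,1}+e_{i,2}$ only; $d_{i}$ is divisible by $4$ in the remaining cases; and similarly for even $n_{i}$.

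Next I would combine this with the conditions (\ref{equifirstD}), (\ref{secondconditionre}), and (\ref{firsteqtypeDprime}) that cut out $Q(G)$. Once the reductivity divisibilities are imposed, (\ref{firsteqtypeDprime}) becomes automatic, and (\ref{equifirstD}) collapses to the ``mod $2$'' version of the defining relations for $R$, but read through the coordinate-wise reduction $Z\twoheadrightarrow \bigoplus(\Z/2\Z)\bar{e}_{i}$; this is precisely the subgroup $\bar{R}$, and intersecting with the indices where the reductivity divisibility is genuinely $2$ (not $4$) yields $R'$. Condition (\ref{secondconditionre}) for pairs $(p,u)$ in which both $\delta_{p}=\delta_{u}=1$ (the odd-rank case with trivial $R_{1,i}$) survives and produces exactly the relations $\bar{e}_{i}+\bar{e}_{j}$ with $2e_{i}+2e_{j}\in R$ and $R_{1,i}=R_{1,j}=0$ appearing in the definition of $l_{2}$.

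Finally I would read off a generating set for the subgroup of reductive elements of $Q(G)$: one generator $q_{i}$ for each $i\in I_{1}$; one generator $2q_{i}$ (playing the role of the Garibaldi--Parimala--Tignol analogue $\Delta_{i}'$) for each $i\in I_{2}$; and for each $\bar{r}=(\bar{r}_{i})\in R'$ the element $\sum_{i}\varepsilon_{i}\bar{r}_{i}q_{i}$ with $\varepsilon_{i}\in\{1,2\}$ determined by the parity of $n_{i}$. Quotienting by $\Dec(G)$ as described in Proposition \ref{decGD} kills: the $I_{1}$-generators corresponding to $R_{1,i}=2Z_{i}$ or $R_{1,i}'=(\Z/2\Z)(e_{i,1}+e_{i,2})$ with $4\nmid n_{i}$ (giving the $-l_{1}$ contribution); and the $R'$-generators that lie in the span of $\{q_{i}+q_{j}\}$ already in $\Dec(G)$ (giving the $-l_{2}$ contribution). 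The remaining generators are linearly independent modulo $2$, yielding $\Inv^{3}(G)_{\red}=(\Z/2\Z)^{s_{1}+s_{2}+l-l_{1}-l_{2}}$.

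The main obstacle will be the detailed case analysis in step two: type $D$ has three genuinely distinct regimes ($n_{i}$ odd, $n_{i}\equiv 2\pmod 4$, and $4\mid n_{i}$), and within each the order of each $\bar{w}_{i,j}$ depends on how $R_{1,i}$ or $R_{1,i}'$ sits inside $Z_{i}$. Keeping this bookkeeping straight, particularly showing that condition (\ref{secondconditionre}) for mixed pairs (one of $\delta_{p},\delta_{u}$ equal to $2$) does not impose further restrictions beyond those already encoded by the reductivity divisibilities and by $\bar{R}$, is the technical heart of the argument.
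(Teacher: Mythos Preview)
Your overall strategy matches the paper's exactly: invoke \cite[Proposition~7.1]{LM} with $\theta_{i,j}=1$, combine with the $Q(G)$ analysis of Section~\ref{subD}, and quotient by $\Dec(G)$ from Proposition~\ref{decGD}. Two specific pieces of your bookkeeping are off, however.

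First, condition~(\ref{secondconditionre}) does \emph{not} survive after imposing reductivity; the paper shows it is automatically satisfied. Whenever $c_i(p)c_i(u)\equiv\pm1\pmod4$, one of $c_i(p),c_i(u)$ is odd, which forces $R_{1,i}=0$ and hence $|\bar{w}_{i,n_i}|=4$; reductivity then gives $4\mid d_i$, so each term in (\ref{secondconditionre}) vanishes $\bmod\,8$. The same happens for the second sum and for the mixed case $\delta_p\delta_u=2$. Thus (\ref{secondconditionre}) imposes nothing further on the numerator, and the $l_2$ correction arises entirely from $\Dec(G)$: for $\bar{e}_i+\bar{e}_j\in R'$ with $R_{1,i}=R_{1,j}=0$ and $n_i,n_j$ odd, the corresponding generator $4(q_i+q_j)$ lies in the summand $\bigoplus 4\Z q_r'$ of Proposition~\ref{decGD}. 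You do say this correctly in your final paragraph, so the middle paragraph may just be loose wording, but as written it suggests a constraint on the numerator that is not there.

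Second, and more seriously, the indices contributing to $l_1$ are \emph{not} in $I_1$. By definition $I_1$ requires $R_{1,i}$ (resp.\ $R_{1,i}'$) to equal all of $Z_i$, whereas the $l_1$ indices have $R_{1,i}=2Z_i\subsetneq Z_i$ or $R_{1,i}'=(\Z/2\Z)(e_{i,1}+e_{i,2})\subsetneq Z_i$. For such $i$ one has $\bar{e}_i\in R'$, so these indices sit among your $R'$-type generators (contributing to $l$); the reductivity divisibility is $2\mid d_i$, the generator is $2q_i$, and this already lies in $\Dec(G)$ (in the $I_1'$-summand of Proposition~\ref{decGD}). That is why $l_1$ is subtracted from $l$, not from $s_1$. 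With your attribution, every $q_i$ for $i\in I_1$ would survive with order $2$ modulo $\Dec(G)$ (which is correct: none of them die), and you would have no mechanism to account for the $-l_1$ term.
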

\begin{proof}
Let $Z$ denote the character group of the center of $\prod_{i=1}^{m}\gSpin_{2n_{i}}$ as in (\ref{centerD}). Let $\gmu$ be a central subgroup such that $\gmu\simeq (\gmu_{2})^{k_{1}}\times (\gmu_{4})^{k_{2}}$ for some $k_{1}, k_{2}\geq 0$ and let $R=\{ r\in Z\,|\, f_{p}(r)=0, 1\leq p\leq k\}$
be the subgroup of $Z$ such that $\gmu^{*}\simeq Z/R$ for some linear polynomials $f_{p}\in \Z/4\Z[T_{1},\ldots, T_{m}]$ with $k=k_{1}+k_{2}$. We shall use the description of $Q(G)$ in Section \ref{subD}. 

Let $\theta_{i,j}$ denote the square of the length of the $j$th coroot of the $i$th component of the root system of $G$. Then, $\theta_{i,j}=1$ for all $1\leq i\leq m$ and $1\leq j\leq n_{i}$. Note that the order of the fundamental weight $w_{i,j}$ in $\Lambda/T^{*}$ is trivial for all $j$ if and only if 
\[Z_{i}=\begin{cases} R_{1,i} & \text{ if } n_{i} \text{ odd,}\\ R_{1,i}' & \text{ if } n_{i} \text{ even.}\end{cases}  \]
Moreover, if $c_{i}(p)=\pm 1$ for some $1\leq p\leq k$, where $c_{i}(p)$ denotes the coefficient of $t_{i}$ in $f_{p}$, then $R_{1i}=0$, thus by (\ref{charTD}) $2w_{i, n_{i}}\not\in T^{*}$, i.e., $|\bar{w}_{i,n_{i}}|=4$. Hence, by \cite[Proposition 7.1]{LM} any reductive indecomposable invariant of $G$ corresponding to $q=\sum_{i=1}d_{i}q_{i}\in Q(G)$ satisfies (\ref{firsteqtypeDprime}) and (\ref{secondconditionre}). Therefore, it follows by (\ref{equifirstD}) that
\begin{equation}\label{redtypeD}
\Inv^{3}(G)_{\red}=\frac{\{\sum_{i=1}^{m}d_{i}q_{i}\,|\,  \bar{f}_{p}(\epsilon_{1} d_{1},\cdots, \epsilon_{m}d_{m})\equiv 0 \mod 2  \}}{\Dec(G)}
\end{equation}
where, $\bar{f}_{p}\in \Z/2\Z[t_{1},\ldots, t_{m}]$ denotes the image of $f_{p}$ under the following map
\[\Z/4\Z[T]\to \Z/4\Z[t_{1},\ldots, t_{m}]\to \Z/2\Z[t_{1},\ldots, t_{m}] \text{ given by } 2t_{i1}, 2t_{i2}\mapsto t_{i}, t_{i}\mapsto t_{i}\]
\[\text{ and } \epsilon_{i}=\begin{cases} 1 & \text{ if } Z_{i}=R_{1,i} \text{ or }R_{1,i}',\\ \tfrac{1}{2} & \text{ if } c_{i}(p)=2 \text{ or }  c_{i1}(p)+c_{i2}(p)=4,\\ \tfrac{n_{i}}{4} & \text{ otherwise.}\end{cases}\]

Let $\bar{R}=\{\bar{r}=(\bar{r}_{1},\ldots, \bar{r}_{m})\in \bigoplus_{i=1}^{m}(\Z/2\Z)\bar{e}_{i}\,|\, \bar{f}_{p}(\bar{r})\equiv 0 \mod 2\}$, equivalently
\[\bar{R}=\{(\bar{r}_{1},\ldots, \bar{r}_{m})\in (\Z/2\Z)^{m}\,|\, \sum_{i=1}^{m}r_{i}\in R\}, \text{ where } r_{i}:=\begin{cases} 2\bar{r}_{i}e_{i} & \text{ if } n_{i} \text{ odd,}\\ \bar{r}_{i}e_{i,1}+\bar{r}_{i}e_{i,2} & \text{ if } n_{i} \text{ even}\end{cases} \]
and let $R'=\bar{R}\cap \big(\bigoplus_{4\nmid n_{i}, R_{1,i}', R_{1,i}\neq Z_{i}}(\Z/2\Z)\bar{e}_{i}\big)$. Observe that $\bar{f}_{p}(\bar{e}_{i})\equiv 0 \mod 2$ for all $p$ with $n_{i}$ odd if and only if either $c_{i}(p)=0$ or $2$ for all $p$ (i.e., $f_{p}(e_{i})\equiv 0$ or $f_{p}(2e_{i})\equiv 0 \mod 4$, respectively) and this, in turn, is equivalent to $R_{1,i}=Z_{i}$ or $2Z_{i}$. Similarly, $\bar{f}_{p}(\bar{e}_{i})\equiv 0 \mod 2$ for all $p$ with $n_{i}$ even if and only if either $c_{i1}(p)=c_{i2}(p)=0$ or $c_{i1}(p)=c_{i2}(p)=2$ for all $p$ (i.e., $f_{p}(e_{i1})\equiv f_{p}(e_{i2})\equiv 0$ or $f_{p}(e_{i1}+e_{i2})\equiv 0 \mod 4$, respectively) and this, in turn, is equivalent to $R_{1,i}'=Z_{i}$ or $(\Z/2\Z)(e_{i1}+e_{i2})$. Therefore, we see that the group in the numerator of (\ref{redtypeD}) is generated by
\begin{align*}
&\{2q_{i}\,|\, R_{1,i}=2Z_{i}\text{ or } R_{1,i}'=(\Z/2\Z)(e_{i1}+e_{i2})\text{ or } e_{i,1}+e_{i,2}\not\in R'_{1,i}, 4|n_{i}  \}\cup \{8q_{i}\,|\, R_{1,i}=0\}\\
&\cup \{q_{i}\,|\, Z_{i}=R_{1,i} \text{ or } R'_{1,i}\}\cup \{4q_{i}\,|\, e_{i,1}+e_{i,2}\not\in R_{1,i}', 4\nmid n_{i}  \}\cup  \{\sum_{i=1}^{m}\epsilon_{i}'r_{i}'q_{i}\}
\end{align*}
for all $r'=(r_{1}',\ldots, r_{m}')\in R'\backslash \{\bar{e}_{i}\,|\, 1\leq i\leq m\}$, where $\epsilon_{i}'= 2$ (resp. $4$)  if $n_{i}$ is even (resp. odd). Therefore, the statement immediately follows by Proposition \ref{decGD}.\end{proof}

\section{Unramified invariants for semisimple groups $G$ of types $B$, $C$, $D$}

In this section, we first describe torsors for the corresponding reductive groups in Lemmas \ref{honegredprop}, \ref{honegredpropC}, and \ref{honegredpropD}. Then, using this together with Theorems \ref{mainthm}, \ref{mainthmC}, and \ref{mainthmD}, we present a complete description of the corresponding cohomological invariants in Propositions \ref{formcor}, \ref{formcorC}, and \ref{formcorD}. Finally, using such descriptions, we show that there are no nontrivial unramified degree $3$ invariants for semisimple groups of types $B$, $C$, and $D$ (see Theorems \ref{secthm}, \ref{secthmC}, \ref{secthmD}). In this section, we assume that the base field $F$ is of characteristic $0$. We shall write $\langle a_{1},\ldots, a_{n}\rangle=\langle a_{1}\rangle \perp \cdots \perp \langle a_{n}\rangle$ for the diagonal quadratic form $a_{1}x_{1}^{2}+\cdots +a_{n}x_{n}^{2}$ and write $\langle\langle a_{1},\ldots, a_{n} \rangle\rangle=\langle 1, -a_{1}\rangle\tens \cdots \tens\langle 1, -a_{n}\rangle $ for the $n$-fold Pfister form.

\subsection{Type $B$}

\begin{lemma}\label{honegredprop}
Let $G=(\prod_{i=1}^{m}\gSpin_{2n_{i}+1})/\gmu$, $m, n_{i}\geq 1$, where $\gmu$ is a central subgroup. Let $R$ be the subgroup of $(\gmu_{2}^{m})^{*}=(\Z/2\Z)^{m}$ whose quotient is the character group $\gmu^{*}$. Set $G_{\red}=(\prod_{i=1}^{m}\gGamma_{2n_{i}+1})/\gmu$, where $\gGamma_{2n_{i}+1}$ is the split even Clifford group. Then, for any field extension $K/F$ the first Galois cohomology set $H^{1}(K, G_{\red})$ is bijective to the set of $m$-tuples of quadratic forms $(\phi_{1},\ldots, \phi_{m})$ with $\dim \phi_{i}=2n_{i}+1$, $\disc\phi_{i}=1$ such that for all $r=e_{i_{1}}+\cdots +e_{i_{s}}\in R$, $i_{1}<\cdots<i_{s}$,
\begin{equation}\label{conditionI3}
I^{3}(K)\ni\begin{cases}
\perp_{p=1}^{s} (-1)^{p}\phi_{i_{p}} & \text{ if } s \text{ is even},\\
(\perp_{p=1}^{s} (-1)^{p}\phi_{i_{p}})\perp \langle 1\rangle  & \text{ otherwise,}
\end{cases}
\end{equation}
where $\disc\phi_{i}$ denotes the discriminant of $\phi_{i}$ and $I^{3}(K)$ denotes the cubic power of the fundamental ideal $I(K)$ in the Witt ring of $K$.
\end{lemma}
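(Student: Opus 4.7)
The plan is to realize $G_{\red}$ as a central extension whose first Galois cohomology is accessible via Clifford invariants of quadratic forms. The starting observation is that $\gGamma_{2n+1} = \gm \cdot \gSpin_{2n+1}$ with $\gm \cap \gSpin_{2n+1} = \gmu_2 = Z(\gSpin_{2n+1})$, so the pair (spinor norm, vector representation) gives an isomorphism $\gGamma_{2n+1}/\gmu_2 \cong \gm \times \gO^+_{2n+1}$. Taking the product over $i$ and passing to the further quotient by $\gmu \subseteq \gmu_2^m$, setting $A := \gmu_2^m/\gmu$ produces the central extension
\[
1 \to A \to G_{\red} \to \gm^m \times \prod_{i=1}^{m} \gO^+_{2n_i+1} \to 1.
\]

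Applying the long exact cohomology sequence with Hilbert 90 ($H^1(K,\gm) = 0$) gives
\[
H^1(K, A) \to H^1(K, G_{\red}) \to \prod_{i=1}^{m} H^1(K, \gO^+_{2n_i+1}) \xrightarrow{\partial} H^2(K, A).
\]
Since $A$ sits inside the central split torus $Z(G_{\red}) = \gm^m/\gmu$, the first map factors through $H^1(K, Z(G_{\red})) = 0$, so $H^1(K, G_{\red})$ injects into the product with image exactly $\ker \partial$. Because $H^1(K, \gO^+_{2n_i+1})$ classifies $(2n_i+1)$-dimensional quadratic forms of trivial discriminant, the problem reduces to computing $\partial$ and matching its vanishing with the stated $I^3$-condition.

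To identify $\partial$, I would invoke functoriality with respect to the parallel sequence $1 \to \gmu_2^m \to \prod \gGamma_{2n_i+1} \to \gm^m \times \prod \gO^+_{2n_i+1} \to 1$, whose connecting map is componentwise the Clifford invariant $\phi_i \mapsto c(\phi_i) \in \Br_2(K) = H^2(K, \gmu_2)$. Cartier duality gives $A^* = R$, so $H^2(K, A) \cong \Hom(R, \Br_2(K))$, and the quotient $\gmu_2^m \twoheadrightarrow A$ induces on $H^2$ the restriction along $R \hookrightarrow (\gmu_2^m)^*$. Combining these, the $r$-component of $\partial(\phi_1, \ldots, \phi_m)$ for $r = e_{i_1} + \cdots + e_{i_s} \in R$ equals $\sum_{p=1}^{s} c(\phi_{i_p}) \in \Br_2(K)$.

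Finally, I would translate ``$\sum_p c(\phi_{i_p}) = 0$'' into the claimed $I^3$-membership. A direct computation of signed discriminants---exploiting the alternating sign pattern $(-1)^p$ and the added $\langle 1 \rangle$ when $s$ is odd---shows that the form $\psi$ in (\ref{conditionI3}) has even dimension and trivial signed discriminant, hence $\psi \in I^2(K)$. Additivity of $e_2$ on $I^2(K)$ together with the base identity $e_2(-\phi \perp \langle 1\rangle) = c(\phi)$ for odd-dimensional $\phi$ of trivial discriminant (verified in dimension $3$ by recognizing a $2$-fold Pfister form and extended by cancellation in the Witt ring) yields $e_2(\psi) = \sum_p c(\phi_{i_p})$. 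Since $I^3(K) = \ker(e_2|_{I^2(K)})$ by Merkurjev's theorem, this equates the two conditions. The main technical obstacle is the sign bookkeeping in this final step: ensuring that the specific pattern $(-1)^p$ with the $\langle 1\rangle$-correction produces exactly the cancellations that make $\psi \in I^2$ and that force the $e_2$-formula to reduce to the clean sum with no residual cross-terms.
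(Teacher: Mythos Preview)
Your proposal is correct and follows essentially the same route as the paper. The only cosmetic difference is that the paper uses the central extension with torus kernel $(\gm)^{m}/\gmu$ and then factors the connecting map through $H^{2}(K,\gmu_{2}^{m})$, whereas you work directly with the finite kernel $A=\gmu_{2}^{m}/\gmu$; both reduce to the identical condition $\sum_{p} C_{0}(\phi_{i_{p}})=0$ in $\Br_{2}(K)$, and your $e_{2}$/Merkurjev translation is exactly the paper's Clifford-algebra identities plus \cite[Theorem~14.3]{EKM} in different packaging.
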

\begin{proof}
Let $G_{\red}=(\prod_{i=1}^{m}\gGamma_{2n_{i}+1})/\gmu$, where $\gGamma_{2n_{i}+1}$ denotes the split even Clifford group. Consider the natural exact sequence
\[1\to (\gm)^{m}/\gmu \to G_{\red}\to \prod_{i=1}^{m}\gO^{+}_{2n_{i}+1}\to 1.\]
Then, by Hilbert Theorem $90$ and \cite[Proposition 42]{Serre}, this sequence yields a bijection between the set $H^{1}(F,G_{\red})$ and the kernel of the connecting map which factors as
\[H^{1}(F, \prod_{i=1}^{m}\gO^{+}_{2n_{i}+1})\stackrel{}\to H^{2}(F, (\gmu_{2})^{m})=\Br_{2}(F)^m\stackrel{\tau}\to H^{2}(F,(\gmu_{2})^{m}/\gmu),\] 
where the first map sends an $m$-tuple of quadratic forms $(\phi_{1},\ldots, \phi_{m})$ with $\dim \phi_{i}=2n_{i}+1$, $\disc(\phi_{i})=1$ to the $m$-tuple $(C_{0}(\phi_{1}), \ldots, C_{0}(\phi_{m}))$ of even Clifford algebras $C_{0}(\phi_{i})$ associated to $\phi_{i}$ and the map $\tau$ is induced by the natural surjection $(\gmu_{2})^{m}\to (\gmu_{2})^{m}/\gmu$. Since $(C_{0}(\phi_{1}), \ldots, C_{0}(\phi_{m}))\in \Ker(\tau)$ if and only if it is contained in the kernel of the composition
\begin{equation}\label{Bdescrip}
H^{2}(F, (\gmu_{2})^{m})\stackrel{\tau}\to H^{2}(F,(\gmu_{2})^{m}/\gmu)\stackrel{r_{*}}\to H^{2}(F, \gm)
\end{equation}
for all $r\in R=((\gmu_{2})^{m}/\gmu)^{*}$, we have
\begin{equation}\label{HoneGred}
H^{1}(F, G_{\red})\simeq \{(\phi_{1},\ldots, \phi_{m})\,|\,\dim \phi_{i}=2n_{i}+1, \disc\phi_{i}=1, \sum_{i=1}^{m}r_{i}C_{0}(\phi_{i})=0         \}   
\end{equation}
for all $r=(r_{i})\in R$.   

Write an element $r\in R$ as $r=e_{i_{1}}+\cdots +e_{i_{s}}$ for some $i_{1}<\cdots <i_{s}$, so that the condition $\sum_{i=1}^{m}r_{i}C_{0}(\phi_{i})=0$ in (\ref{HoneGred}) is equal to $\sum_{p=1}^{s} C_{0}(\phi_{i_{p}})=0$. Assume that $s$ is even. Since $\disc(-\phi_{i_{p}}\perp\phi_{i_{p+1}})=1$ for any $1\leq p\leq s/2$,
\[C_{0}(\psi)=C_{0}(-\psi), \text{ and } C_{0}(\phi)+C_{0}(\phi')=C(\phi\perp \phi')\]
for any quadratic form $\psi$ and any odd-dimensional quadratic forms $\phi$ and $\phi'$, where $C(\phi\perp \phi')$ is the corresponding Clifford algebra, we have
\[0=\sum_{p=1}^{s} C_{0}(\phi_{i_{p}})=C(-\phi_{i_{1}}\perp\phi_{i_{2}}\perp\cdots \perp-\phi_{i_{s-1}}\perp\phi_{i_{s}}),\]
which is equivalent to $(-\phi_{i_{1}}\perp\phi_{i_{2}})\perp\cdots \perp(-\phi_{i_{s-1}}\perp\phi_{i_{s}})\in I^{3}(F)$ by \cite[Theorem 14.3]{EKM}. Now we assume that $s$ is odd. Since $C_{0}(\phi\perp \langle 1 \rangle)=C_{0}(\phi)$ for any odd-dimensional quadratic form $\phi$ and $\disc(-\phi_{i_{s}}\perp \langle 1\rangle)=1$, the same argument shows that
$(-\phi_{i_{1}}\perp\phi_{i_{2}})\perp\cdots \perp(-\phi_{i_{s-2}}\perp\phi_{i_{s-1}})\perp (-\phi_{i_{s}}\perp\langle 1 \rangle)\in I^{3}(F)$.\end{proof}
\begin{remark}\label{remarkforsimple}
If we assume that $-1\in (F^{\times})^{2}$, then the condition (\ref{conditionI3}) in Lemma \ref{honegredprop} can be simplified without sign changes as follows:
\begin{equation*}
H^{1}(K, G_{\red})\simeq \{\phi:=(\phi_{1},\ldots, \phi_{m})\,|\,\dim \phi_{i}=2n_{i}+1, \disc\phi_{i}=1, \phi[r]\in I^{3}(K)         \}   
\end{equation*}
for all $r=(r_{i})\in R$, where 
\[\phi[r]:=\begin{cases}
	\perp_{i=1}^{m} r_{i}\phi_{i} & \text{ if } \sum_{i=1}^{m}r_{i}\equiv 0 \mod 2,\\
	(\perp_{i=1}^{m} r_{i}\phi_{i})\perp \langle 1\rangle  & \text{ otherwise.}
	\end{cases}
\]
\end{remark}

\begin{proposition}\label{formcor}
Let $G=(\prod_{i=1}^{m}\gSpin_{2n_{i}+1})/\gmu$ defined over an algebraically closed field $F$, where $m, n_{i}\geq 1$, $\gmu$ is a central subgroup. Let $R$ be the subgroup of $(\gmu_{2}^{m})^{*}$ whose quotient is the character group $\gmu^{*}$. Set $G_{\red}=(\prod_{i=1}^{m}\gGamma_{2n_{i}+1})/\gmu$, where $\gGamma_{2n_{i}+1}$ is the split even Clifford group. Then, every normalized invariant in $\Inv^{3}(G_{\red})$ is of the form $\boldsymbol{\mathrm{e}}_{3}(\phi[r])$ for some $r\in R$, where $\phi[r]$ is the quadratic form defined in Remark \ref{remarkforsimple} and $\boldsymbol{\mathrm{e}}_{3}: I^{3}(K)\to H^{3}(K)$ denotes the Arason invariant over a field extension $K/F$.  Moreover, we have
\begin{equation}\label{formcoreqa}
\Inv^{3}(G_{\red})_{\norm}\simeq \frac{R}{\langle e_{i},\, e_{j}+e_{k}\in R\,|\, e_{j}, e_{k}\not\in R,\, n_{i}\leq 2,\, n_{j}=n_{k}=1\rangle}.
\end{equation}
\end{proposition}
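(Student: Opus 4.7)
The plan is to construct a homomorphism $\Phi\colon R\to\Inv^{3}(G_{\red})_{\norm}$ sending $r\mapsto\boldsymbol{\mathrm{e}}_{3}(\phi[r])$, show its kernel contains the subspace claimed in (\ref{formcoreqa}) by a dimension argument based on the Arason--Pfister Hauptsatz, and then compare orders using Theorem~\ref{mainthm} to conclude bijectivity. Since $F$ is algebraically closed, every field extension $K/F$ contains $\sqrt{-1}$, so Remark~\ref{remarkforsimple} applies and $G_{\red}$-torsors over $K$ correspond to $m$-tuples $\phi=(\phi_{1},\ldots,\phi_{m})$ with $\dim\phi_{i}=2n_{i}+1$, $\disc\phi_{i}=1$ and $\phi[r]\in I^{3}(K)$ for every $r\in R$. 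Hence the rule $\phi\mapsto\boldsymbol{\mathrm{e}}_{3}(\phi[r])$ defines a normalized degree $3$ invariant of $G_{\red}$ for each $r\in R$, and additivity of $\boldsymbol{\mathrm{e}}_{3}$ makes $r\mapsto [\phi\mapsto\boldsymbol{\mathrm{e}}_{3}(\phi[r])]$ a group homomorphism $\Phi$.

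To bound the kernel, I will observe that if $r=e_{i_{1}}+\cdots+e_{i_{s}}\in R$ with $i_{1}<\cdots<i_{s}$, then $\dim\phi[r]=2(n_{i_{1}}+\cdots+n_{i_{s}})+s$ when $s$ is even and $2(n_{i_{1}}+\cdots+n_{i_{s}})+s+1$ when $s$ is odd. Since any element of $I^{3}(K)$ of dimension less than $8$ is hyperbolic by Arason--Pfister, $\boldsymbol{\mathrm{e}}_{3}(\phi[r])$ vanishes identically whenever $\dim\phi[r]<8$. A straightforward case analysis shows this happens exactly when $s=1$ with $n_{i_{1}}\leq 2$, or when $s=2$ with $n_{i_{1}}=n_{i_{2}}=1$; for $s\geq 3$ one always has $\dim\phi[r]\geq 10$. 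Consequently the subspace $K_{1}:=\langle e_{i}\in R\mid n_{i}\leq 2\rangle+\langle e_{j}+e_{k}\in R\mid e_{j},e_{k}\notin R,\,n_{j}=n_{k}=1\rangle$ is contained in $\Ker\Phi$, and $\Phi$ factors through $\bar\Phi\colon R/K_{1}\to\Inv^{3}(G_{\red})_{\norm}$.

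Over an algebraically closed field $H^{3}(F)=0$ and there are no non-constant degree $1$ invariants, so Section~\ref{CIThree} gives $\Inv^{3}(G_{\red})_{\norm}=Q(G_{\red})/\Dec(G_{\red})=\Inv^{3}(G_{\red})_{\ind}$; by the definition of $\Inv^{3}(G)_{\red}$ as the image of the injective restriction $\Inv^{3}(G_{\red})_{\ind}\hookrightarrow\Inv^{3}(G)_{\ind}$, this equals $\Inv^{3}(G)_{\red}$, which by Theorem~\ref{mainthm} has cardinality $2^{m-k-l_{1}-l_{2}}$. A direct check shows that $K_{1}$ is the internal direct sum of its two defining subspaces (the first is spanned by single basis vectors $e_{i}$, the second by sums $e_{j}+e_{k}$ in which neither summand lies in $R$, so the two span linearly disjoint subspaces of $(\Z/2\Z)^{m}$), giving $\dim_{\Z/2\Z}(R/K_{1})=(m-k)-l_{1}-l_{2}$. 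Therefore $\bar\Phi$ is a homomorphism between finite $\Z/2\Z$-vector spaces of the same dimension, and bijectivity reduces to surjectivity.

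I expect the main obstacle to be this last surjectivity step. My plan is to match $\bar\Phi$ against the canonical isomorphism $\Inv^{3}(G_{\red})_{\norm}\simeq Q(G_{\red})/\Dec(G_{\red})$ of Section~\ref{CIThree}: for each $r\in R$, identify the character $\lambda_{r}\in T^{*}$ whose Chern class $c_{2}(\lambda_{r})$ represents $\boldsymbol{\mathrm{e}}_{3}(\phi[r])$ under this isomorphism, and then verify using the explicit generators of $\Dec(G_{\red})$ from Proposition~\ref{decG} that as $r$ runs over a $\Z/2\Z$-basis of a complement of $K_{1}$ in $R$, the resulting classes $[c_{2}(\lambda_{r})]$ are linearly independent in $Q(G_{\red})/\Dec(G_{\red})$. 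Equivalently one may prove injectivity of $\bar\Phi$ directly by exhibiting, for each $r\notin K_{1}$, a versal $G_{\red}$-torsor on which $\boldsymbol{\mathrm{e}}_{3}(\phi[r])$ is non-zero; this reduces to a standard non-triviality statement for the Arason invariant of a generic form of dimension at least $8$ in $I^{3}$, which can be handled by Pfister-form specialisation.
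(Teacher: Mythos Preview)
Your overall strategy coincides with the paper's: define $\Phi(r)=\boldsymbol{\mathrm{e}}_{3}(\phi[r])$, show $K_{1}\subseteq\Ker\Phi$ via Arason--Pfister, use Theorem~\ref{mainthm} to see that $|R/K_{1}|=|\Inv^{3}(G_{\red})_{\norm}|$, and reduce to showing $\Ker\Phi=K_{1}$. The paper carries out this last step differently and more concretely: it restricts along the standard embeddings $\gGamma_{3}\hookrightarrow\gGamma_{2n_{i}+1}$ to reduce to $G'_{\red}=(\gGamma_{3})^{m}/\gmu$, then invokes the exceptional isomorphism $A_{1}=B_{1}$ and \cite[Lemma~4.3]{Mer17} for the case where every $r\in R$ has at least three nonzero components, and uses the explicit torsor constructions of Lemma~\ref{lemmaunrami} for the remaining cases.

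Your two proposed routes for the last step are plausible but not yet proofs. The Chern-class matching would require identifying, for each $r$, the class of $\boldsymbol{\mathrm{e}}_{3}(\phi[r])$ inside $Q(G_{\red})/\Dec(G_{\red})$, which amounts to a separate computation for the reductive (not semisimple) group and is not supplied by Section~\ref{CIThree} or Proposition~\ref{decG}. The ``generic form in $I^{3}$ of dimension $\geq 8$'' heuristic is where care is needed: the tuple $(\phi_{1},\ldots,\phi_{m})$ is not free --- it must satisfy $\phi[r']\in I^{3}(K)$ simultaneously for \emph{all} $r'\in R$ --- so one cannot simply specialise $\phi[r]$ to a generic $3$-Pfister form without checking the other constraints. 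The paper's Lemma~\ref{lemmaunrami} handles exactly this by building compatible $m$-tuples over Laurent series fields and computing residues; if you intend to go the specialisation route, that lemma (or an equivalent explicit construction) is the missing ingredient.
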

\begin{proof}
Observe that $\Inv^{3}(G_{\red})_{\norm}=\Inv^{3}(G_{\red})_{\ind}$ as $F$ is algebraically closed. Since $\phi[r]\in I^{3}(K)$ for any $r\in R$, the Arason invariant gives a normalized invariant of $G_{\red}$ of order dividing $2$ that sends an $m$-tuple $\phi\in H^{1}(K, G_{\red})$ to $\boldsymbol{\mathrm{e}}_{3}(\phi[r])\in H^{3}(K)$.

Let $r\in R_{1}'+R_{2}'$, where $R_{1}'$ and $R_{2}'$ denote the subgroups of $R$ defined in Proposition \ref{decG}. Then, as every $4$ and $6$-dimensional quadratic forms in $I^{3}(K)$ are hyperbolic, the invariant $\boldsymbol{\mathrm{e}}_{3}(\phi[r])$ vanishes.

Now we show that the invariant $\boldsymbol{\mathrm{e}}_{3}(\phi[r])$ is nontrivial for any $r\in R\backslash (R_{1}'+R_{2}')$. Let $G'_{\red}=(\gGamma_{3})^{m}/\gmu$. If $R$ is a subgroup such that every element $r$ in $R$ has at least $3$ nonzero components, then by \cite[Lemma 4.3]{Mer17} and the exceptional isomorphism $A_{1}=B_{1}$, any invariant of $G'_{\red}$ is nontrivial. Hence, it follows from the map $$\Inv^{3}(G_{\red})\to \Inv^{3}(G'_{\red})$$ induced by the standard embedding $\gGamma_{3}\to \gGamma_{2n_{i}+1}$ that every invariant $\boldsymbol{\mathrm{e}}_{3}(\phi[r])$ is nontrivial. Otherwise, by the proof of Lemma \ref{lemmaunrami} each invariant $\boldsymbol{\mathrm{e}}_{3}(\phi[r])$ is nontrivial, thus the statements follow from Theorem \ref{mainthm}.\end{proof}

Recall from Section \ref{QG} the following subgroups of $R$.
\begin{equation*}
R_{1}=\langle e_{i}\in R\rangle \text{ and } R_{2}=\langle e_{i}+e_{j}\in R\,|\,  e_{i}, e_{j}\not\in R_{1}\rangle.
\end{equation*}
We shall need the following key lemma.

\begin{lemma}\label{lemmaunrami}
Let $G=(\prod_{i=1}^{m}\gSpin_{2n_{i}+1})/\gmu$ defined over an algebraically closed field $F$, where $m, n_{i}\geq 1$, $\gmu$ is a central subgroup. Set $G_{\red}=(\prod_{i=1}^{m}\gGamma_{2n_{i}+1})/\gmu$. Then, every normalized invariant in $\Inv^{3}(G_{\red})$ is ramified if either $n_{i}\geq 3$ for some $i$ with $e_{i}\in R_{1}$ or $n_{j}+n_{k}\geq 3$ for some $j$ and $k$ such that $e_{j}+e_{k}\in R_{2}$.
\end{lemma}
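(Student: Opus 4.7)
The plan is to construct, for each of the two hypothesis cases, an explicit $G_\red$-torsor over the rational function field $K = F(t_1, t_2, t_3)$ whose value under the relevant invariant is the symbol $(t_1) \cup (t_2) \cup (t_3) \in H^3(K)$. This symbol is the Arason invariant of the $3$-fold Pfister form $\pi = \langle\langle t_1, t_2, t_3\rangle\rangle$, and its nontrivial residue $(t_2) \cup (t_3)$ at the $t_1$-adic valuation witnesses ramification.

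By Proposition \ref{formcor} together with Remark \ref{remarkforsimple}, normalized invariants of $G_\red$ are of the form $\boldsymbol{\mathrm{e}}_3(\phi[r])$, and a $G_\red$-torsor over $K$ is a tuple $\phi = (\phi_1, \ldots, \phi_m)$ with $\dim \phi_i = 2n_i+1$, $\disc \phi_i = 1$, and $\phi[r'] \in I^3(K)$ for every $r' \in R$. I will exploit that $-1 \in F^{\times 2}$ makes $\langle a, a\rangle$ hyperbolic over $K$, i.e., $W(K)$ is $2$-torsion, which greatly simplifies the verification of the ambient torsor relations.

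For the first hypothesis ($r = e_i$ with $e_i \in R_1$, $n_i \geq 3$), I take $\phi_i = \pi' \perp \mathbb{H}^{\perp(n_i-3)}$, where $\pi'$ denotes the pure subform of $\pi$ (of dimension $7$), and I set $\phi_l = \langle 1, \ldots, 1\rangle$ of dimension $2n_l+1$ for $l \neq i$. Then $\phi_i \perp \langle 1\rangle \sim \pi$ and each other $\phi_l \sim \langle 1\rangle$ in $W(K)$; using the $2$-torsion, $\phi[r']$ is Witt-equivalent to $0$ when $r'_i = 0$ and to $\pi$ when $r'_i = 1$, which in either case lies in $I^3(K)$. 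Thus $\phi$ is a valid $G_\red$-torsor, and the value $\boldsymbol{\mathrm{e}}_3(\phi[e_i]) = \boldsymbol{\mathrm{e}}_3(\pi) = (t_1)\cup(t_2)\cup(t_3)$ is ramified.

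For the second hypothesis ($r = e_j + e_k$ with $e_j + e_k \in R_2$, $n_j + n_k \geq 3$), I first observe that $e_j, e_k \notin R$ together with $e_j + e_k \in R$ force the image of $\gmu$ in $\gmu_2^{(j)} \times \gmu_2^{(k)}$ to be exactly the diagonal $\gmu_2^{(\Delta)}$, so the projection $\prod_i \gGamma_{2n_i+1} \to \gGamma_{2n_j+1} \times \gGamma_{2n_k+1}$ descends to a surjection $G_\red \twoheadrightarrow H := (\gGamma_{2n_j+1} \times \gGamma_{2n_k+1})/\gmu_2^{(\Delta)}$, through which the invariant $\boldsymbol{\mathrm{e}}_3(\phi[e_j + e_k])$ factors. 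It then suffices to construct an $H$-torsor $(\phi_j, \phi_k)$ with $\phi_j \perp \phi_k \sim \pi$ in $W(K)$, since the defining Clifford condition $C_0(\phi_j) + C_0(\phi_k) = 0$ translates (via Lemma \ref{honegredprop}) to $\phi_j \perp \phi_k \in I^3(K)$. Such $\phi_j, \phi_k$ of dimensions $2n_j+1, 2n_k+1$ with trivial discriminants arise by splitting $\pi \perp \mathbb{H}^{\perp(n_j+n_k-3)}$ into appropriate subforms, for instance when $n_j + n_k = 3$ via the decomposition $\pi = \langle t_1 t_2, t_1 t_3, t_2 t_3\rangle \perp \langle 1, -t_1, -t_2, -t_3, -t_1 t_2 t_3\rangle$, with hyperbolic padding to cover larger $n_j + n_k$. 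Lifting this $H$-torsor to $G_\red$ yields the ramified invariant value $\boldsymbol{\mathrm{e}}_3(\pi)$.

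The main obstacle is verifying that the torsor conditions coming from $r' \in R$ other than the distinguished $r$ are simultaneously satisfied by the chosen $\phi$. In the first hypothesis this is absorbed entirely by the $2$-torsion collapse of $W(K)$ combined with the split choices of $\phi_l$ for $l \neq i$; in the second hypothesis I sidestep these conditions by passing to the two-factor quotient $H$, which retains only the single relation $e_j + e_k$. The remaining ingredient is the explicit splitting of $\pi$ into diagonal subforms of prescribed dimension and trivial discriminant, handled by a routine case-by-case diagonal computation.
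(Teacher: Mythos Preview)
Your proposal misreads the scope of the lemma. The statement asserts that \emph{every} normalized invariant $\alpha\in\Inv^3(G_{\red})$ ramifies, not merely the particular invariants $\boldsymbol{\mathrm{e}}_3(\phi[e_i])$ or $\boldsymbol{\mathrm{e}}_3(\phi[e_j+e_k])$. The paper accordingly begins by writing an arbitrary $\alpha$ as $\boldsymbol{\mathrm{e}}_3(\phi[r_1])+\boldsymbol{\mathrm{e}}_3(\phi[r_2])+\boldsymbol{\mathrm{e}}_3(\phi[r_3])$ with $r_i\in R_i$ and then constructs a torsor at which the \emph{total} $\alpha$ has nonzero residue. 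Your Case~1 torsor happens to do this as well (since every $\phi_l$ with $l\neq i$ is Witt-trivial, one checks $\phi[r]\sim r_i\cdot\pi$ for all $r\in R$, so any $\alpha$ with $r_i=1$ ramifies), but you do not say so, and more importantly your Case~2 strategy cannot be upgraded in the same way.

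In Case~2 you pass to the two-factor quotient $H=(\gGamma_{2n_j+1}\times\gGamma_{2n_k+1})/\gmu_2^{(\Delta)}$ and argue that $\boldsymbol{\mathrm{e}}_3(\phi[e_j+e_k])$ factors through $H$. That is true, but a general $\alpha$ with nonzero $R_2$-part, say $\alpha=\boldsymbol{\mathrm{e}}_3(\phi[e_j+e_k+r_3])$ for $r_3\in R_3$, does \emph{not} factor through $H$; showing $\alpha$ ramifies genuinely requires a $G_{\red}$-torsor, and the value of $\alpha$ at any such torsor depends on the coordinates outside $\{j,k\}$. This is exactly why the paper's proof is so elaborate: it fixes carefully chosen bases $B_2$ of $R_2$ and $B_3$ of a complement, rewrites $\alpha$ as $\sum_{b\in B_2'}\boldsymbol{\mathrm{e}}_3(\phi[b])+\sum_{b\in B_3'}\boldsymbol{\mathrm{e}}_3(\phi[b])$, and then builds an explicit $\phi$ making one term contribute $\langle\langle x,y,z\rangle\rangle$ while all others contribute forms with trivial residue. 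There is a further, smaller gap in your lifting step: the trivial extension $\phi_l=\langle 1,\ldots,1\rangle$ for $l\neq j,k$ need not be a $G_{\red}$-torsor, since a relation $r'\in R$ with $r'_j\neq r'_k$ (which is not excluded by $e_j,e_k\notin R$) would force $\phi_j\perp\langle 1\rangle\in I^3(K)$, i.e.\ $C_0(\phi_j)=0$, contrary to your construction.
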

\begin{proof}
Let $R_{3}$ be a complementary subspace of $R_{1}+R_{2}$ in $R$. Then, by Proposition \ref{formcor} any normalized invariant $\alpha$ in $\Inv^{3}(G_{\red})$ can be written as 
\begin{equation*}
\alpha(\phi)=\boldsymbol{\mathrm{e}}_{3}(\phi[r_{1}])+\boldsymbol{\mathrm{e}}_{3}(\phi[r_{2}])+\boldsymbol{\mathrm{e}}_{3}(\phi[r_{3}])   
\end{equation*}
for some $r_{i}\in R_{i}$, $1\leq i\leq 3$, where $\phi=(\phi_{1},\ldots, \phi_{m})$ denotes a $G_{\red}$-torsor.

Suppose that $r_{1}\in R_{1}$ is nonzero. Then, we may assume that $r_{1}=e_{1}$ with $n_{1}\geq 3$. Choose a division quaternion algebra $(x, y)$ over a field extension $K/F$. Find $\phi[e_{1}]=\phi_{1}$ such that $\phi_{1}\perp \langle 1\rangle=\langle\langle x, y, z \rangle\rangle\perp h$ over the field of formal Laurent series $K((z))$ and set $\phi_{i}=h\perp \langle 1\rangle$ for all $2\leq i\leq m$, where $h$ denotes a hyperbolic form. Then, we have
$\partial_{z}(\alpha(\phi))=(x, y)\neq 0$, where $\partial_{z}$ denotes the residue map, thus $\alpha(\phi)$ ramifies.

Now we may assume that $\alpha(\phi)=\boldsymbol{\mathrm{e}}_{3}(\phi[r_{2}])+\boldsymbol{\mathrm{e}}_{3}(\phi[r_{3}])$ with $r_{2}\neq 0$. To show that $\alpha(\phi)$ ramifies, we shall choose bases of $R_{2}$ and a complementary subspace of $R_{2}$. For simplicity, we will write $e(i_{1},\ldots, i_{k})$ for $e_{i_{1}}+\cdots +e_{i_{k}}$. We first select $e(i_{p}, i_{p,q})\in R_{2}$, where $i_{p}$, $i_{p,q}$ ($1\leq p\leq k$, $1\leq q\leq m_{p}$) are all distinct integers for some $m_{1},\ldots, m_{k}$, so that $B_{2}:=\{e(i_{p}, i_{p,q})\}$ is a basis of $R_{2}$. In particular, if $n_{i_{p,q}}=1$ for some $p$ and $q$, say $n_{i_{1,1}}=1$, then we replace the subset $\{e(i_{1}, i_{1,q})\,|\, 1\leq q\leq m_{1}\}$ of $B_{2}$ by $\{e(i_{1,1}, i_{1}), e(i_{1,1}, i_{1q})\,|\, 2\leq q\leq m_{1} \}$ so that we may assume that $n_{i_{p}}=1$. We set
\[I_{2}=\{i_{p}\,|\, 1\leq p\leq k\}\text{ and } I_{2}'=\{i_{p}, i_{p,q}\,|\, 1\leq p\leq k, 1\leq q\leq m_{p}\}.            \]

We select a basis $B_{3}$ of a complementary subspace of $R_{2}$. First, we find any basis $D_{3}$ of $R_{3}$. Then, we modify each element $d$ of $D_{3}$ by adding $e(i_{p}, i_{p,q})$ to it whenever either $e(i_{p,q})$ or $e(i_{p}, i_{p,q})$ appears in $d$. Hence, we obtain a basis $C_{3}:=\{e(k_{1},\ldots, k_{l})\}$ of a complementary subspace of $R_2$ such that the intersection
\[\big(\bigcup\{k_{1},\ldots, k_{l}\,|\, e(k_{1},\ldots, k_{l})\in C_{3}   \}\big)\cap I_{2}',\]
where the union is over all elements of $C_{3}$, is a subset of $I_{2}$. We denote by $J_{2}$ the intersection. We can divide all elements of the basis $C_{3}$ into two types: either $e(i_{p})$ for some $i_{p}\in J_{2}$ appears in $e(k_{1},\ldots, k_{l})\in C_{3}$ (the first type) or not (the second type).

We first select basis elements from the first type elements as follows. We choose any element $b(i_{1})$ in $C_{3}$ of the first type such that $e(i_{1})$ appears in the element (if there is no element of the first type, we skip the selection of elements of the first type). We write $b(i_{1}):=e(i_{1})+b'(i_{1})$, where $e(i_{1})$ does not appear in $b'(i_{1})$. We modify every element of the first type by adding $b(i_{1})$ to the element whenever $e(i_{1})$ appears in the element. For simplicity, we shall use the same notation $C_{3}$ for the modified basis of $C_{3}$. Then, $e(i_{1})$ appears only in $b(i_{1})$ among the elements of $C_{3}$. Now we choose another element $b(i_{2})$ of the first type in which $e(i_{2})$ appears for some $i_{2}\in J_{2}$. We write $b(i_{2}):=e(i_{2})+b'(i_{2})$, where $e(i_{2})$ does not appear in $b'(i_{2})$. As $e(i_{1})$ appears only in $b(i_{1})$, both $e(i_{1})$ and $e(i_{2})$ do not appear in $b'(i_{2})$. Again, we modify every element of the first type by adding $b(i_{2})$ to the element whenever $e(i_{2})$ appears in the element. In particular, both $e(i_{1})$ and $e(i_{2})$ do not appear in the modified $b'(i_{1})$. We do the same procedure successively for all elements of the first type so that we have chosen basis elements $b(i_{p}):=e(i_{p})+b'(i_{p})$ for all $i_{p}$ in some subset $J_{2}'\subseteq J_{2}$ such that all the terms $e(i_{p})$ do not appear in $b'(i_{p})$.

Similarly, we select basis elements from the second type elements. We choose any element $b(j_{1})$ of the second type with $j_{1}\not\in J_{2}$, so that we write $b(j_{1}):=e(j_{1})+b'(j_{1})$, where $e(j_{1})$ does not appear in $b'(j_{1})$. We modify every element of $C_{3}$ (i.e., $b(i_{p})$ and elements of the second type) by adding $b(j_{1})$ to the element whenever $e(j_{1})$ appears in the element. Then, in particular, all the terms $e(i_{p})$ and $e(j_{1})$ do not appear in the modified $b'(i_{p})$. Now we choose another element $b(j_{2})$ of the second type for some $j_{2}\not\in J_{2}$, so that we have $b(j_{2}):=e(j_{2})+b'(j_{2})$, where both $e(j_{1})$ and $e(j_{2})$ do not appear in $b'(j_{2})$. Again we modify every element of $C_{3}$ by adding $b(j_{2})$ to the element whenever $e(j_{2})$ appears in the element. Then, both $e(j_{1})$ and $e(j_{2})$ do not appear in modified $b'(j_{2})$ and all the terms $e(i_{p})$, $e(j_{1})$, and $e(j_{2})$ do not appear in the modified $b'(i_{p})$. Applying the same procedure to all elements of the second type, we obtain the following basis $B_{3}$ of a complementary subspace of $R_{2}$:
\[b(i_{p}):=e(i_{p})+b'(i_{p}),\, b(j_{1}):=e(j_{1})+b'(j_{1}),\, \cdots    ,b(j_{s}):=e(j_{s})+b'(j_{s})                    \]
for all $i_{p}\in J_{2}'$ and some distinct $j_{1},\ldots, j_{s}\not\in J_{2}$ such that all the terms $e(i_{p})$ and $e(j_{r})$ do not appear in $b'(i_{p}), b'(j_{r})$ for all $1\leq r\leq s$, thus $$B_{3}=\{b(i_{p}), b(j_{r}) \,|\, i_{p}\in J_{2}', 1\leq r\leq s\}.$$

Using the basis $B_{2}\cup B_{3}$, we rewrite the invariant $\alpha(\phi)=\boldsymbol{\mathrm{e}}_{3}(\phi[r_{2}])+\boldsymbol{\mathrm{e}}_{3}(\phi[r_{3}])$ as
\begin{equation}\label{alphaphire}
\alpha(\phi)=\sum_{b\in B_{2}'}\boldsymbol{\mathrm{e}}_{3}(\phi[b])+\sum_{b\in B_{3}'}\boldsymbol{\mathrm{e}}_{3}(\phi[b])
\end{equation}
for some subsets $\emptyset\neq B_{2}'\subseteq B_{2}$ and $B_{3}'\subseteq B_{3}$. Now we show that the invariant $\alpha(\phi)$ in (\ref{alphaphire}) ramifies. It is convenient to split the proof into two cases.

\textbf{Case 1}: $\exists$ $e(i_{p}, i_{p,q})\in B_{2}'$ with $n_{i_{p}}+n_{i_{p,q}}\geq 3$ such that $i_{p}\not\in J_{2}'$. Let $e(i_{u}, i_{u,v})\in B_{2}'$ be such an element for some $1\leq u\leq k$ and $1\leq v\leq m_{u}$ and let $I=\{1,\ldots, m\}$. We take a division quaternion algebra $(x, y)$ over a field extension $K/F$.  Then, choose $\phi_{i}$ for all $i\in I$ such that 
\begin{equation}\label{eueuv}
\phi[e(i_{u})]=\phi[e(i_{u,q})]=\langle x, y, xy\rangle\perp h,\,\, \phi[e(i_{u,v})]=\langle 1, z, xz, yz, xyz\rangle\perp h  
\end{equation}
for all $1\leq q\neq v\leq m_{u}$,
\begin{equation*}
\phi[e(i_{p})]=\phi[e(i_{p,q})]=\begin{cases}\langle x, y, xy\rangle\perp h & \text{ if } e(i_{u}) \text{ appears in } b(i_{p}),\\ \langle 1\rangle\perp h & \text{ otherwise,} \end{cases} 
\end{equation*}
for all $i_{p}\in J_{2}'$ and all $q$ with $e(i_{p},i_{p,q})\in B_{2}$, and $\phi_{i}=\langle 1\rangle\perp h$ for the remaining $i\in I$ over $K((z))$, where $h$ denotes a hyperbolic form depending on the dimension of each $\phi_{i}$. Then, we have 
\begin{equation}\label{uuvuuq}
\phi[e(i_{u},i_{u,v})]=\langle\langle x, y, z\rangle\rangle,\,\, \phi[e(i_{u},i_{u,q})]=\langle\langle x, y, 1\rangle\rangle
\end{equation}
for all $1\leq q\neq v\leq m_{u}$,
\begin{equation}\label{uuvuuqb}
\phi[e(i_{p},i_{p,q})]=\phi[b(i_{p})]=\langle\langle x, y, 1\rangle\rangle
\end{equation}
for all $p\in J_{2}'$ and all $q$ with $e(i_{p},i_{p,q})\in B_{2}$ such that $e(i_{u})$ appears in $b(i_{p})$, and $\phi[b]=0$ for all remaining $b\in B_{2}\cup B_{3}$ in the Witt ring of $K((z))$. Therefore, we obtain
$\partial_{z}(\alpha(\phi))=(x, y)\neq 0$. Hence, $\alpha(\phi)$ ramifies.

\textbf{Case 2}: $\exists$ $e(i_{p}, i_{p,q})\in B_{2}'$ with $n_{i_{p}}+n_{i_{p,q}}\geq 3$ such that $i_{p}\in J_{2}'$. Let $e(i_{u}, i_{u,v})\in B_{2}'$ be such an element as in the previous case. Observe that by construction of $B_{3}$ there exists 
\begin{equation}\label{choicekone}
k_{1}\in I\backslash \{i_{p}, j_{r}\,|\, i_{p}\in J_{2}', 1\leq r\leq s\}
\end{equation}
such that $e(k_{1})$ appears in $b'(i_{u})$. We first choose $\phi[e(i_{u,v})]$ as in (\ref{eueuv}) and $\phi[e(k_{1})]=\langle x, y, xy\rangle\perp h$. Then, we choose $\phi_{i}$ for $i\in I\backslash \{i_{u,v}, k_{1} \}$ such that
\begin{equation*}
\phi[e(i)]=\begin{cases}\langle x, y, xy\rangle\perp h & \text{ if } e(k_{1}) \text{ appears in } b(i),\\ \langle 1\rangle\perp h & \text{ otherwise} \end{cases} 
\end{equation*}
for all $i\in \{i_{p}, j_{r}\,|\, i_{p}\in J_{2}', 1\leq r\leq s\}$,
\begin{equation*}
\phi[e(i_{p,q})]=\begin{cases}\langle x, y, xy\rangle\perp h & \text{ if } i_{p}=k_{1} \text{ or } e(k_{1}) \text{ appears in } b(i_{p}),\\ \langle 1\rangle\perp h & \text{ otherwise} \end{cases} 
\end{equation*}
for all $q$ such that $e(i_{p}, i_{p,q})\in B_{2}$, and $\phi_{i}=\langle 1\rangle\perp h$ for the remaining $i\in I\backslash \{i_{u,v}, k_{1} \}$ over $K((z))$. Therefore, we obtain (\ref{uuvuuq}),
\begin{equation}\label{bieipdtypeB}
\phi[b(i)]=\phi[e(i_{p},i_{p,q})]=\langle\langle x, y, 1\rangle\rangle
\end{equation}
for all $i\in \{i_{p}, j_{r}\,|\, i_{p}\in J_{2}', 1\leq r\leq s\}$ such that $e(k_{1})$ appears in $b(i)$ and for all $e(i_{p}, i_{p,q})\in B_{2}$ such that $i_{p}=k_{1}$ or $e(k_{1})$ appears in $b(i_{p})$, and $\phi[b]=0$ for all remaining $b\in B_{2}\cup B_{3}$ in the Witt ring of $K((z))$. Hence, $\partial_{z}(\alpha(\phi))=(x, y)\neq 0$, thus $\alpha(\phi)$ ramifies.\end{proof}

We present the second main result on the group of unramified degree $3$ invariants for type $B$.

\begin{theorem}\label{secthm}
Let $G=(\prod_{i=1}^{m}\gSpin_{2n_{i}+1})/\gmu$ defined over an algebraically closed field $F$, $m, n_{i}\geq 1$, where $\gmu$ is a central subgroup. Then, every unramified degree $3$ invariant of $G$ is trivial, i.e., $\Inv^{3}_{\nr}(G)=0$.
\end{theorem}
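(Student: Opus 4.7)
The plan is, for each nonzero class of $\Inv^3(G)$, to exhibit a $G$-torsor and a discrete valuation at which the invariant has nontrivial residue. Since $F$ is algebraically closed of characteristic $0$, we have $H^1(F) = H^3(F) = 0$, so $\Inv^3(G) = \Inv^3(G)_{\ind} = Q(G)/\Dec(G)$. The quotient $G_{\red}/G$ is an $F$-torus, so Hilbert $90$ gives surjectivity of $H^1(K,G) \twoheadrightarrow H^1(K, G_{\red})$ for every $K/F$; hence ramification of a reductive invariant on some $G_{\red}$-torsor transfers to a $G$-torsor with the same residue, and it suffices to work over $G_{\red}$ for the reductive subgroup $\Inv^3(G)_{\red} \hookrightarrow \Inv^3(G)_{\ind}$.

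First I would dispose of the reductive part by combining Proposition \ref{formcor} with Lemma \ref{lemmaunrami}. Any nonzero class is represented by $\boldsymbol{\mathrm{e}}_3(\phi[r])$ with $r = r_1 + r_2 + r_3 \in R_1 \oplus R_2 \oplus R_3$ lying outside the subspace $\langle e_i,\, e_j + e_k \mid n_i \leq 2,\, n_j = n_k = 1\rangle$. When either $r_1 \neq 0$ with some $n_i \geq 3$ or $r_2 \neq 0$ with some $n_j + n_k \geq 3$, Lemma \ref{lemmaunrami} directly supplies a $G_{\red}$-torsor over $K((z))$ whose residue at $z$ is a nontrivial quaternion class $(x, y)$. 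In the remaining subcase $r_1 = r_2 = 0$ and $r_3 \neq 0$, I would run a parallel construction, arranging the $G_{\red}$-torsor so that $\phi[r_3]$ becomes the $3$-Pfister form $\langle\langle x, y, z \rangle\rangle$ supported on the components active in $r_3$, with hyperbolic fillers elsewhere, giving the same nonzero residue $(x,y)$ at $z$.

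Next I would address the cokernel $\Inv^3(G)_{\ind}/\Inv^3(G)_{\red}$. By Corollary \ref{coromain}, this quotient vanishes as soon as every $n_i \geq 2$, so the only extra contributions arise from rank-one components $\gSpin_3 = \gSL_2$ with $e_i \notin R$, for which the extra invariants come from characters $q = \sum d_i q_i$ with $d_i$ odd at such an index $i$. Via the exceptional isomorphism $B_1 = A_1$ these are Rost-type invariants of the quaternion algebras parametrized by the $\gSL_2$-factors, and following the pattern of \cite[Theorem 7.1]{Mer161} and \cite[§4]{Mer17}, I would construct a $G$-torsor over the iterated Laurent field $F((x))((y))((z))$ whose restriction to the distinguished $\gSL_2$-component is the division quaternion $(x, y)$ and whose other components contribute hyperbolically, so that the invariant specializes to the nonzero $3$-symbol $(x, y, z)$ and hence ramifies at $z$. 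Combined with the reductive step, every unramified invariant must vanish.

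The main obstacle I anticipate is the last step, the non-reductive piece. The delicate point is that the constructed torsor must simultaneously (i) satisfy the $I^3$-descent conditions of Lemma \ref{honegredprop} on \emph{each} of the Pfister-type relations coming from $R$, (ii) make all reductive contributions cancel so that only the extra $\gSL_2$-class survives, and (iii) yield a genuinely nonzero $3$-symbol in $H^3$ rather than a trivial element masked by cross-term cancellations. Checking compatibility of these three requirements is where the bookkeeping from Lemma \ref{lemmaunrami}'s proof must be extended and combined with Merkurjev's type-$A$ argument.
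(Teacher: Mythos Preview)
Your main obstacle is self-inflicted: you do not need to handle the non-reductive cokernel $\Inv^{3}(G)_{\ind}/\Inv^{3}(G)_{\red}$ at all. Since $G_{\red}/G$ is a split torus, the classifying spaces $BG$ and $BG_{\red}$ are stably birational, and Rost's isomorphism (\ref{Rostisomrphismm}) then gives $\Inv^{3}_{\nr}(G)\simeq H^{3}_{\nr}(F(BG))=H^{3}_{\nr}(F(BG_{\red}))\simeq \Inv^{3}_{\nr}(G_{\red})$. So it suffices to prove $\Inv^{3}_{\nr}(G_{\red})=0$, which concerns only the reductive invariants; the extra non-reductive classes of $G$ are automatically taken care of and your entire third paragraph, with its delicate compatibility checks (i)--(iii), becomes unnecessary. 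Your Hilbert~$90$ observation that $H^{1}(K,G)\twoheadrightarrow H^{1}(K,G_{\red})$ is correct but is only half of the story: you use it to push ramification from $G_{\red}$ to $G$, but miss that stable birationality identifies the unramified groups outright.

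For the reductive part, the paper's argument is also organized differently from yours. Rather than treating the residual subcase $r_{1}=r_{2}=0$, $r_{3}\neq 0$ by an ad hoc parallel construction, the paper passes through the standard embedding $\gGamma_{3}\hookrightarrow\gGamma_{2n_{i}+1}$ to the rank-one group $G'_{\red}=(\gGamma_{3})^{m}/\gmu$, obtaining a map $\Inv^{3}(G_{\red})\to\Inv^{3}(G'_{\red})$. Proposition~\ref{formcor} applied to both groups shows that the kernel of this map is exactly the span of the classes covered by the hypothesis of Lemma~\ref{lemmaunrami}; so either that hypothesis holds (and Lemma~\ref{lemmaunrami} ramifies \emph{every} nonzero invariant, including those with $r_{3}\neq 0$), or the map is an isomorphism and one concludes by invoking $\Inv^{3}_{\nr}(G'_{\red})=0$ via the exceptional isomorphism $A_{1}=B_{1}$ and \cite[Lemma~4.3]{Mer17}. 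Your sketch of arranging $\phi[r_{3}]$ to be $\langle\!\langle x,y,z\rangle\!\rangle$ while keeping all the $I^{3}$-conditions from Lemma~\ref{honegredprop} is plausible but is precisely the bookkeeping that the rank-one reduction to Merkurjev's type-$A$ result is designed to avoid.
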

\begin{proof}
Let $G_{\red}=(\prod_{i=1}^{m}\gGamma_{2n_{i}+1})/\gmu$. Since the classifying space $BG$ is stably birational to the classifying space $BG_{\red}$, by (\ref{Rostisomrphismm}) we have $\Inv^{3}_{\nr}(G)=\Inv^{3}_{\nr}(G_{\red})$. We shall show that $\Inv^{3}_{\nr}(G_{\red})=0$. Let $G'=(\gSpin_{3})^{m}/\gmu$ and $G'_{\red}=(\gGamma_{3})^{m}/\gmu$. Then, the standard embeddings $\gSpin_{3}\to \gSpin_{2n_{i}+1}$ and $\gGamma_{3}\to \gGamma_{2n_{i}+1}$ induce morphisms $G'\to G$ and $G'_{\red}\to G_{\red}$, thus we have
\begin{equation}\label{seconddiag}
\xymatrix{
	\Inv^{3}(G) \ar@{->}[r] & \Inv^{3}(G')  \\
	\Inv^{3}(G_{\red})  \ar@{^{(}->}[u]\ar@{->}[r] & \Inv^{3}(G'_{\red}) \ar@{^{(}->}[u]\\
}
\end{equation}
By (\ref{formcoreqa}) in Proposition \ref{formcor} and Lemma \ref{lemmaunrami} we may assume that the bottom map in (\ref{seconddiag}) is an isomorphism. By \cite[Lemma 4.3]{Mer17} and the exceptional isomorphism $A_{1}=B_{1}$, we have $\Inv^{3}_{\nr}(G'_{\red})=0$, thus every invariant of $G_{\red}$ is ramified.
\end{proof}

\subsection{Type $C$}

\begin{lemma}\label{honegredpropC}
	Let $G=(\prod_{i=1}^{m}\gSp_{2n_{i}})/\gmu$, $m, n_{i}\geq 1$, where $\gmu$ is a central subgroup. Let $R$ be the subgroup of $(\gmu_{2}^{m})^{*}=(\Z/2\Z)^{m}$ whose quotient is the character group $\gmu^{*}$. Set $G_{\red}=(\prod_{i=1}^{m}\gGSp_{2n_{i}})/\gmu$, where $\gGSp_{2n_{i}}$ denotes the group of symplectic similitudes. Then, for any field extension $K/F$ the first Galois cohomology set $H^{1}(K, G_{\red})$ is bijective to the set of $m$-tuples $\big((A_{1},\sigma_{1}),\ldots, (A_{m},\sigma_{m})\big)$ of pairs of central simple $K$-algebra $A_{i}$ of degree $2n_{i}$ with symplectic involution $\sigma_{i}$ such that for all $r=(r_{i})\in R$
	\begin{equation*}
	r_{1}A_{1}+\cdots +r_{m}A_{m}=0 \text{ in } \Br(K),
	\end{equation*}
	where $\Br(K)$ denotes the Brauer group of $K$.
\end{lemma}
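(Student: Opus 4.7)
The plan is to follow the template of Lemma \ref{honegredprop}. First, I would set up the central extension
\[1\to (\gm)^{m}/\gmu \to G_{\red}\to \prod_{i=1}^{m}\gPGSp_{2n_{i}}\to 1,\]
which exists because each $\gGSp_{2n_i}$ has center $\gm$ with quotient $\gPGSp_{2n_i}$, and $\gmu\subset (\gmu_2)^m\subset (\gm)^m$. Since $(\gm)^m/\gmu$ is a split torus (its character lattice is a full-rank sublattice of $\Z^m$), Hilbert 90 gives $H^1(F,(\gm)^m/\gmu)=0$, and by \cite[Proposition 42]{Serre} the set $H^1(F,G_{\red})$ is in bijection with the kernel of the connecting map
\[\partial:H^{1}(F,\prod_{i=1}^{m}\gPGSp_{2n_{i}})\to H^{2}(F,(\gm)^{m}/\gmu),\]
which factors as
\[H^{1}(F,\prod_{i=1}^{m}\gPGSp_{2n_{i}})\to H^{2}(F,(\gm)^{m})=\Br(F)^{m}\xrightarrow{\tau} H^{2}(F,(\gm)^{m}/\gmu).\]

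Next, I would identify $H^1(F,\gPGSp_{2n_i})$ with the set of isomorphism classes of pairs $(A_i,\sigma_i)$, where $A_i$ is a central simple $F$-algebra of degree $2n_i$ and $\sigma_i$ is a symplectic involution on $A_i$ (see \cite[\S 29]{KMRT}), and note that the connecting map for $1\to\gm\to\gGSp_{2n_i}\to\gPGSp_{2n_i}\to 1$ sends $(A_i,\sigma_i)$ to the Brauer class $[A_i]$. Hence the first arrow in the factorization above is $((A_i,\sigma_i))\mapsto ([A_1],\ldots,[A_m])$.

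Finally, I would translate the condition $([A_1],\ldots,[A_m])\in\Ker(\tau)$ in terms of $R$. The characters of the torus $(\gm)^m/\gmu$ are precisely the tuples $\chi=(a_1,\ldots,a_m)\in\Z^m$ whose mod-$2$ reductions lie in $R$, so the tuple $([A_i])$ lies in $\Ker(\tau)$ if and only if $\sum a_i[A_i]=0$ in $\Br(F)$ for every such $\chi$. Since each $A_i$ admits an involution of the first kind, $A_i\otimes_F A_i$ is split, so $[A_i]$ is $2$-torsion in $\Br(F)$ and only the residues $\bar a_i\in\Z/2\Z$ contribute. The condition therefore reduces to $\sum_{i=1}^{m}r_i[A_i]=0$ in $\Br(F)$ for every $r=(r_i)\in R$, which is exactly the statement of the lemma. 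The main subtlety is this reduction from integral characters to the $2$-torsion subgroup $R$, which works precisely because symplectic involutions force each $[A_i]$ to have order dividing $2$.
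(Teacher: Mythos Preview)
Your proof is correct and follows essentially the same approach as the paper's: set up the central extension, invoke Hilbert~90 and \cite[Proposition 42]{Serre}, identify the connecting map via the Brauer class, and unwind the kernel condition using characters. The only cosmetic difference is that the paper factors the connecting map through $H^{2}(F,(\gmu_{2})^{m})=\Br_{2}(F)^{m}$ (exploiting that the image already lands in $2$-torsion), whereas you factor through $H^{2}(F,(\gm)^{m})=\Br(F)^{m}$ and then reduce to $R\subset(\Z/2\Z)^{m}$ at the end using that each $[A_{i}]$ is $2$-torsion; both routes are equivalent and your handling of that reduction is clean.
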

\begin{proof}
	Let $G_{\red}=(\prod_{i=1}^{m}\gGSp_{2n_{i}})/\gmu$, where $\gGSp_{2n_{i}}$ denotes the group of symplectic similitudes. Consider the exact sequence
	\[1\to (\gm)^{m}/\gmu \to G_{\red}\to \prod_{i=1}^{m}\gPGSP_{2n_{i}}\to 1.\]
	Then, by the same argument as in the proof of Lemma \ref{honegredprop} the set $H^{1}(F,G_{\red})$ is bijective to the kernel of following map
	\[H^{1}(F, \prod_{i=1}^{m}\gPGSP_{2n_{i}})\stackrel{}\to \Br_{2}(F)^m\stackrel{\tau}\to H^{2}(F,(\gmu_{2})^{m}/\gmu),\] 
	where the first map sends an $m$-tuple $\big((A_{1},\sigma_{1}),\ldots, (A_{m},\sigma_{m})\big)$ of simple algebra $A_{i}$ of degree $2n_{i}$ with symplectic involution $\sigma_{i}$ to the $m$-tuple $(A_{1}, \ldots, A_{m})$ and the map $\tau$ is induced by the natural surjection $(\gmu_{2})^{m}\to (\gmu_{2})^{m}/\gmu$. Since $(A_{1}, \ldots, A_{m})\in \Ker(\tau)$ if and only if it is contained in the kernel of the map in (\ref{Bdescrip}) for all $r\in R$, thus we have
	\begin{equation}\label{HoneGredC}
	H^{1}(F, G_{\red})\simeq \{\big((A_{1},\sigma_{1}),\ldots, (A_{m},\sigma_{m})\big)\,|\,\deg A_{i}=2n_{i}, \sum_{i=1}^{m}r_{i}A_{i}=0         \}   
	\end{equation}
	for all $r=(r_{i})\in R$.\end{proof}

Let $(A, \sigma)$ be a pair of central simple $F$-algebra $A$ of degree $2n$ with involution $\sigma$ of the first kind. The trace form $T_{\sigma}:A\to F$ is given by $T_{\sigma}(a)=\operatorname{Trd}(\sigma(a)a)$, where $\operatorname{Trd}$ denotes the reduced trace. We denote by $T_{\sigma}^{+}$ the restriction of $T_{\sigma}$ to $\operatorname{Sym}(A,\sigma)$. Set
\begin{equation}\label{phiR}
\phi[r]:=\perp_{i=1}^{m} r_{i}\phi_{i}, \text{ where } \phi_{i}=\begin{cases}
T_{\sigma_{i}} & \text{ if } n_{i}\equiv 1 \mod 2,\\
T_{\sigma_{i}}^{+} & \text{ if } n_{i}\equiv 2 \mod 4\\
\end{cases}
\end{equation}
for all $r=(r_{i})\in R\cap (\bigoplus_{4\nmid n_{i}}(\Z/2\Z) e_{i})$. For all $i\in I$ such that $4|n_{i}$, we simply write $\Delta$ for the Garibaldi-Parimala-Tignol invariant $\Delta(A_{i}, \sigma_{i})$ defined in \cite[Theorem A]{GPT}. Then, this degree $3$ invariant induces the following invariants of $G_{\red}$
\begin{equation}\label{GPTinvarintpro}
\Delta_{i}: H^{1}(K, G_{\red})\to H^{1}(K, \gPGSP_{2n_{i}})\stackrel{\Delta}\to H^{3}(K),
\end{equation}
where the first map in (\ref{GPTinvarintpro}) is the projection and $K/F$ is a field extension. We show that every invariant of semisimple group of type $C$ is generated by the Arason invariants associated to $\phi[r]$ and the Garibaldi-Parimala-Tignol invariants $\Delta_{i}$.

\begin{proposition}\label{formcorC}
	Let $G=(\prod_{i=1}^{m}\gSp_{2n_{i}})/\gmu$ defined over an algebraically closed field $F$, where $m, n_{i}\geq 1$, $\gmu$ is a central subgroup. Let $R$ be the subgroup of $(\gmu_{2}^{m})^{*}$ whose quotient is the character group $\gmu^{*}$. Set $G_{\red}=(\prod_{i=1}^{m}\gGSp_{2n_{i}})/\gmu$. Then, every normalized invariant in $\Inv^{3}(G_{\red})$ is of the form 
	\begin{equation}\label{Cinvariantform}
	\sum_{r\in R'}\boldsymbol{\mathrm{e}}_{3}(\phi[r])+\sum_{i\in I'}\Delta_{i}
	\end{equation}
	for some $R'\subseteq R\cap (\bigoplus_{4\nmid n_{i}}(\Z/2\Z) e_{i})$ and some subset $I'\subseteq \{i\in I\,|\, 4|n_{i}\}$, where $\phi[r]$ denotes the quadratic form defined in $($\ref{phiR}$)$ and $\boldsymbol{\mathrm{e}}_{3}: I^{3}(K)\to H^{3}(K)$ denotes the Arason invariant over a field extension $K/F$. Moreover, we have
	\begin{equation}\label{typeCprosec}
	\Inv^{3}(G_{\red})_{\norm}\simeq \frac{\bigoplus_{4\,| n_{i}}(\Z/2\Z) e_{i}\bigoplus\big(R\cap (\bigoplus_{4\nmid n_{i}}(\Z/2\Z) e_{i})\big)}{\langle e_{i},\, e_{j}+e_{k}\in R\,|\, e_{j}, e_{k}\not\in R,\, n_{j}\equiv n_{k}\equiv 1\mod 2\rangle}.
	\end{equation}
\end{proposition}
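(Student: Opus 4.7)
The plan follows the template of Proposition \ref{formcor}. Since $F$ is algebraically closed we have $\Inv^{3}(G_{\red})_{\norm}=\Inv^{3}(G_{\red})_{\ind}$, and the restriction $\Inv^{3}(G_{\red})_{\ind}\hookrightarrow \Inv^{3}(G)_{\ind}$ identifies this group with $\Inv^{3}(G)_{\red}$, which by Theorem \ref{mainthmC} has order $2^{s+l-l_{1}-l_{2}}$. Introduce the morphism
\[\psi:\bigoplus_{4\,| n_{i}}(\Z/2\Z)e_{i}\,\oplus\,\Big(R\cap \bigoplus_{4\nmid n_{i}}(\Z/2\Z)e_{i}\Big)\longrightarrow \Inv^{3}(G_{\red})_{\norm}\]
by $\psi(e_{i})=\Delta_{i}$ for $4\,| n_{i}$ and $\psi(r)=\boldsymbol{\mathrm{e}}_{3}(\phi[r])$ otherwise, and reduce the proposition to four tasks: well-definedness, surjectivity, kernel containment, and conclusion by a dimension count.

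For well-definedness, the only nontrivial point is $\phi[r]\in I^{3}(K)$ for every $G_{\red}$-torsor. By Lemma \ref{honegredpropC} the corresponding tuple $((A_{1},\sigma_{1}),\ldots,(A_{m},\sigma_{m}))$ satisfies $\sum_{i} r_{i}A_{i}=0$ in $\Br(K)$ for every $r\in R$. Combined with the classical identification of the Clifford invariant of $T_{\sigma_{i}}$ (for odd $n_{i}$) or $T_{\sigma_{i}}^{+}$ (for $n_{i}\equiv 2\mod 4$) with $[A_{i}]$, together with the (computable) dimension and discriminant of these trace forms, this constraint pushes $\phi[r]$ into $I^{3}(K)$ and makes $\boldsymbol{\mathrm{e}}_{3}(\phi[r])$ well-defined. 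The invariants $\Delta_{i}$ are normalized degree-$3$ invariants by \cite{GPT}.

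Surjectivity is verified by matching $\psi$ against the explicit generators of $\Inv^{3}(G)_{\red}$ produced in the proof of Theorem \ref{mainthmC}, where this group is realized as a quotient of $\{\sum d_{i}q_{i}\}$ by $\Dec(G)$ with generators $q_{i}$ for $4\,| n_{i}$ or $e_{i}\in R$, together with $\sum \epsilon_{i} r_{i} q_{i}$ for $r\in R\cap \bigoplus_{4\nmid n_{i}}(\Z/2\Z)e_{i}$; under the correspondence $\Inv^{3}(G)_{\ind}\cong Q(G)/\Dec(G)$ these are precisely the invariants $\Delta_{i}$ and $\boldsymbol{\mathrm{e}}_{3}(\phi[r])$ lying in the image of $\psi$. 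Next I would verify kernel containment: $\Delta_{i}$ vanishes whenever $A_{i}$ is split (equivalently, $e_{i}\in R$ with $4\,| n_{i}$); $\boldsymbol{\mathrm{e}}_{3}(\phi[e_{i}])=0$ for $e_{i}\in R$ with $4\nmid n_{i}$, since the trace form of the split symplectic algebra $(A_{i},\sigma_{i})$ is then Witt-trivial; and $\boldsymbol{\mathrm{e}}_{3}(T_{\sigma_{j}}\perp T_{\sigma_{k}})=0$ when $n_{j},n_{k}$ are odd and $A_{j}=A_{k}$ in $\Br(K)$.

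Finally, a clean dimension count concludes: the source has $\Z/2\Z$-dimension $s+l$, the target has dimension $s+l-l_{1}-l_{2}$, and the asserted kernel has dimension $l_{1}+l_{2}$; combined with surjectivity these force the kernel to coincide with the claimed subspace. The hardest step, I expect, is the vanishing check for $r=e_{j}+e_{k}\in R$ with both $n_{j},n_{k}$ odd: unlike type $B$, the dimensions of $T_{\sigma_{j}}\perp T_{\sigma_{k}}$ are too large for automatic vanishing in $I^{3}$, and one must exploit the Brauer equivalence $A_{j}=A_{k}$ and a structural comparison of the two trace forms to force the Arason invariant to cancel.
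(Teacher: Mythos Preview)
Your overall architecture matches the paper's: reduce to a dimension count against Theorem \ref{mainthmC}, check well-definedness of $\boldsymbol{\mathrm{e}}_3(\phi[r])$ via Qu\'eguiner's Hasse-invariant identification, and verify that $e_i\in R$ and $e_j+e_k\in R$ (with $n_j,n_k$ odd) lie in the kernel.  The vanishing checks are handled in the paper exactly as you sketch; for the odd--odd case the paper writes down the explicit Witt equivalence $T_{\sigma_j}\perp T_{\sigma_k}=\langle\langle a,b,1\rangle\rangle\perp h'$ where $A_j=A_k=(a,b)$ in $\Br(K)$ (note that $n_j$ odd forces $\ind A_j\le 2$, so the Brauer class is genuinely quaternion).

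The gap is your surjectivity step.  You assert that under the isomorphism $\Inv^{3}(G)_{\ind}\cong Q(G)/\Dec(G)$ the generators $q_i$ and $\sum\epsilon_i r_i q_i$ correspond to $\Delta_i$ and $\boldsymbol{\mathrm{e}}_3(\phi[r])$, but you give no argument.  For $\Delta_i$ this can be extracted by functoriality along the projection to $\gPGSp_{2n_i}$, but for $\boldsymbol{\mathrm{e}}_3(\phi[r])$ you would have to compute the Rost multiplier of the trace-form representation $G_{\red}\to\gO^+_N$, which is a real computation you have not done.  Without it the dimension count is inconclusive: $\ker\psi\supseteq K$ together with $|\text{source}/K|=|\text{target}|$ yields nothing until either injectivity or surjectivity is established independently.

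The paper closes the argument from the other side: instead of surjectivity it proves $\ker\psi=K$ exactly, by showing \emph{nontriviality} of $\Delta_i$ (for $e_i\notin R$, via an explicit torsor over a Laurent-series field with $\Delta_i(\eta)=(Q)\cup(z)$) and of $\boldsymbol{\mathrm{e}}_3(\phi[r])$ for every $r\notin R_1''+R_2''$.  The latter is obtained by restricting along the standard embeddings $\gGSp_2\hookrightarrow\gGSp_{2n_i}$ to reduce to $(\gGSp_2)^m/\gmu$, then invoking the exceptional isomorphism $A_1=C_1$, \cite[Lemma 4.3]{Mer17}, and the explicit torsor constructions of Lemma \ref{lemmaunramiC}.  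Once the kernel is pinned down exactly, the dimension match with Theorem \ref{mainthmC} gives surjectivity for free.
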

\begin{proof}
Since $F$ is algebraically closed, we get $\Inv^{3}(G_{\red})_{\norm}=\Inv^{3}(G_{\red})_{\ind}$. Let $i$ be an integer such that $n_{i}\equiv 0 \mod 4$. If $e_{i}\in R$, then, as every symplectic
involution on a split algebra is hyperbolic, by Lemma \ref{honegredpropC} and \cite[Theorem A]{GPT} the invariant $\Delta_{i}$ defined in (\ref{GPTinvarintpro}) vanishes. Now assume that $e_{i}\not\in R$.  Let $Q=(x, y)$ be a division quaternion algebra over a field extension $K/F$ and let $b=\langle 1, z\rangle\perp h$ be a symmetric bilinear form on $E^{n_{i}}$, where $h$ denotes a hyperbolic form and $E=K((z))$. Consider the linear system as in (\ref{HoneGredC}) with the coefficients given by a basis of $R$. As $e_{i}\not\in R$, it follows by the rank theorem (or Rouch\'e-Capelli theorem) that there exists a $G_{\red}$-torsor $\eta=\big((A_{1},\sigma_{1}),\ldots, (A_{m},\sigma_{m})\big)$ over $E$ such that
\begin{equation}\label{etatypeC}
(A_{i},\sigma_{i})=(M_{n_{i}}(Q), \sigma_{b}\tens \gamma) \text{ and }  (A_{j},\sigma_{j})=(M_{2n_{j}}(E), \sigma_{\omega}) \text{ or } (M_{n_{j}}(Q), t\tens \gamma) 
\end{equation}
for all $1\leq j\neq i\leq m$, where $\gamma$ denotes the canonical involution on $Q$, $\sigma_{b}$ denotes the adjoint involution on $\End(E^{n_{i}})=M_{n_{i}}(E)$ with respect to $b$, $\sigma_{\omega}$ denotes the adjoint involution with respect to the standard symplectic bilinear form $\omega$, and $t$ denotes the transpose involution on $M_{n_{j}}(E)$. Then, by \cite[Example 3.1]{GPT} we have 
\begin{equation}\label{etadeltaai}
\Delta_{i}(\eta)=(Q)\cup (z),
\end{equation}
thus, $\partial_{z}(\alpha(\eta))=(x, y)\neq 0$. Therefore, we have a nontrivial invariant $\Delta_{i}$ of order $2$
for any $i$ such that $n_{i}\equiv 0 \mod 4$ and  $e_{i}\not\in R$.

Let $r\in R\cap (\bigoplus_{4\nmid n_{i}}(\Z/2\Z) e_{i})$. Since each quadratic form $\phi_{i}$ in (\ref{phiR}) has even dimension and trivial discriminant, we obtain $\phi[r]\in I^{2}(K)$ for each $r$. By \cite[Theorem 1]{Que} the Hasse invariant of $\phi_{i}$ in (\ref{phiR}) coincides with the class of $A_{i}$ in $\Br(K)$, thus by the relation in (\ref{HoneGredC}), we have $\phi[r]\in I^{3}(K)$ for each $r\in R\cap (\bigoplus_{4\nmid n_{i}}(\Z/2\Z) e_{i})$. Therefore, the Arason invariant induces a normalized invariant $\boldsymbol{\mathrm{e}}_{3}(\phi[r])$ of order dividing $2$ that sends an $m$-tuple in (\ref{HoneGredC}) to $\boldsymbol{\mathrm{e}}_{3}(\phi[r])\in H^{3}(K)$.

Let $r\in R_{1}''+R_{2}''$, where $R_{1}''=\langle e_{i}\in R\,|\, n_{i}\not\equiv 0 \mod 4\rangle$ and $R_{2}''$ denotes the subgroup of $R$ defined in Proposition \ref{decGC}. For any $e_{i}\in R_{1}''$ and any $e_{j}+e_{k}\in R_{2}''$, we have 
\begin{equation}\label{tijvanish}
\phi_{i}=T_{\sigma_{i}}=h\, \text{ and }\, \phi_{j}\perp \phi_{k}=T_{\sigma_{j}}\perp T_{\sigma_{k}}=\langle\langle a, b, 1\rangle\rangle\perp h',
\end{equation}  
where $A_{j}=A_{k}=(a, b)$ in $\Br(K)$, $h$ and $h'$ denote hyperbolic forms, thus both invariants $\boldsymbol{\mathrm{e}}_{3}(\phi[e_{i}])$ and $\boldsymbol{\mathrm{e}}_{3}(\phi[e_{j}+e_{k}])$ vanish. Therefore, the invariant $\boldsymbol{\mathrm{e}}_{3}(\phi[r])$ vanishes.

To complete the proof, by Theorem \ref{mainthmC} it suffices to show that the invariant $\boldsymbol{\mathrm{e}}_{3}(\phi[r])$ is nontrivial for any $r\in R\cap (\bigoplus_{4\nmid n_{i}}(\Z/2\Z) e_{i})\backslash (R_{1}''+R_{2}'')$. Let $G'_{\red}=(\gGSp_{2})^{m}/\gmu$. Then, the rest of the proof of Proposition \ref{formcor} still works if we replace the exceptional isomorphism $A_{1}=B_{1}$, the standard embedding $\gGamma_{3}\to \gGamma_{2n_{i}+1}$, and Lemma \ref{lemmaunrami} in the proof of Proposition \ref{formcor} by the exceptional isomorphism $A_{1}=C_{1}$, the standard embedding $\gGSp_{2}\to \gGSp_{2n_{i}}$, and Lemma \ref{lemmaunramiC}, respectively.\end{proof}

\begin{remark}
If $m=2$, $n_{1}\equiv n_{2}\equiv 0 \mod 2$, and $\gmu\subseteq \gmu_{2}^{2}$ is the diagonal subgroup, then the invariant in Proposition \ref{formcorC} coincides with the invariant defined in \cite{BMT}.
\end{remark}

We present the following analogue of Lemma \ref{lemmaunrami}, which plays the same role for the triviality of unramified invariants as Lemma \ref{lemmaunrami} plays for the groups of type $B$.
\begin{lemma}\label{lemmaunramiC}
	Let $G=(\prod_{i=1}^{m}\gSp_{2n_{i}})/\gmu$ defined over an algebraically closed field $F$, where $m, n_{i}\geq 1$, $\gmu$ is a central subgroup. Set $G_{\red}=(\prod_{i=1}^{m}\gGSp_{2n_{i}})/\gmu$. Then, every normalized invariant in $\Inv^{3}(G_{\red})$ is ramified if either $n_{i}$ is divisible by $4$ for some $i$ with $e_{i}\not\in R_{1}$ or $n_{j}n_{k}\not\equiv 1 \mod 2$ for some $j$ and $k$ such that $e_{j}+e_{k}\in R\cap (\bigoplus_{4\nmid n_{i}}(\Z/2\Z) e_{i})$.
\end{lemma}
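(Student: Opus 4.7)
The plan is to adapt the proof of Lemma \ref{lemmaunrami} to type $C$, accounting for both families of invariants from Proposition \ref{formcorC}: the Arason invariants $\boldsymbol{\mathrm{e}}_{3}(\phi[r])$ attached to trace forms of algebras with symplectic involution, and the Garibaldi--Parimala--Tignol invariants $\Delta_{i}$. By Proposition \ref{formcorC}, any normalized invariant has the form
\[\alpha(\eta)=\sum_{r\in S_{1}}\boldsymbol{\mathrm{e}}_{3}(\phi[r])+\sum_{i\in S_{2}}\Delta_{i}(\eta),\]
for some $S_{1}\subseteq R\cap(\bigoplus_{4\nmid n_{i}}(\Z/2\Z)e_{i})$ and $S_{2}\subseteq \{i:4\mid n_{i},\, e_{i}\notin R_{1}\}$. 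The goal is to construct, under each hypothesis on $R$, a $G_{\red}$-torsor $\eta$ over $K((z))$ for a suitable extension $K/F$ whose $z$-residue $\partial_{z}(\alpha(\eta))$ is nonzero.

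For the first hypothesis ($4\mid n_{i_{0}}$ and $e_{i_{0}}\notin R_{1}$ for some $i_{0}$), I will use the torsor constructed in the proof of Proposition \ref{formcorC}, namely (\ref{etatypeC}): fix a division quaternion $Q=(x,y)$ over $K$, set $(A_{i_{0}},\sigma_{i_{0}})=(M_{n_{i_{0}}}(Q),\sigma_{b}\otimes\gamma)$ with $b=\langle 1,z\rangle\perp h$ over $K((z))$, and trivialize the remaining components so that the Brauer relations in (\ref{HoneGredC}) are satisfied; the hypothesis $e_{i_{0}}\notin R_{1}$ is precisely what permits this by the rank (Rouch\'e--Capelli) argument already appearing in the proof of Proposition \ref{formcorC}. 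By (\ref{etadeltaai}) we have $\Delta_{i_{0}}(\eta)=(Q)\cup(z)$, while every other $\Delta_{j}(\eta)$ vanishes and every $\phi[r]$ for $r\in S_{1}$ becomes Witt-equivalent to a form already defined over $K$. Therefore $\partial_{z}\alpha(\eta)=(x,y)\neq 0$, so $\alpha$ ramifies.

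For the second hypothesis, assume $S_{2}=\emptyset$ and some $e_{j_{0}}+e_{k_{0}}\in R\cap(\bigoplus_{4\nmid n_{i}}(\Z/2\Z)e_{i})$ has $n_{k_{0}}$ even. I will mimic the intricate basis-selection procedure of Lemma \ref{lemmaunrami}: choose a basis $B_{2}$ spanning the subspace generated by such pairs, extend to a basis of $R\cap(\bigoplus_{4\nmid n_{i}}(\Z/2\Z)e_{i})$ by a complement $B_{3}$, with the same element-by-element modifications performed there so that distinguished indices appear cleanly. Reduce to the situation where $e_{j_{0}}+e_{k_{0}}$ appears nontrivially in the expansion of $\alpha$ in this basis, then set $(A_{k_{0}},\sigma_{k_{0}})=(M_{n_{k_{0}}/2}(Q),\sigma_{b'}\otimes\gamma)$ with $b'$ chosen so that $T^{+}_{\sigma_{k_{0}}}$ is Witt-equivalent to the $3$-fold Pfister form $\langle\langle x,y,z\rangle\rangle$ via a standard computation of symplectic trace forms (see \cite[\S 11.B]{KMRT}), trivialize the other components, and confirm by the same two-subcase split as in Lemma \ref{lemmaunrami} that only the term $\boldsymbol{\mathrm{e}}_{3}(\phi[e_{j_{0}}+e_{k_{0}}])$ contributes to the $z$-residue. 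The main obstacle I anticipate is that the trace forms $T_{\sigma}$ and $T^{+}_{\sigma}$ do not decompose as transparently as the orthogonal sums in type $B$: prescribing $\phi[r]$ to equal a given Pfister form modulo hyperbolics requires careful choice of $(A_{i},\sigma_{i})$-data, especially the bilinear form $b'$ and the interaction with the canonical involution $\gamma$ on $Q$. Once this is handled, the combinatorial manipulations in the proof of Lemma \ref{lemmaunrami} transfer essentially verbatim.
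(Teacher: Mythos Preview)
Your treatment of the first hypothesis is essentially what the paper does: invoke the torsor from (\ref{etatypeC}) and (\ref{etadeltaai}), check that the remaining $\Delta_{j}$ vanish by \cite[Example~3.1 and Theorem~A]{GPT}, and that each $\phi[r]$ is Witt-equivalent to a form over $K$, so only $\Delta_{i_{0}}$ contributes to the $z$-residue.

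For the second hypothesis your outline has a genuine gap. First, ``trivialize the other components'' is incompatible with the Brauer relations of (\ref{HoneGredC}): if $e_{j_{0}}+e_{k_{0}}\in R$ and $[A_{k_{0}}]=[Q]\neq 0$, you are forced to take $[A_{j_{0}}]=[Q]$ as well, and then further relations propagate this to other indices exactly as in the combinatorics of Lemma~\ref{lemmaunrami}; you cannot make all other components split. Second, and more substantively, the datum $(M_{n_{k_{0}}/2}(Q),\sigma_{b'}\otimes\gamma)$ has degree $n_{k_{0}}$, not $2n_{k_{0}}$, and even after correcting this to $M_{n_{k_{0}}}(Q)$ you still need a concrete $b'$ together with a computation of $T^{+}_{\sigma_{b'}\otimes\gamma}$; the reference to \cite[\S 11.B]{KMRT} does not by itself produce a form Witt-equivalent to $\langle z,xz,yz,xyz\rangle$ plus hyperbolics.

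The paper resolves this by a different explicit construction: at the distinguished index $i_{u,v}$ (with $n_{i_{u,v}}$ even) it takes the biquaternion algebra $Q_{1}\otimes Q_{2}$ with $Q_{1}=(x,z)$, $Q_{2}=(x,yz)$, so that $[Q_{1}\otimes Q_{2}]=[Q]$ in $\Br(E)$, equipped with the involution $\gamma_{1}'\otimes\gamma_{2}$ where $\gamma_{1}'$ is an orthogonal involution on $Q_{1}$ and $\gamma_{2}$ the canonical symplectic involution on $Q_{2}$; the resulting $(M_{d}(Q_{1}\otimes Q_{2}),\,t\otimes\gamma_{1}'\otimes\gamma_{2})$ has symplectic trace form $\langle z,xz,yz,xyz\rangle\perp h$. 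The companion components are taken to be $(M_{d}(Q),t\otimes\gamma)$ or $(M_{d}(E),\sigma_{\omega})$ according to the same case analysis as (\ref{chooseCtwo})--(\ref{typeCcasetwoano}), which both satisfies the Brauer relations and reproduces the Pfister identities (\ref{uuvuuq})--(\ref{bieipdtypeB}). Once you supply this construction (or verify an explicit $b'$ that does the same job), the rest of your argument goes through.
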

\begin{proof}
Let $\alpha$ be a normalized invariant in $\Inv^{3}(G_{\red})$ be written as in (\ref{Cinvariantform}) for some subspace $R'\subseteq R\cap (\bigoplus_{4\nmid n_{i}}(\Z/2\Z) e_{i})$ and subset $I'\subseteq \{i\in I\,|\, n_{i}\equiv 0 \mod 4,  e_{i}\not\in R\}$.

Assume that there exist $i\in I'$. Let $\eta=\big((A_{1},\sigma_{1}),\ldots, (A_{m},\sigma_{m})\big)$ be a $G_{\red}$-torsor as in the proof of Proposition \ref{formcorC}. Then, by (\ref{etatypeC}), \cite[Example 3.1]{GPT}, and \cite[Theorem A]{GPT} we have 
\[\Delta_{j}(\eta)=0\]
for all $j\neq i$ such that $n_{j}\equiv 0 \mod 4$. Since
\[\phi_{j}=\begin{cases} h & \text{ if } (A_{j},\sigma_{j})=(M_{2n_{j}}(E), \sigma_{\omega}),\\ \langle\langle x, y\rangle\rangle\perp h & \text{ if } (A_{j},\sigma_{j})=(M_{n_{j}}(Q), t\tens \gamma), \end{cases}\]
where $h$ denotes a hyperbolic form and the pairs of the form $(M_{n_{j}}(Q), t\tens \gamma)$ appear an even number of times in the relation of (\ref{HoneGredC}) for any $r\in R'$, we have $\boldsymbol{\mathrm{e}}_{3}(\phi[r])=0$ for any $r\in R'$. Therefore, by (\ref{etadeltaai}) we have $\partial_{z}(\alpha(\eta))=(x, y)\neq 0$, thus the invariant $\alpha$ ramifies.

We may assume that $n_{i}\not\equiv 0 \mod 4$ for all $1\leq i\leq m$, thus
\[\alpha(\eta)=\boldsymbol{\mathrm{e}}_{3}(\phi[r_{2}])+\boldsymbol{\mathrm{e}}_{3}(\phi[r_{3}])  \]
for some nonzero $r_{2}\in R_{2}$ and some $r_{3}\in R_{3}$, where $R_{1}$ and $R_{2}$ denote the subspaces of $R$ in (\ref{RoneRtwo}), $R_{3}$ is a complementary subspace of $R_{1}+R_{2}$ in $R$, and $\eta$ is a $G_{\red}$-torsor. Then, we choose bases $B_{2}=\{e(i_{p}, i_{p,q})\}$ of $R_{2}$ such that either $n_{i_{p,q}}$ is even or $n_{i_{p}}+n_{i_{p,q}}$ is even, and $B_{3}$ of a complementary subspace of $R_{2}$ as in Lemma \ref{lemmaunrami} so that the invariant $\alpha$ is written as in (\ref{alphaphire}). We show that the invariant $\alpha(\eta)$ ramifies following the proof of Lemma \ref{lemmaunrami}.

\textbf{Case 1}: $\exists$ $e(i_{p}, i_{p,q})\in B_{2}'$ with $n_{i_{p}}n_{i_{p,q}}\not\equiv 1 \mod 2$ such that $i_{p}\not\in J_{2}'$. Let $e(i_{u}, i_{u,v})\in B_{2}'$ be such an element for some $1\leq u\leq k$ and $1\leq v\leq m_{u}$. Let $Q=(x, y)$ be a division quaternion algebra over $K/F$ and let $Q_{1}=(x, z)$ and $Q_{2}=(x, yz)$ be quaternions over $E$. We denote by $\gamma$, $\gamma_{1}$, $\gamma_{2}$ the canonical involutions on $Q$, $Q_{1}$, $Q_{2}$, respectively. For the sake of simplicity, we shall write the symbol $d$ for the corresponding degree of the matrix algebras in the rest of the proof. Now we choose $\eta=\big((A_{i},\sigma_{i})\big)$ for $i\in I$ such that
\begin{equation}\label{chocaseoneC}
(A_{i}, \sigma_{i})=(M_{d}(Q), t\tens \gamma),\,\, (A_{i_{u,v}}, \sigma_{i_{u,v}})=(M_{d}(Q_{1}\tens Q_{2}), t\tens \gamma_{1}'\tens \gamma_{2})
\end{equation}
for $i=i_{u}, i_{u,q}$ and all $1\leq q\neq v\leq m_{u}$, where $t$ denotes the transpose involution on a matrix algebra and $\gamma_{1}'$ denotes an orthogonal involution on $Q_{1}$ given by the composition of $\gamma_{1}$ and the inner automorphism induced by one of the generators of pure quaternions in $Q_{1}$,
\begin{equation}\label{chooseCtwo}
(A_{i_{p}}, \sigma_{i_{p}}), (A_{i_{p,q}}, \sigma_{i_{p,q}})=\begin{cases}(M_{d}(Q), t\tens \gamma) & \text{ if } e(i_{u}) \text{ appears in } b(i_{p}),\\ (M_{d}(E), \sigma_{\omega}) & \text{ otherwise,} \end{cases} 
\end{equation}
for all $i_{p}\in J_{2}'$ and all $q$ with $e(i_{p},i_{p,q})\in B_{2}$, and 
\begin{equation}\label{chooseCthree}
(A_{i}, \sigma_{i})=(M_{d}(E), \sigma_{\omega})
\end{equation}
for the remaining $i\in I$. Then, we have
\begin{equation*}
\phi[e(i_{u})]=\phi[e(i_{u,q})]=\langle\langle x, y\rangle\rangle\perp h,\,\, \phi[e(i_{u,v})]=\langle z, xz, yz, xyz\rangle\perp h
\end{equation*}
for all $1\leq q\neq v\leq m_{u}$, thus we obtain (\ref{uuvuuq}), (\ref{uuvuuqb}), and $\phi[b]=0$ for all remaining $b\in B_{2}\cup B_{3}$ in the Witt ring of $E$. Hence, $\partial_{z}(\alpha(\eta))=(x, y)\neq 0$, i.e., $\alpha$ ramifies.

\textbf{Case 2}:  $\exists$ $e(i_{p}, i_{p,q})\in B_{2}'$ with $n_{i_{p}}n_{i_{p,q}}\not\equiv 1 \mod 2$ such that $i_{p}\in J_{2}'$. Let $e(i_{u}, i_{u,v})\in B_{2}'$ be such an element. We choose $k_{1}$ as in (\ref{choicekone}) and then choose $(A_{k_{1}}, \sigma_{k_{1}})$ and $(A_{i_{u,v}}, \sigma_{i_{u,v}})$ as in (\ref{chocaseoneC}). Then, we choose $(A_{i}, \sigma_{i})$ for $i\in I\backslash \{i_{u,v}, k_{1} \}$ such that
\begin{equation}\label{typeCcasetwoone}
(A_{i}, \sigma_{i})=\begin{cases}(M_{d}(Q), t\tens \gamma) & \text{ if } e(k_{1}) \text{ appears in } b(i),\\ (M_{d}(E), \sigma_{\omega}) & \text{ otherwise} \end{cases} 
\end{equation}
for all $i\in \{i_{p}, j_{r}\,|\, i_{p}\in J_{2}', 1\leq r\leq s\}$,
\begin{equation}\label{typeCcasetwooneprime}
(A_{i_{pq}}, \sigma_{i_{p,q}})=\begin{cases}(M_{d}(Q), t\tens \gamma) & \text{ if } i_{p}=k_{1} \text{ or } e(k_{1}) \text{ appears in } b(i_{p}),\\ (M_{d}(E), \sigma_{\omega}) & \text{ otherwise} \end{cases} 
\end{equation}
for all $q$ such that $e(i_{p}, i_{p,q})\in B_{2}$, and
\begin{equation}\label{typeCcasetwoano}
(A_{i}, \sigma_{i})=(M_{d}(E), \sigma_{\omega})
\end{equation}
for the remaining $i\in I\backslash \{i_{u,v}, k_{1} \}$. Therefore, we obtain (\ref{uuvuuq}), (\ref{bieipdtypeB}), and $\phi[b]=0$ for all remaining $b\in B_{2}\cup B_{3}$ in the Witt ring of $E$. Therefore, $\partial_{z}(\alpha(\eta))=(x, y)\neq 0$, thus $\alpha$ ramifies.\end{proof}

We show that the same result in Theorem \ref{secthm} holds for the groups of type $C$.

\begin{theorem}\label{secthmC}
	Let $G=(\prod_{i=1}^{m}\gSp_{2n_{i}})/\gmu$ defined over an algebraically closed field $F$, $m, n_{i}\geq 1$, where $\gmu$ is a central subgroup. Then, every unramified degree $3$ invariant of $G$ is trivial, i.e., $\Inv^{3}_{\nr}(G)=0$.
\end{theorem}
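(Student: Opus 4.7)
The plan is to follow exactly the pattern established in the proof of Theorem \ref{secthm}, using Proposition \ref{formcorC} and Lemma \ref{lemmaunramiC} in place of Proposition \ref{formcor} and Lemma \ref{lemmaunrami}, and invoking the exceptional isomorphism $A_{1}=C_{1}$ in place of $A_{1}=B_{1}$. First, I would replace $G$ by its reductive envelope $G_{\red}=(\prod_{i=1}^{m}\gGSp_{2n_{i}})/\gmu$: the quotient $G_{\red}/G$ is the torus $(\gm)^{m}/\gmu$, so $BG$ is stably birational to $BG_{\red}$, and by Rost's isomorphism (\ref{Rostisomrphismm}) the problem reduces to proving $\Inv^{3}_{\nr}(G_{\red})=0$.

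Next, I would introduce the auxiliary groups $G'=(\gSp_{2})^{m}/\gmu$ and $G'_{\red}=(\gGSp_{2})^{m}/\gmu$. The standard embeddings $\gSp_{2}\hookrightarrow \gSp_{2n_{i}}$ and $\gGSp_{2}\hookrightarrow \gGSp_{2n_{i}}$ induce morphisms $G'\to G$ and $G'_{\red}\to G_{\red}$, producing the commutative diagram
\[
\xymatrix{
\Inv^{3}(G) \ar@{->}[r] & \Inv^{3}(G')  \\
\Inv^{3}(G_{\red})  \ar@{^{(}->}[u]\ar@{->}[r] & \Inv^{3}(G'_{\red}) \ar@{^{(}->}[u]\\
}
\]
in which the vertical arrows are the inclusions of reductive indecomposable invariants. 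The crucial step is to argue that, thanks to the description (\ref{typeCprosec}) in Proposition \ref{formcorC} together with Lemma \ref{lemmaunramiC}, we may assume the bottom horizontal map is an isomorphism. Indeed, if either of the hypotheses of Lemma \ref{lemmaunramiC} holds, every normalized invariant of $G_{\red}$ already ramifies and we are done; otherwise every index $i$ with $4\mid n_{i}$ satisfies $e_{i}\in R$, which forces $\Delta_{i}$ to vanish on every $G_{\red}$-torsor, and every $e_{j}+e_{k}\in R\cap\big(\bigoplus_{4\nmid n_{i}}(\Z/2\Z)e_{i}\big)$ has both $n_{j},n_{k}$ odd. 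In this complementary case the description (\ref{typeCprosec}) identifies $\Inv^{3}(G_{\red})_{\norm}$ with $\Inv^{3}(G'_{\red})_{\norm}$ through the standard embeddings.

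To conclude, by the exceptional isomorphism $A_{1}=C_{1}$ the group $G'_{\red}$ is a semisimple group of type $A$, so \cite[Lemma 4.3]{Mer17} yields $\Inv^{3}_{\nr}(G'_{\red})=0$. Combined with the injectivity of the induced map on unramified subgroups coming from the diagram, this forces $\Inv^{3}_{\nr}(G_{\red})=0$ and hence $\Inv^{3}_{\nr}(G)=0$. The main point requiring care is verifying the isomorphism assertion for the bottom map in the complementary case of Lemma \ref{lemmaunramiC}: concretely, one must check that the $\Delta_{i}$ summands are killed on $G_{\red}$-torsors precisely when $e_{i}\in R$, and that the Arason summands $\boldsymbol{\mathrm{e}}_{3}(\phi[r])$ on both sides are parameterized by the same quotient of $R$. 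Both verifications are essentially formal once the hypotheses of the lemma are unwound.
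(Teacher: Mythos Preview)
Your proposal is correct and follows essentially the same approach as the paper's proof, which literally consists of the single sentence ``the proof of Theorem \ref{secthm} still works if we replace Proposition \ref{formcor}, Lemma \ref{lemmaunrami}, and the exceptional isomorphism $A_{1}=B_{1}$ by Proposition \ref{formcorC}, Lemma \ref{lemmaunramiC}, and $A_{1}=C_{1}$.'' Your additional unpacking of the complementary case of Lemma \ref{lemmaunramiC} (checking that the $\Delta_{i}$ summands die when $e_{i}\in R$ and that the Arason summands match those for $G'_{\red}$) is exactly the verification the paper leaves implicit.
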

\begin{proof}
Let $G_{\red}=(\prod_{i=1}^{m}\gGSp_{2n_{i}})/\gmu$, $G'_{\red}=(\gGSp_{2})^{m}/\gmu$, and $G'=(\gSp_{2})^{m}/\gmu$. Then, the proof of Theorem \ref{secthm} still works if we replace Proposition \ref{formcor}, Lemma \ref{lemmaunrami}, and the exceptional isomorphism $A_{1}=B_{1}$ in the proof by Proposition \ref{formcorC}, Lemma \ref{lemmaunramiC}, and the exceptional isomorphism $A_{1}=C_{1}$, respectively.
\end{proof}

\subsection{Type $D$}

\begin{lemma}\label{honegredpropD}
	Let $G=(\prod_{i=1}^{m}\gSpin_{2n_{i}})/\gmu$, $m, \geq 1$, $n_{i}\geq 3$, where $\gmu$ is a central subgroup.Let $R$ be the subgroup of the character group $Z$ defined in $($\ref{centerD}$)$ such that $\gmu^{*}=Z/R$. Set $G_{\red}=(\prod_{i=1}^{m}\operatorname{\mathbf{\Omega}}_{2n_{i}})/\gmu$, where $\operatorname{\mathbf{\Omega}}_{2n_{i}}$ denotes the extended Clifford group. Then, for any field extension $K/F$ the first Galois cohomology set $H^{1}(K, G_{\red})$ is bijective to the set of $m$-tuples $\big((A_{1},\sigma_{1}, f_{1}),\ldots, (A_{m},\sigma_{m}, f_{m})\big)$ of triples consisting of a central simple $K$-algebra $A_{i}$ of degree $2n_{i}$ with orthogonal involution $\sigma_{i}$ of trivial discriminant and a $K$-algebra isomorphism $f_{i}: Z(C(A_{i}, \sigma_{i})\big)\simeq K\times K$, where $Z\big(C(A_{i}, \sigma_{i})\big)$ denotes the center of the Clifford algebra $C(A_{i},\sigma_{i})$, satisfying 
	\begin{equation*}\label{conditionD}
		B_{1}+\cdots +B_{m}=0 \text{ in } \Br(K) 
	\end{equation*}
	for all $\sum_{i=1}^{m}r'_{i}\in R$  with \[r'_{i}=\begin{cases} r_{i}e_{i} & \text{ if } n_{i} \text{ odd,}\\ r_{i,1}e_{i,1}+r_{i,2}e_{i,2} & \text{ if } n_{i} \text{ even},\end{cases}\]
	where \[B_{i}:=\begin{cases} r_{i}C_{i,1}\, \text{ or }\, r_{i}C_{i,2} & \text{ if } n_{i} \text{ odd,}\\ r_{i,1}C_{i,1}+r_{i,2}C_{i,2} \,\text{ or }\, r_{i,1}C_{i,2}+r_{i,2}C_{i,1} & \text{ if } n_{i} \text{ even}\end{cases}\]
	depending on the choice of two isomorphisms $f_{i}$ for each triple $(A_{i},\sigma_{i}, f_{i})$, $C_{i,1}$ and $C_{i,2}$ denote simple $K$-algebras such  that $C(A_{i}, \sigma_{i})=C_{i,1}\times C_{i,2}$, and $\Br(K)$ denotes the Brauer group of $K$.
\end{lemma}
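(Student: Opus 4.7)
The plan is to adapt the arguments of Lemmas \ref{honegredprop} and \ref{honegredpropC} to type $D$, where the main new ingredient is the parametrization of $H^1(K, \gPGO^{+}_{2n_{i}})$ in terms of triples $(A_i, \sigma_i, f_i)$ rather than just quadratic forms or simple algebras with involution.

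First, I would start from the natural exact sequence
\[1 \to S/\gmu \to G_{\red} \to \prod_{i=1}^{m} \gPGO^{+}_{2n_{i}} \to 1,\]
where $S := \prod_i Z(\operatorname{\mathbf{\Omega}}_{2n_i})$ is a split torus containing $\gmu$. Hilbert 90 applied to the split torus $S/\gmu$ gives $H^1(K, S/\gmu) = 0$, so the long exact sequence in Galois cohomology identifies $H^1(K, G_{\red})$ with the kernel of the connecting homomorphism
\[\delta \colon H^1\bigl(K, \textstyle\prod_i \gPGO^{+}_{2n_{i}}\bigr) \longrightarrow H^2(K, S/\gmu).\]
By the standard parametrization (cf.\ \cite[\S 29]{KMRT}), the source of $\delta$ is the set of $m$-tuples of triples $(A_i, \sigma_i, f_i)$ as in the statement.

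Next, I would factor $\delta$ through the Tits-algebra maps $H^1(K, \gPGO^{+}_{2n_{i}}) \to H^2(K, Z(\operatorname{\mathbf{\Omega}}_{2n_i}))$, which under the chosen $f_i$ record the Brauer classes of the two simple components $C_{i,1}, C_{i,2}$ of $C(A_i, \sigma_i)$. Since $S/\gmu$ is a split torus, an element of $H^2(K, S/\gmu)$ vanishes if and only if its pairing with every character of $S/\gmu$ vanishes in $\Br(K)$; these characters are precisely the elements of the character group of $S$ annihilating $\gmu$, and they surject onto $R \subseteq Z$. Pairing the image of $\eta$ against the character associated to $r = \sum_i r_i' \in R$ expressed in the bases $e_i$ (odd $n_i$) or $e_{i,1}, e_{i,2}$ (even $n_i$) produces exactly the Brauer summand $B_i$ defined in the lemma, so $\delta(\eta) = 0$ is equivalent to the relation $B_1 + \cdots + B_m = 0$ in $\Br(K)$ for every $r \in R$.

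The principal subtlety lies in the labeling ambiguity of the two simple components $C_{i,1}, C_{i,2}$ of $C(A_i, \sigma_i)$. The two possible isomorphisms $f_i \colon Z\bigl(C(A_i, \sigma_i)\bigr) \simeq K \times K$ differ by the swap of the two factors, which interchanges $C_{i,1}$ with $C_{i,2}$; on $Z_i$ this swap acts by $e_i \mapsto -e_i$ for odd $n_i$ and by $e_{i,1} \leftrightarrow e_{i,2}$ for even $n_i$. The disjunctive definition of $B_i$ in the lemma records exactly the two possibilities produced by the two choices of $f_i$, and the main technical task is to verify carefully that these alternatives exhaust the image of the Tits-algebra map for each parity of $n_i$. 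Once this bookkeeping is set up, the conclusion follows along the same lines as Lemmas \ref{honegredprop} and \ref{honegredpropC}.
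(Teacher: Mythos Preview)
Your proposal is correct and follows essentially the same approach as the paper: both use the exact sequence $1 \to (\gm)^{2m}/\gmu \to G_{\red} \to \prod_i \gPGO^{+}_{2n_i} \to 1$ (your $S$ is exactly $(\gm)^{2m}$ since $Z(\operatorname{\mathbf{\Omega}}_{2n_i}) = \gm^2$), apply Hilbert~90 to identify $H^1(K, G_{\red})$ with the kernel of the connecting map, factor this map through the Tits-algebra data $(C_{i,1}, C_{i,2})$, and detect vanishing by pairing with all characters $r \in R$. Your discussion of the $f_i$-dependent labeling ambiguity matches the paper's explicit ``$C_{i,1}$ or $C_{i,2}$'' and ``$(C_{i,1},C_{i,2})$ or $(C_{i,2},C_{i,1})$'' description of the map $\beta$.
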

\begin{proof}
	Let $G_{\red}=(\prod_{i=1}^{m}\operatorname{\mathbf{\Omega}}_{2n_{i}})/\gmu$, where $\operatorname{\mathbf{\Omega}}_{2n_{i}}$ denotes the extended Clifford group (\cite[\S 13]{KMRT}). Consider the exact sequence
	\[1\to (\gm)^{2m}/\gmu \to G_{\red}\to \prod_{i=1}^{m}\gPGO^{+}_{2n_{i}}\to 1,\]
	where $\gPGO^{+}_{2n_{i}}$ denotes the projective orthogonal group. Applying the same argument as in the proof of Lemma \ref{honegredprop} we see that the set $H^{1}(K,G_{\red})$ is bijective to the kernel of following map
	\[H^{1}(K, \prod_{i=1}^{m}\gPGO^{+}_{2n_{i}})\stackrel{\beta}\to \Br\big(Z(\prod_{i=1}^{m}\gSpin_{2n_{i}})\big)\stackrel{\tau}\to H^{2}(K,Z(\prod_{i=1}^{m}\gSpin_{2n_{i}})/\gmu),\] 
	where the map $\beta$ sends an $m$-tuple $\big((A_{i},\sigma_{i}, f_{i})\big)$ of triples consisting of a central simple $K$-algebra $A_{i}$ of degree $2n_{i}$ with orthogonal involution $\sigma_{i}$ of trivial discriminant and a $K$-algebra isomorphism $f_{i}: Z(C(A_{i}, \sigma_{i})\big)\simeq K\times K$ to the $m$-tuple $(B_{1}', \ldots, B_{m}')$ with  \[B_{i}':=\begin{cases} C_{i,1} \, \text{ or }\, C_{i,2} & \text{ if } n_{i} \text{ odd,}\\ (C_{i,1}, C_{i,2})  \,\text{ or }\, (C_{i,2}, C_{i,1}) & \text{ if } n_{i} \text{ even},\end{cases}\]
	depending on the choice of two isomorphisms $f_{i}$ for each triple $(A_{i},\sigma_{i}, f_{i})$ (i.e., For odd (resp. even) $n_{i}$, the image of $(A_{i},\sigma_{i}, f_{i})$  under $\beta$ is $C_{i,1}$ (resp. $(C_{i,1}, C_{i,2})$) if and only if the image of $(A_{i},\sigma_{i}, f'_{i})$ for another isomorphism $f'_{i}: Z(C(A_{i}, \sigma_{i})\big)\simeq K\times K$ under $\beta$ is $C_{i,2}$ (resp. $(C_{i,2}, C_{i,1})$)) and the map $\tau$ is induced by the natural surjection $Z(\prod_{i=1}^{m}\gSpin_{2n_{i}})\to Z(\prod_{i=1}^{m}\gSpin_{2n_{i}})/\gmu$. As $(B'_{1}, \ldots, B'_{m})\in \Ker(\tau)$ if and only if it is contained in the kernel of the composition
	 \begin{equation*}
	 H^{2}\big(K, Z(\prod_{i=1}^{m}\gSpin_{2n_{i}})\big)\stackrel{\tau}\to H^{2}\big(K,Z(\prod_{i=1}^{m}\gSpin_{2n_{i}})/\gmu\big)\stackrel{r_{*}}\to H^{2}(K, \gm)
	 \end{equation*}
	 for all $r\in R=(Z(\prod_{i=1}^{m}\gSpin_{2n_{i}})/\gmu)^{*}$, we obtain
	 \begin{equation}\label{HoneGredD}
	 H^{1}(K, G_{\red})\simeq \{\big((A_{i},\sigma_{i}, f_{i})\big)\,|\, \sum_{i=1}^{m}B_{i}=0 \text{ in } \Br(K)    \}   
	 \end{equation}
	 for all $\sum_{i=1}^{m}r'_{i}\in R$.\end{proof}

Recall from Theorem \ref{mainthmD} the following subsets
\begin{align*}
&I_{1}=\{i\,|\, Z_{i}=R_{1,i} \text{ or } R_{1,i}', n_{i}\neq 3\}\, \text{ and }\\
&I_{2}=\{i \,|\,R_{1,i}'=0,  4|n_{i}\}\cup \{i\,|\, R_{1,i}'=(\Z/2\Z)e_{i,1} \text{ or } (\Z/2\Z)e_{i,2}, n_{i}\geq 6, 4|n_{i}\}=:I_{21}\cup I_{22}.
\end{align*}
Let $i\in I_{1}$. Then, from Lemma \ref{honegredpropD}, we see that both $K$-algebras $A_{i}$ and $C(A_{i},\sigma_{i})$ split, thus we have $(A_{i},\sigma_{i},f_{i})\simeq (M_{2n_{i}}(K),\sigma_{\psi_{i}})$ for some adjoint involution $\sigma_{\psi_{i}}$ with respect to a quadratic form $\psi_{i}$ such that $\psi_{i}\in I^{3}(K)$. Hence, the Arason invariant $\boldsymbol{\mathrm{e}_{3}}$ induces the following invariant 
\begin{equation}\label{arasonethree}
\boldsymbol{\mathrm{e}}_{3, i}: H^{1}(K, G_{\red})\to H^{3}(K)
\end{equation}	
given by $\boldsymbol{\mathrm{e}}_{3, i}\big((A_{1},\sigma_{1}, f_{1}),\ldots, (A_{m},\sigma_{m}, f_{m})\big)=\boldsymbol{\mathrm{e}_{3}}(\psi_{i})$. This invariant is obviously nontrivial.

Now let $i\in I_{2}$. Then, the invariant $\Delta'$ of $\gPGO^{+}_{2n}$ (\cite[Theorem 4.7]{Mer163}) gives the following invariant of $G_{\red}$
\begin{equation}\label{merpgo}
\Delta_{i}': \begin{cases} H^{1}(K, G_{\red})\to H^{1}(K, \gPGO^{+}_{2n_{i}})\stackrel{\Delta'}\to H^{3}(K) & \text{ if } i\in I_{21},\\ H^{1}(K, G_{\red})\to H^{1}(K, \gHSpin_{2n_{i}})\to H^{1}(K, \gPGO^{+}_{2n_{i}})\stackrel{\Delta'}\to H^{3}(K) & \text{ if } i\in I_{22},\end{cases}
\end{equation}
where $\gHSpin_{2n_{i}}$ denotes the half-spin group and the first map in (\ref{merpgo}) is the projection for each case.

We shall need the following analogue of \cite[Example 3.1]{GPT}.

\begin{lemma}\label{lemmaforortho}
Let $Q$ be a quaternion algebra over $F$ and let $(A,\sigma, f)\in H^{1}(F,\gPGO^{+}_{2n})$ such that $n\equiv 0 \mod 2$ and $(A,\sigma)=(M_{n}(F)\tens Q, \sigma_{1}\tens \sigma_{2})$
for some orthogonal involutions $\sigma_{1}$ and $\sigma_{2}$ on $M_{n}(F)$ and $Q$, respectively. Then, we have $$\Delta'(A,\sigma, f)=Q\cup (\disc \sigma_{1}).$$
\end{lemma}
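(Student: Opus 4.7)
The plan is to follow the strategy of \cite[Example 3.1]{GPT} for the symplectic analogue, adapted to the orthogonal setting. Since both sides of the claimed identity are functorial in $F$ and respect isomorphisms of $\gPGO^{+}_{2n}$-torsors, a specialization argument should reduce the problem to a versal case that can be computed explicitly.

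First, I would set up the versal model: take $Q=(a,c)$ to be a generic quaternion algebra over a purely transcendental extension of $F$, and take $\sigma_{1}=\sigma_{b}$ to be the adjoint involution of a generic diagonal bilinear form $b=\langle u_{1},\ldots,u_{n}\rangle$ on $F^{n}$, with $\sigma_{2}$ a fixed orthogonal involution on $Q$. The observation $[A]=[M_{n}(F)\tens Q]=[Q]$ in $\Br(F)$ already pins down one factor of any cup-product formula for $\Delta'(A,\sigma,f)$, so the issue is to identify the remaining degree one character.

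Next, I would invoke the explicit description of $\Delta'$ from \cite[Theorem 4.7]{Mer163}. For a triple $(A,\sigma,f)$ with $\disc\sigma=1$, this invariant is built from the Brauer class $[A]$ cupped with a degree one character extracted from the Clifford algebra data $(C(A,\sigma),f)$---the piece of $\Inv^{3}(\gPGO^{+}_{2n})_{\ind}$ not captured by the Arason invariant $\boldsymbol{\mathrm{e}}_{3}$. For our tensor-product involution $\sigma=\sigma_{b}\tens\sigma_{2}$, I would then decompose the Clifford algebra: writing $\sigma_{2}=\operatorname{Int}(s)\circ\gamma$ for an invertible pure quaternion $s\in Q^{\times}$ (with $\gamma$ the canonical symplectic involution on $Q$), I would relate $(A,\sigma)$ to the symplectic pair $(A,\sigma_{b}\tens\gamma)$ treated in \cite[Example 3.1]{GPT}. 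This reduces the identification of the degree one character to a calculation involving only $b$, and by additivity with respect to orthogonal decompositions of $b$ one may even assume $n=2$, where a direct computation using \cite{KMRT} yields $(\disc b)=(\disc\sigma_{1})\in H^{1}(F,\Z/2\Z)$.

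The main obstacle will be this third step: making the degree one character extracted from $(C(A,\sigma),f)$ sufficiently explicit, and carefully tracking the effect of the twist $\sigma_{2}=\operatorname{Int}(s)\circ\gamma$ on the center $Z(C(A,\sigma))$. A sign error here would swap the two components of the center (equivalent to precomposing $f$ with the nontrivial automorphism), and thereby change $(\disc\sigma_{1})$ into its negative in $F^{\times}/F^{\times 2}$, which in characteristic $0$ still gives the correct class modulo squares but in general threatens the identification. Once these bookkeeping issues are controlled, the two factors assemble to give $\Delta'(A,\sigma,f)=Q\cup(\disc\sigma_{1})$, and the specialization principle delivers the formula in general.
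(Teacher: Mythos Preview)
Your approach is workable in spirit but far more elaborate than necessary, and several of its steps are left as promissory notes. In particular, the ``additivity with respect to orthogonal decompositions of $b$'' that you invoke to reduce to $n=2$ is not an obvious formal property of $\Delta'$; you would need to prove it, and at that point you are essentially re-deriving the general formula rather than specializing to it. Likewise, the Clifford-algebra bookkeeping you flag as the main obstacle is genuinely delicate, and your sketch does not indicate how you would actually extract the degree-one character from $Z(C(A,\sigma))$ in the tensor-product situation.

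The paper bypasses all of this. It uses directly the explicit formula for $\Delta'$ given in \cite[\S 4b]{Mer163}: if $y\in A^{\times}$ is $\sigma$-symmetric and one writes $\sigma=\operatorname{Int}(y)\circ\sigma_{0}$ for a suitable reference involution $\sigma_{0}$, then $\Delta'(A,\sigma,f)$ is $[A]\cup\bigl(\sqrt{\Nrd_{A}(y)}\bigr)$. One simply writes $\sigma_{1}=\operatorname{Int}(x)\circ t$ with $t$ the transpose on $M_{n}(F)$ and $x$ a $t$-symmetric unit, so that $\sigma=\operatorname{Int}(x\otimes 1)\circ(t\otimes\sigma_{2})$ with $x\otimes 1$ a $\sigma$-symmetric unit. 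Then $\Nrd_{A}(x\otimes 1)=\Nrd_{M_{n}(F)}(x)^{2}$, hence $\sqrt{\Nrd_{A}(x\otimes 1)}=\Nrd_{M_{n}(F)}(x)=\disc\sigma_{1}$, and the formula drops out immediately. No versal objects, no Clifford decomposition, no reduction to small $n$, no specialization. Your detour through \cite[Example 3.1]{GPT} and the symplectic comparison is a reasonable heuristic for \emph{guessing} the answer, but for the proof the direct reduced-norm computation is both shorter and cleaner.
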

\begin{proof}
Let $t$ be the transpose involution on $M_{n}(F)$. Since $\sigma_{1}=\operatorname{Int}(x)\circ t$ for some $t$-symmetric invertible element $x$, where $\operatorname{Int}(x)$ denotes the inner automorphism induced by $x$, we have
\[ \disc(\sigma_{1})=\Nrd_{M_{n}(F)}(x)=\sqrt{\Nrd_{A}(x\tens  1)}    \]
and $\sigma=\operatorname{Int}(x\tens 1)\circ (t\tens \sigma_{2})$, where $\Nrd$ denotes the reduced norm. As $x\tens 1$ is a $\sigma$-symmetric invertible element, the result follows from \cite[\S 4b]{Mer163}.
\end{proof}

\begin{proposition}\label{formcorD}
	Let $G=(\prod_{i=1}^{m}\gSpin_{2n_{i}})/\gmu$ defined over an algebraically closed field $F$, where $m \geq 1$, $n_{i}\geq 3$, $\gmu$ is a central subgroup. Set $G_{\red}=(\prod_{i=1}^{m}\operatorname{\mathbf{\Omega}}_{2n_{i}})/\gmu$, where $\operatorname{\mathbf{\Omega}}_{2n_{i}}$ is the extended Clifford group. Then, every normalized invariant in $\Inv^{3}(G_{\red})$ is of the form 
	\begin{equation}\label{DinvariantformD}
	\sum_{i\in I_{1}'}\boldsymbol{\mathrm{e}}_{3, i}+\sum_{i\in I_{2}'}\Delta_{i}'+\sum_{r\in R''}\boldsymbol{\mathrm{e}}_{3}(\phi[r])
	\end{equation}
	for some subsets $I_{1}'\subseteq I_{1}$, $I_{2}'\subseteq I_{2}$, and $R''\subseteq R'$, where $R'$ denotes the group as defined in Theorem \ref{mainthmD}, $\phi[r]$ is the quadratic form defined in $($\ref{phiR}$)$ and $\boldsymbol{\mathrm{e}}_{3}: I^{3}(K)\to H^{3}(K)$ denotes the Arason invariant for a field extension $K/F$. Moreover, we have
	\begin{equation*}
	\Inv^{3}(G_{\red})_{\norm}\simeq \frac{\bigoplus_{i\in I_{1}\cup I_{2}}(\Z/2\Z) \bar{e}_{i}\bigoplus R'}{\langle \bar{e}_{i},\, \bar{e}_{j}+\bar{e}_{k}\in R'\,|\, \bar{e}_{j}, \bar{e}_{k}\not\in R',\, n_{j}\equiv n_{k}\equiv 1\mod 2\rangle}.
	\end{equation*}
\end{proposition}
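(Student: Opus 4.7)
The proof will parallel Propositions \ref{formcor} and \ref{formcorC}. Since $F$ is algebraically closed, $\Inv^{3}(G_{\red})_{\norm} = \Inv^{3}(G_{\red})_{\ind}$, so by Theorem \ref{mainthmD} the target group has order $2^{s_{1}+s_{2}+l-l_{1}-l_{2}}$. The plan is to exhibit the $s_{1}+s_{2}+l$ candidate generators listed in the statement, verify that each lies in $\Inv^{3}(G_{\red})_{\norm}$, show that the stated kernel maps to zero, and then establish injectivity of the resulting map from the quotient by constructing torsors with nonzero residues at appropriate discrete valuations.

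For well-definedness, if $i\in I_{1}$ then Lemma \ref{honegredpropD} combined with the condition $Z_{i}=R_{1,i}$ or $R'_{1,i}$ forces both $A_{i}$ and $C(A_{i},\sigma_{i})$ to split on every $G_{\red}$-torsor, so $(A_{i},\sigma_{i},f_{i})\simeq(M_{2n_{i}}(K),\sigma_{\psi_{i}})$ with $\psi_{i}$ of dimension $2n_{i}$, trivial discriminant, and split Clifford algebra; whence $\psi_{i}\in I^{3}(K)$ by \cite[Theorem 14.3]{EKM}. If $i\in I_{2}$, the condition on $R'_{1,i}$ makes the projection $G_{\red}\to\gPGO^{+}_{2n_{i}}$ well defined (factoring through $\gHSpin_{2n_{i}}$ when $i\in I_{22}$), and $\Delta'_{i}$ pulls back the invariant of \cite[Theorem 4.7]{Mer163}. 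For $r\in R'$, each $\phi_{i}$ in (\ref{phiR}) has trivial discriminant and Hasse invariant equal to $[A_{i}]$ in $\Br(K)$, so the Brauer relation of Lemma \ref{honegredpropD} forces $\phi[r]\in I^{3}(K)$.

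For the vanishing relations, if $\bar{e}_{i}\in R'$ then either $R_{1,i}=2Z_{i}$ (odd $n_{i}$ with $4\nmid n_{i}$) or $R'_{1,i}=(\Z/2\Z)(e_{i,1}+e_{i,2})$ (even $n_{i}$ with $4\nmid n_{i}$); in both situations the Brauer class of the relevant component of $C(A_{i},\sigma_{i})$ is forced to be trivial, and $\phi_{i}$ takes a shape whose Arason invariant vanishes by the same low-dimensional argument as in Proposition \ref{formcor}. If $\bar{e}_{j}+\bar{e}_{k}\in R'$ with $n_{j},n_{k}$ odd and $\bar{e}_{j},\bar{e}_{k}\notin R'$, then the Brauer relation gives $[A_{j}]=[A_{k}]$, so $\phi_{j}\perp\phi_{k}$ is congruent to a Pfister form times hyperbolic and vanishes in $H^{3}$ analogously to (\ref{tijvanish}).

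The main obstacle is establishing independence of the generators modulo these relations, which I will carry out in three coordinated parts. First, for each $i\in I_{1}$ I construct a $G_{\red}$-torsor over $K((z))$ with $\psi_{i}=\langle\langle x,y,z\rangle\rangle$ and all other triples split, so that $\partial_{z}(\boldsymbol{\mathrm{e}}_{3,i})=(x,y)$ while every other candidate invariant is unramified at $z$. Second, for each $i\in I_{2}$ I invoke Lemma \ref{lemmaforortho} with $(A_{i},\sigma_{i})\simeq(M_{n_{i}}(E)\tens Q,\sigma_{1}\tens\gamma)$ over $E=K((z))$ and $\disc(\sigma_{1})=z$, which gives a nonzero residue for $\Delta'_{i}$; the most delicate subcase is $i\in I_{22}$, where one must verify that the chosen triple lifts compatibly along $\gHSpin_{2n_{i}}\to\gPGO^{+}_{2n_{i}}$ and that the split choices for the other components keep every competing invariant trivial. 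Third, for $r\in R'$ outside the span of the single $\bar{e}_{i}$-relations and the $\bar{e}_{j}+\bar{e}_{k}$-relations, I adapt the basis bookkeeping of Lemma \ref{lemmaunramiC} (choosing suitable bases $B_{2}$ and $B_{3}$ of $R'$ and a complement, then constructing torsors following the Case 1/Case 2 recipe with $(A_{i},\sigma_{i})$ of the shape $(M_{n_{i}}(E)\tens Q,t\tens\gamma)$) to extract a nonzero residue. Combining these three parts with the order count from Theorem \ref{mainthmD} completes the proof.
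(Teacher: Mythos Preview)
Your overall architecture matches the paper's: reduce to indecomposable invariants, exhibit candidates, verify the kernel, and count orders against Theorem \ref{mainthmD}. The well-definedness and vanishing paragraphs are essentially correct. However, your independence arguments have genuine gaps that the paper's proof handles with different, nontrivial constructions.

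First, a type mismatch: in both your $\Delta'_i$ step and your adaptation of Lemma \ref{lemmaunramiC} you write $(M_{n_i}(E)\otimes Q,\sigma_1\otimes\gamma)$ and $(M_{n_i}(E)\otimes Q,t\otimes\gamma)$, but the canonical involution $\gamma$ on $Q$ is symplectic, so these tensor products are symplectic involutions, not orthogonal ones. Lemma \ref{lemmaforortho} and the whole $\gPGO^+$ setting require both tensor factors to carry orthogonal involutions. This is more than notational: it signals that the type $C$ recipe does not transplant verbatim.

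Second, and more seriously, your plan to make one $(A_i,\sigma_i)$ nontrivial and keep all other components split does not in general produce a $G_{\red}$-torsor. The Brauer relations in Lemma \ref{honegredpropD} run over all of $R$, and elements of $R$ can mix the index $i$ with other indices; with the other components split, such a relation forces conditions on the Clifford components $C_{i,1},C_{i,2}$ that your construction does not control. The paper resolves this by building \emph{all} $(A_j,\sigma_j)$ simultaneously with prescribed Clifford components (quaternions $Q_{j,1},Q_{j,2}$ for even $n_j$, cyclic algebras $P_j$ of degree $4$ for odd $n_j$, the latter via the exterior square $(\lambda^2 P_j,\rho_j)$ so that $C(\lambda^2 P_j,\rho_j)=P_j\times P_j^{\op}$), and then passes to the function field of $\prod_{r\in R}\SB(D_r)$ to force every relation in $R$ to hold. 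This Severi-Brauer step is precisely what lets one keep $\Delta'_j(\eta)=(x_{j,1},x_{j,2},y_j)$ nonzero while satisfying the full system of constraints.

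Third, for the nontriviality of $\boldsymbol{\mathrm{e}}_3(\phi[r])$ the paper does not adapt Lemma \ref{lemmaunramiC} directly. Instead it reduces to type $A$: replacing each factor by $\gGL_4$ (odd $n_i$) or $\gGL_2\times\gGL_2$ ($n_i\equiv 2\bmod 4$), it imports a $G'_{\red}$-torsor $\eta'$ from \cite[Proposition 4.1]{Mer17} with $\partial_z(\boldsymbol{\mathrm{e}}_3(\theta[r])(\eta'))\neq 0$, and then uses the exterior-square and biquaternion constructions (together with the Clifford computations of \cite{Tao} and \cite{DLT}) to lift $\eta'$ to a $G_{\red}$-torsor $\eta$ with $\boldsymbol{\mathrm{e}}_3(\phi[r])(\eta)=\boldsymbol{\mathrm{e}}_3(\theta[r])(\eta')$. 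The identification of $T^{\pm}_{\gamma_{i1}\otimes\gamma_{i2}}$ with $T_{\gamma_{i1}}\perp T_{\gamma_{i2}}$ modulo $I^4$ is what makes the comparison go through. Your basis-bookkeeping approach would need, at minimum, an orthogonal analogue of the torsors in (\ref{chocaseoneC})--(\ref{typeCcasetwoano}) whose Clifford components satisfy the full $R$-relations, and for odd $n_i$ there is no obvious way to do this without something like $\lambda^2 P_i$.
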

\begin{proof}
Since $F$ is algebraically closed, we obtain $\Inv^{3}(G_{\red})_{\norm}=\Inv^{3}(G_{\red})_{\ind}$. We first show that the invariant $\Delta_{j}'$ is nontrivial for all $j\in I_{2}$. Choose a field extension $K/F$ containing variables $x_{i,1}$, $x_{i,2}$, $x_{i}$, $y_{i}$, division quaternion $K$-algebras
\[Q_{i,1}=(x_{i,1}, y_{i}),\, Q_{i,2}=(x_{i,2}, y_{i}) \]
for all $i\in I$ such that $n_{i}\equiv 0 \mod 2$, and cyclic division $K$-algebras 
\[P_{i}=(x_{i}, y_{i})_{4}\]
of exponent $4$ for all $i\in I$ such that $n_{i}\equiv 1 \mod 2$. Let 
\[Q_{i}=\begin{cases}(x_{i,1}x_{i,2},\, y_{i}) & \text{ if } n_{i} \text{ even,}\\ (x_{i},\, y_{i}) & \text{ if } n_{i} \text{ odd},\end{cases} \text{ so that } Q_{i}=\begin{cases}Q_{i,1}+Q_{i,2} & \text{ if } n_{i} \text{ even,}\\ 2P_{i} & \text{ if } n_{i} \text{ odd}\end{cases} \]
in $\Br(K)$. For $r\in R$, let
\[D_{1,r}=\bigotimes_{2|n_{i}}(Q_{i,1}^{r_{i,1}}\tens Q_{i,2}^{r_{i,2}}),\,\, D_{2,r}=\bigotimes_{2\nmid n_{i}}P_{i}^{r_{i}}, \text{ and } D_{r}=D_{1,r}\tens D_{2,r}.\]
Let $L$ be the function field of the product $\prod_{r\in R}\SB(D_{r})$ of Severi-Brauer varieties $\SB(D_{r})$ of $D_{r}$ over $K$. For all $i$ such that $n_{i}\equiv 1 \mod 2$, consider the exterior square $\lambda^{2}P_{i}$ of $P_{i}$ with its canonical involution $\rho_{i}$ \cite[\S 10]{KMRT}. By the exceptional isomorphism $A_{3}=D_{3}$ (\cite[15.32]{KMRT}) we have
\begin{equation}\label{exteriorsquare}
C(\lambda^{2}P_{i}, \rho_{i})=P_{i}\times P_{i}^{\op},
\end{equation}
where $P_{i}^{\op}$ denotes the opposite algebra of $P_{i}$. Let $\chi_{i}$ be a skew-hermitian form over $Q_{i}$ such that $(M_{3}(Q_{i}), \sigma_{\chi_{i}})=(\lambda^{2}P_{i}, \rho_{i})$, where $\sigma_{\chi_{i}}$ is the adjoint involution with respect to $\chi_{i}$. Let $\psi_{i}=\chi_{i}\perp h$ be a skew-hermitian form over $Q_{i}$ of rank $n_{i}$, where $h$ denotes a hyperbolic form (if $n_i=3$, then $\psi_{i}=\chi_{i}$). We denote by $\sigma_{\psi_{i}}$ the adjoint involution on $M_{n_{i}}(Q_{i})$ with respect to $\psi_{i}$. Let
\[(A_{i}, \sigma_{i})=\begin{cases} (M_{n_{i}}(L)\tens Q_{i}, \sigma_{i,1}\tens \sigma_{i,2} ) & \text{ if } n_{i} \text{ even,}\\ (M_{n_{i}}(Q_{i}), \sigma_{\psi_{i}} ) & \text{ if } n_{i} \text{ odd}\end{cases} \]
for some orthogonal involutions $\sigma_{i,1}$ on $M_{n_{i}}(L)$ and $\sigma_{i,2}$ on $Q_{i}$ such that $\disc(\sigma_{i,1})=x_{i,1}$ and $\disc(\sigma_{i,2})=y_{i}$. Then, by \cite[Theorem 1.1]{Tao} and \cite[Corollary 3]{DLT} together with (\ref{exteriorsquare}) we obtain
\[C(A_{i},\sigma_{i})=\begin{cases} M_{2^{n_{i}-2}}(Q_{i,1})\times M_{2^{n_{i}-2}}(Q_{i,2}) & \text{ if } n_{i} \text{ even,}\\ M_{2^{n_{i}-3}}(P_{i})\times M_{2^{n_{i}-3}}(P_{i})^{\op} & \text{ if } n_{i} \text{ odd},\end{cases} \]
thus by a theorem of Amitsur we have a $G_{\red}(L)$-torsor $\eta=\big((A_{i},\sigma_{i}, f_{i})\big)$. Finally, by Lemma \ref{lemmaforortho} we get $\Delta_{j}'(\eta)=(x_{j,1}, x_{j,2}, y_{j})$. As $j\in I_{2}$, we have either $\partial_{x_{j,1}}(\Delta_{j}'(\eta))=(x_{j,2}, y_{j})\neq 0$ or $\partial_{x_{j,2}}(\Delta_{j}'(\eta))=(x_{j,1}, y_{j})\neq 0$, thus $\Delta_{j}'(\eta)\neq 0$.

Now, let $r=(\bar{r}_{1},\ldots, \bar{r}_{m})\in R'$. Then, from Lemma \ref{honegredpropD} we have
\[B_{i}=A_{i}=\begin{cases} 2\bar{r}_{i}C_{i,1}=2\bar{r}_{i}C_{i,2} & \text{ if }n_{i} \text{ odd},\\
\bar{r}_{i}C_{i,1}+\bar{r}_{i}C_{i,2} & \text{ if } n_{i} \text{ even},\end{cases}
\]
in $\Br(K)$, thus the relation in (\ref{HoneGredD}) is equivalent to 
\begin{equation}\label{typeDaiam}
\bar{r}_{1}A_{1}+\cdots +\bar{r}_{m}A_{m}=0.
\end{equation}

As each quadratic form $\phi_{i}$ in (\ref{phiR}) has even dimension and trivial discriminant, we have $\phi[r]\in I^{2}(K)$ for each $r\in R'$. By \cite[Theorem 1]{Que} the Hasse invariant of $\phi_{i}$ in (\ref{phiR}) coincides with the class of $A_{i}$ in $\Br(K)$, thus by the relation in (\ref{typeDaiam}), we have $\phi[r]\in I^{3}(K)$ for each $r\in R'$. Therefore, the Arason invariant induces a normalized invariant $\boldsymbol{\mathrm{e}}_{3}(\phi[r])$ of order dividing $2$ that sends an $m$-tuple in (\ref{HoneGredD}) to $\boldsymbol{\mathrm{e}}_{3}(\phi[r])\in H^{3}(K)$.

Let $r\in \bar{R}_{1}''+\bar{R}_{2}''$, where $\bar{R}_{1}''=\langle \bar{e}_{i}\in R'\rangle$ and $\bar{R}_{2}''=\langle \bar{e}_{j}+\bar{e}_{k}\in R'\,|\, \bar{e}_{j}, \bar{e}_{k}\not\in R', n_{j}\equiv n_{k}\equiv 1 \mod 2\rangle$. Then, by (\ref{tijvanish}) both invariants $\boldsymbol{\mathrm{e}}_{3}(\phi[\bar{e}_{i}])$ and $\boldsymbol{\mathrm{e}}_{3}(\phi[\bar{e}_{j}+\bar{e}_{k}])$ vanish for any $\bar{e}_{i}\in \bar{R}_{1}''$ and any $\bar{e}_{j}+\bar{e}_{k}\in \bar{R}_{2}''$, thus $\boldsymbol{\mathrm{e}}_{3}(\phi[r])$ vanishes.

As before, by Theorem \ref{mainthmD} it is enough to show that the invariant $\boldsymbol{\mathrm{e}}_{3}(\phi[r])$ is nontrivial for any $r\in R'\backslash (\bar{R}_{1}''+\bar{R}_{2}'')$. Let $G'_{\red}=(\prod_{i=1}^{m}G'_{i})/\gmu$, where 
\[G_{i}'=\begin{cases} \gGL_{4} & \text{ if } n_{i}\equiv 1 \mod 2,\\
\gGL_{2}\times \gGL_{2} & \text{ if } n_{i}\equiv 2 \mod 4\end{cases}
\]
and let $r=(\bar{r}_{1},\ldots, \bar{r}_{m})\in R'\backslash (\bar{R}_{1}''+\bar{R}_{2}'')$. Then, by the proof of \cite[Proposition 4.1]{Mer17}, there exists a $G'_{\red}(L')$-torsor $\eta':=(A_{i}')$ for some power series field $L':=K((z))$, where 
\[A_{i}'=\begin{cases} P_{i}' & \text{ if } n_{i}\equiv 1 \mod 2,\\
(Q_{i1}', Q_{i2}') & \text{ if } n_{i}\equiv 2 \mod 4\end{cases}
\]
for some simple algebras $P_{i}'$ of degree $4$ and quaternion algebras $Q_{i1}'$ and $Q_{i2}'$, such that $\partial_{z}\big(\boldsymbol{\mathrm{e}}_{3}(\theta[r])(\eta')\big)\neq 0$, where 
\[\theta[r]:=\perp_{i=1}^{m}\bar{r}_{i}\theta_{i},\,\, \theta_{i}=\begin{cases} T_{\gamma_{i}} & \text{ if } n_{i}\equiv 1 \mod 2,\\
T_{\gamma_{i1}}\perp T_{\gamma_{i1}}  & \text{ if } n_{i}\equiv 2 \mod 4,\end{cases}
\]
$\gamma_{i1}$ and $\gamma_{i2}$ denote the canonical involutions on $Q_{i1}'$ and $Q_{i2}'$, and $\gamma_{i}$ denotes the canonical involution on the quaternion algebra $Q_{i}'$ representing the class of $(P_{i}')^{\tens 2}$.

Now we shall find a $G_{\red}$-torsor $\eta$ such that the value of the invariant $\boldsymbol{\mathrm{e}}_{3}(\phi[r])$ at $\eta$ coincides with the value of the invariant $\boldsymbol{\mathrm{e}}_{3}(\theta[r])$ at $\eta'$. Let 
\[(A_{i}, \sigma_{i})=\begin{cases} \big(M_{n_{i}}(Q_{i}'), \sigma_{\psi_{i}}  \big) & \text{ if } n_{i}\equiv 1 \mod 2,\\
\big(M_{n_{i}/2}(Q_{i1}'\tens Q_{i2}'), t\tens \gamma_{i1}\tens \gamma_{i2} \big) & \text{ if } n_{i}\equiv 2 \mod 4,\end{cases}
\]
where $\sigma_{\psi_{i}}$ denotes the adjoint involution on $M_{n_{i}}(Q_{i}')$ with respect to $\psi_{i}$ as above and $t$ denotes the transpose involution on $M_{n_{i}/2}(L')$. Then, as $\eta'$ is a $G_{\red}'(L')$-torsor and 
\[C(A_{i},\sigma_{i})=\begin{cases}M_{2^{n_{i}-3}}(P_{i}')\times M_{2^{n_{i}-3}}(P_{i}')^{\op} & \text{ if } n_{i}\equiv 1 \mod 2,\\ M_{2^{n_{i}-2}}(Q_{i,1}')\times M_{2^{n_{i}-2}}(Q_{i,2}') & \text{ if } n_{i}\equiv 2 \mod 4, \end{cases} \]
we obtain a $G_{\red}(L')$-torsor $\eta:=\big((A_{i},\sigma_{i},f_{i})\big)$.

Let $T^{-}_{\gamma_{i1}\tens \gamma_{i2}}$ denote the restriction of $T_{\gamma_{i1}\tens \gamma_{i2}}$ to $\operatorname{Skew}(Q_{i1}'\tens Q_{i2}', \gamma_{i1}\tens \gamma_{i2})$. By a direct calculation we have
\[\phi_{i}=\begin{cases} T_{\gamma_{i}}\perp h & \text{ if } n_{i}\equiv 1 \mod 2,\\
T^{+}_{\gamma_{i1}\tens \gamma_{i2}}\perp h\,\, (\text{resp.}\,\, T^{-}_{\gamma_{i1}\tens \gamma_{i2}}\perp h) & \text{ if } n_{i}\equiv 2 \mod 8 \,\,(\text{resp.}\,\, n_{i}\equiv 6 \mod 8)\end{cases}
\]
for some hyperbolic forms $h$, where $\phi_{i}$ denotes the quadratic form defined in (\ref{phiR})

Since $T^{-}_{\gamma_{i1}\tens \gamma_{i2}}=T_{\gamma_{i1}}\perp T_{\gamma_{i2}}$ in the Witt ring of $L'$, $T^{+}_{\gamma_{i1}\tens \gamma_{i2}}\equiv T_{\gamma_{i1}}\perp T_{\gamma_{i2}} \mod I^{4}(L')$ (see \cite[Example 11.3]{KMRT}), and $\operatorname{Ker}(\boldsymbol{\mathrm{e}}_{3})=I^{4}(L')$, where $I^{4}(L')$ denotes the $4$th power of the fundamental ideal, we have $\boldsymbol{\mathrm{e}}_{3}(\phi[r])(\eta)=\boldsymbol{\mathrm{e}}_{3}(\theta[r])(\eta')$, thus $\partial_{z}\big(\boldsymbol{\mathrm{e}}_{3}(\phi[r])(\eta)\big)\neq 0$, which completes the proof.\end{proof}

Finally, we prove the second main result on the group of unramified degree $3$ invariants for type $D$.

\begin{theorem}\label{secthmD}
	Let $G=(\prod_{i=1}^{m}\gSpin_{2n_{i}})/\gmu$ defined over an algebraically closed field $F$, $m\geq 1$, $n_{i}\geq 3$, where $\gmu$ is a central subgroup. Then, every unramified degree $3$ invariant of $G$ is trivial, i.e., $\Inv^{3}_{\nr}(G)=0$.
\end{theorem}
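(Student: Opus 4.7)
Following the template of Theorems \ref{secthm} and \ref{secthmC}, the first reduction is to the reductive envelope $G_\red = (\prod_{i=1}^m \operatorname{\mathbf{\Omega}}_{2n_i})/\gmu$. Since $BG$ and $BG_\red$ are stably birational, the Rost isomorphism (\ref{Rostisomrphismm}) yields $\Inv^3_\nr(G) = \Inv^3_\nr(G_\red)$, so the task is to show $\Inv^3_\nr(G_\red) = 0$.

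The plan is to establish a ramification lemma analogous to Lemmas \ref{lemmaunrami} and \ref{lemmaunramiC}: every normalized invariant of $G_\red$ written in the form (\ref{DinvariantformD}) is ramified as soon as it involves a nonzero $\Delta_i'$ summand (forcing $n_i\geq 4$), an $\boldsymbol{\mathrm{e}}_{3,i}$ summand with $n_i\geq 4$, or an Arason summand $\boldsymbol{\mathrm{e}}_3(\phi[r])$ in which some $\bar e_i$ with $n_i\geq 4$ appears, or some pair $\bar e_j+\bar e_k$ with $n_jn_k$ even appears. I would fix a division quaternion algebra $Q=(x,y)$ over a field extension $K/F$, work over the Laurent series field $E=K((z))$, and construct in each case, via the torsor classification in Lemma \ref{honegredpropD}, a $G_\red(E)$-torsor $\eta$ tailored so that exactly one summand survives and $\partial_z(\alpha(\eta))=(x,y)\ne 0$. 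For the $\Delta_i'$ summand, Lemma \ref{lemmaforortho} together with the torsor construction already used in the proof of Proposition \ref{formcorD} provides the required $\eta$; for an $\boldsymbol{\mathrm{e}}_{3,i}$ summand with $n_i\geq 4$, I would take the split-algebra component carrying the adjoint involution of $\langle\langle x,y,z\rangle\rangle\perp h$; for the Arason summands I would adapt the combinatorial bookkeeping on bases of $R'$ from the proof of Lemma \ref{lemmaunrami} to the type $D$ characters. The main obstacle will be coordinating the Brauer-class relations imposed by Lemma \ref{honegredpropD} on all $r\in R$ with the local computation at $z$: one has to realize the prescribed residue while keeping every other relation trivial, exactly the technical point that dominates the type $B$ and type $C$ arguments.

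Once the ramification lemma is in hand, the residual invariants depend only on components with $n_i=3$, and hence factor through the restriction to $G'_\red=(\operatorname{\mathbf{\Omega}}_6)^m/\gmu$ coming from the standard embeddings $\operatorname{\mathbf{\Omega}}_6\hookrightarrow \operatorname{\mathbf{\Omega}}_{2n_i}$. Combined with $G'=(\gSpin_6)^m/\gmu$ this gives the commutative square
\[
\xymatrix{
\Inv^{3}(G) \ar@{->}[r] & \Inv^{3}(G')  \\
\Inv^{3}(G_{\red})  \ar@{^{(}->}[u]\ar@{->}[r] & \Inv^{3}(G'_{\red}) \ar@{^{(}->}[u]
}
\]
in which the bottom arrow becomes an isomorphism after quotienting by ramified invariants, by the lemma together with Proposition \ref{formcorD}. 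The exceptional isomorphism $A_3=D_3$ identifies $G'_\red$ with a split semisimple reductive group of type $A$, and Merkurjev's theorem $\Inv^{3}_{\nr}=0$ for type $A$ (\cite{Mer17}) then gives $\Inv^3_\nr(G'_\red)=0$. Consequently every normalized invariant of $G_\red$ is ramified, proving $\Inv^3_\nr(G_\red)=0$ as desired.
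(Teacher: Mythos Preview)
Your treatment of the summands $\boldsymbol{\mathrm{e}}_{3,i}$ and $\Delta_i'$ coincides with the paper's first two cases. The gap is in the final reduction step. You propose to pass to $G'_{\red}=(\operatorname{\mathbf{\Omega}}_{6})^{m}/\gmu$ via standard embeddings $\operatorname{\mathbf{\Omega}}_{6}\hookrightarrow \operatorname{\mathbf{\Omega}}_{2n_{i}}$, in exact parallel with types $B$ and $C$. But that parallel breaks down here: for even $n_{i}$ the center of $\gSpin_{2n_{i}}$ is $\gmu_{2}\times\gmu_{2}$, whereas the center of $\gSpin_{6}$ is $\gmu_{4}$. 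The standard embedding does not induce an isomorphism of centers in the even case, so an arbitrary central subgroup $\gmu\subset Z(\prod_{i}\gSpin_{2n_{i}})$ need not admit any interpretation as a central subgroup of $(\gSpin_{6})^{m}$; the quotient $(\operatorname{\mathbf{\Omega}}_{6})^{m}/\gmu$ is then not defined and the morphism $G'_{\red}\to G_{\red}$ you need for the commutative square does not exist. In types $B$ and $C$ this problem is absent because every factor has center $\gmu_{2}$ and the rank-one embedding is the identity on centers.

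The paper sidesteps this entirely: it does not reduce to a smaller group. After disposing of the cases $I_{1}'\neq\emptyset$ and $I_{2}'\neq\emptyset$ by the torsor constructions you correctly outline, the paper simply observes that when $I_{1}'=I_{2}'=\emptyset$ the invariant is a single $\boldsymbol{\mathrm{e}}_{3}(\phi[r])$, and its ramification was already established in the proof of Proposition~\ref{formcorD}. That proof uses $A_{3}=D_{3}$ and \cite[Proposition~4.1]{Mer17} only \emph{inside the construction of a torsor} (via $(\lambda^{2}P_{i},\rho_{i})$ on each odd component separately), not via a global homomorphism of reductive groups; this is what circumvents the center incompatibility. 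So the fix is to drop the square and invoke that part of the proof of Proposition~\ref{formcorD} directly.
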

\begin{proof}
	Let $G_{\red}=(\prod_{i=1}^{m}\operatorname{\mathbf{\Omega}}_{2n_{i}})/\gmu$. As $BG$ is stably birational to $BG_{\red}$, it suffices to show that every nontrivial invariant of $G_{\red}$ is ramified. Let $\alpha$ be a normalized invariant in $\Inv^{3}(G_{\red})$ be written as in (\ref{DinvariantformD}) for some subsets $I_{1}'\subseteq I_{1}$, $I_{2}'\subseteq I_{2}$ and $R''\subseteq R'$.

	First, assume that there exists $j\in I'_{1}$. Let $Q=(x, y)$ be a division quaternion algebra over a field extension $K/F$ and let $\psi_{j}=\langle\langle x, y, z\rangle\rangle\perp h$ be a quadratic form over $E:=K((z))$, where $h$ denotes a hyperbolic form. Choose a $G_{\red}$-torsor $\eta=\big((A_{1},\sigma_{1}, f_{1}),\ldots, (A_{m},\sigma_{m}, f_{m})\big)$ such that
	\[(A_{j},\sigma_{j})=(M_{2n_{j}}(E), \sigma_{\psi_{j}}) \text{ and }  (A_{i},\sigma_{i})=(M_{2n_{i}}(E), t)   \]
	for all $1\leq i\neq j\leq m$, where $\sigma_{\psi_{j}}$ denotes the adjoint involution on $M_{2n_{j}}(E)$ with respect to $\psi_{j}$ and $t$ denotes the transpose involution on $M_{2n_{i}}(E)$. Then, we have
	\[\sum_{i\in I_{1}'}\boldsymbol{\mathrm{e}}_{3, i}(\eta)=(x, y, z),\,\,  \sum_{i\in I_{2}'}\Delta_{i}'(\eta)=\sum_{r\in R''}\boldsymbol{\mathrm{e}}_{3}(\phi[r])(\eta)=0.\]
	Therefore, we have $\partial_{z}(\alpha(\eta))=(x, y)\neq 0$. Hence, the invariant $\alpha$ ramifies.

	We assume that $I_{1}'=\emptyset$ and $I_{2}'\neq \emptyset$, i.e., $\alpha(\eta)=\sum_{i\in I_{2}'}\Delta_{i}'+\sum_{r\in R''}\boldsymbol{\mathrm{e}}_{3}(\phi[r])$. Let $j\in I_{2}'$ and let $\eta=\big((A_{1},\sigma_{1}, f_{1}),\ldots, (A_{m},\sigma_{m}, f_{m})\big)$ be a $G_{\red}$-torsor over $L$ as in the proof of Proposition \ref{formcorD}. Then, we have either
	\[\partial_{x_{j,1}}\big(\alpha(\eta)\big)=\partial_{x_{j,1}}\big(\Delta_{j}'(\eta)\big)\neq 0 \text{ or } \partial_{x_{j,2}}\big(\alpha(\eta)\big)=\partial_{x_{j,2}}\big(\Delta_{j}'(\eta)\big)\neq 0,\]
	thus the invariant $\alpha$ ramifies.

	Now we may assume that $I_{1}'=I_{2}'=\emptyset$. Then, the invariant $\alpha$ is ramified as shown in the last part of the proof of Proposition \ref{formcorD}.\end{proof}


\begin{thebibliography}{10}


\bibitem{Baek}
S.~Baek, \emph{Chow groups of products of Severi-Brauer varieties and invariants of degree $3$}, Trans.~Amer.~Math.~Soc. \textbf{369} (2017), no.~3, 1757--1771.


\bibitem{BRZ}
S.~Baek, R.~Devyatov, and K.~Zainoulline, \emph{The $K$-theory of versal flags and cohomological invariants of degree $3$}, Doc. Math. \textbf{22} (2017), 1117--1148.

\bibitem{BMT}
G.~Berhuy, M.~Monsurr\`o, and J.-P.~Tignol, \emph{The discriminant of a symplectic involution},
Paciﬁc J. Math. \textbf{209} (2003), 201--218.

\bibitem{BR}
H.~Bermudez and A.~Ruozzi, \emph{Degree 3 cohomological invariants of groups that are neither simply connected nor adjoint}, J.~Ramanujan Math. Soc. \textbf{29} (2014), no.~4, 465--481.

\bibitem{Bog}
F.~A.~Bogomolov, \emph{The Brauer group of quotient spaces of linear representations}, Izv. Akad.
Nauk SSSR Ser. Mat. \textbf{51} (1987), no.~3, 485--516, 688.

\bibitem{CS}
J.-L.~Colliot-Th\'el\`ene and J.-J.~Sansuc, \emph{The rationality problem for fields of invariants under
linear algebraic groups $($with special regards to the Brauer group$)$}, Algebraic groups and
homogeneous spaces, Tata Inst. Fund. Res. Stud. Math., Tata Inst. Fund. Res., Mumbai, 2007,
pp. 113--186.

\bibitem{DLT}
I.~Dejaiffe, D.~W.~Lewis, and J.-P.~Tignol, \emph{Witt equivalence of central simple
algebras with involution}, Rend. Circ. Mat. Palermo (2) \textbf{49} (2000), 325--342.

\bibitem{Tao}
D.~Tao, \emph{The Generalized Even Clifford Algebra}, J. Algebra, \textbf{172} (1995), 184--204.

\bibitem{EKM}
R.~Elman, N.~Karpenko, and A.~Merkurjev, \emph{The algebraic and geometric theory of
quadratic forms}, American Mathematical Society, Providence, RI, (2008).

\bibitem{GMS}
S.~Garibaldi, A.~Merkurjev, J.-P.~Serre, \emph{Cohomological Invariants in Galois Cohomology}, University Lecture Series \textbf{28}, AMS, Providence, RI, (2003).

\bibitem{GPT} S.~Garibaldi, R.~Parimala and J.-P.~Tignol, \emph{Discriminant of symplectic involutions}, Pure Appl.
Math. Q. \textbf{5} (2009), 349--374.


\bibitem{KMRT} M.-A.~Knus, A.~Merkurjev, M. Rost, and J.-P. Tignol, \emph{The book of involutions}, American
Mathematical Society, Providence, RI, 1998, With a preface in French by J. Tits.

\bibitem{LM}
D.~Laackman and A.~Merkurjev, \emph{Degree three cohomological invariants of reductive
	groups}, Comment. Math. Helv. \textbf{91} (2016), no.~3, 493--518.
	
\bibitem{Mer17}
A.~S.~Merkurjev, \emph{Unramified degree three invariants for reductive groups of type $A$}, J. Algebra \textbf{502} (2018), 49--60. 



\bibitem{Me160}
A.~Merkurjev, \emph{Degree three unramified cohomology of adjoint semisimple groups}, To appear in Math. Z.

\bibitem{Mer17prime}
A.~S.~Merkurjev, \emph{Invariants of algebraic groups and retract rationality of classifying spaces},
Algebraic groups: structure and actions, 277--294, Proc. Sympos. Pure Math., \textbf{94}, Amer.
Math. Soc., Providence, RI, 2017.

\bibitem{Mer161}
A.~Merkurjev, \emph{Cohomological invariants of central simple algebras}, Izvestia RAN. Ser.
Mat. \textbf{80} (2016), no.~5, 869--883.

\bibitem{Mer162}
A.~Merkurjev, \emph{Unramified degree three invariants of reductive groups}, Adv. Math. \textbf{293} (2016), 697--719.

\bibitem{Mer163}
A.~Merkurjev, \emph{Degree three cohomological invariants of semisimple groups}, J. Eur. Math. Soc. \textbf{18} (2016), 657--680.

\bibitem{Que}
A.~Qu\'eguiner, \emph{Cohomological invariants of algebras with involution}, J. Algebra, \textbf{194} (1997), 299--330.

\bibitem{Sal}
D.~J.~Saltman, Noether’s problem over an algebraically closed field, Invent. Math. \textbf{77} (1984) 71--84.

\bibitem{Sal1}
D.~J.~Saltman, \emph{Brauer groups of invariant fields, geometrically negligible
classes, an equivariant {C}how group, and unramified {$H^3$}}, {$K$}-theory and algebraic geometry: connections with
quadratic forms and division algebras ({S}anta {B}arbara,
{CA}, 1992), Amer. Math. Soc., Providence, RI, 1995, pp. 189--246. 

\bibitem{Sal2}
D.~J.~Saltman, \emph{{$H^3$} and generic matrices}, J. Algebra \textbf{195} (1997), no.~2, 387--422. 

\bibitem{Serre}
J.-P.~Serre, \emph{Galois Cohomology}, Springer-Verlag, Berlin, 1997.

\bibitem{Totaro}
B.~Totaro, \emph{The Chow ring of a classifying space}, Algebraic K-theory (Seattle, WA,
1997), Proc. Sympos. Pure Math., vol. 67, Amer. Math. Soc., Providence, RI, 1999,
pp. 249--281.

\end{thebibliography}
\end{document}